\DeclareFontShape{OT1}{cmr}{bx}{sc}{<-> cmbcsc10}{}
\newcommand\abs[1]{\lvert#1\rvert} 
\newcommand{\ass}{\quad\mbox{as}\quad}
\newcommand{\diff}{\,\mathrm{d}}
\newcommand{\inn}{{\quad\hbox{in } }}
\newcommand{\nn}{ {\nabla}  }
\newcommand{\pp}{ {\partial} }
\newcommand{\RR}{{{\mathbb R}}}
\newcommand{\R} {\mathbb R}
\newcommand{\Z} {\mathbb Z}
\newcommand{\cuad}{{\sqcap\kern-.68em\sqcup}}
\newcommand{\foral}{\quad\mbox{for all}\quad}
\newcommand{\be}{\begin{equation}}
\newcommand{\ee}{\end{equation}}
\newcommand{\la}{\lambda}
\newcommand{\equ}[1]{(\ref{#1})}
\newtheorem{lemma}{Lemma}[section]
\newtheorem{prop}{Proposition}[section]
\newtheorem{theorem}{Theorem}
\newtheorem{remark}{Remark}[section]
\newcommand{\bremark}{\begin{remark} \em}
	\newcommand{\eremark}{\end{remark} }
\numberwithin{equation}{section}
\begin{document}
\title[Blow-up for critical heat equation]{Type II Finite time blow-up for the three dimensional energy critical heat equation}

\author[M. del Pino]{Manuel del Pino}
\address{\noindent
Department of Mathematical Sciences,
University of Bath, Bath BA2 7AY, United Kingdom}
\email{m.delpino@bath.ac.uk}

\author[M. Musso]{Monica Musso}
\address{\noindent
Department of Mathematical Sciences,
University of Bath, Bath BA2 7AY, United Kingdom}
\email{m.musso@bath.ac.uk}

\author[J. Wei]{Juncheng Wei}
\address{\noindent
Department of Mathematics,
University of British Columbia, Vancouver, B.C., V6T 1Z2, Canada}
\email{jcwei@math.ubc.ca}

\author[Q. Zhang]{Qidi Zhang}
\address{\noindent
Department of Mathematics,
University of British Columbia, Vancouver, B.C., V6T 1Z2, Canada}
\email{qidi@math.ubc.ca}

\author[Y. Zhou]{Yifu Zhou}
\address{\noindent
Department of Mathematics,
University of British Columbia, Vancouver, B.C., V6T 1Z2, Canada}
\email{yfzhou@math.ubc.ca}

\begin{abstract}
We consider the following Cauchy problem for three dimensional energy critical heat equation
\begin{equation*}
\begin{cases}
u_t=\Delta u+u^{5},~&\mbox{ in } \R^3 \times (0,T),\\
u(x,0)=u_0(x),~&\mbox{ in } \R^3.
\end{cases}
\end{equation*}
 We construct type II finite time blow-up solution $u(x,t)$ with the blow-up rates $ \| u\|_{L^\infty} \sim (T-t)^{-k}$, where $ k=1,2,... $. This gives a rigorous proof of the  formal computations by Filippas, Herrero and Vel´azquez \cite{fhv}. This is the first instance of type II finite time blow-up for three dimensional energy critical heat equation.

\end{abstract}
\maketitle


\section{Introduction}
The study of blow-up phenomena for the Fujita type nonlinear heat equation
\be\left\{
\begin{aligned}
u_t   & = \Delta u+ |u|^{p-1}u   \inn\ \R^n \times (0, T),  \\
u(\cdot,0) &  =u_0 \inn \R^n
\end{aligned}\right.
\label{F}\ee
is a classical topic with important applications in mathematical modelling and geometry.  Here $p>1$ and $ n\geq 1$.  A smooth solution of \equ{F} {\em blows-up at time $T$} if
$$ \lim_{t\to T} \| u(\cdot, t)\|_{L^\infty (\R^n) }    = + \infty . $$
There are two types of blow-ups:  the blow-up of a solution $u(x,t)$ is of type I if it happens at most at the ODE rate:
$$
\limsup_{t\to T} (T-t)^{\frac 1{p-1}} \| u(\cdot, t)\|_{L^\infty (\R^n) }    <  + \infty
$$
while the blow-up is said of type II if
$$
\limsup_{t\to T} (T-t)^{\frac 1{p-1}} \| u(\cdot, t)\|_{L^\infty (\R^n) }    =  + \infty.
$$

 It is known after a series of works, including \cite{gk1,gk2,gk3}, that type I is the only way possible if $p<p_S$ where $p_S$ is the critical Sobolev exponent,
$$p_S :=\begin{cases} \frac{n+2}{n-2}  & \hbox{ if } n\ge 3 \\ +\infty  &\hbox{ if } n=1,2.    \end{cases}
$$
Stability and genericity of type I blow-up have been considered for instance in \cite{collot2,mz,mm2, PY1, PY2}.
Solutions with type II blow-up are in fact much harder to detect. The first example was discovered in \cite{hv,hv1}, for $p>p_{JL}$ where    $p_{JL}$ is the Joseph-Lundgreen exponent,
$$p_{JL}=\begin{cases} 1+{4\over n-4-2\,\sqrt{n-1}} & \text{if $n\ge11$}\\ + \infty, & \text{if $ n\le 10$}.\\  \end{cases} $$
See the book \cite{qs} for a survey of related results.
In fact, no type II blow-up is present for radial solutions if $p_S< p< p_{JL}$, while for radial positive solutions this is not possible
if $p= \frac{n+2}{n-2}$ \cite{fhv}. Examples of nonradial positive blow-up solutions for $p> p_{JL}$  have been found in
\cite{collot4,collot1}. Various different scenarios have been discovered or discarded in the supercritical case. See for instance \cite{collot4,collot1,dmw2,hv,hv1,mm1, mm2, MS} and the book \cite{qs}.

In \cite{fhv} Filippas, Herrero and Velazquez formally obtained sign-changing solution with type II blow-up for $p=p_S$ in \cite{fhv} in lower dimensions $ n=3, 4, 5, 6$. The blowup of radial  blowup solutions found in \cite{fhv}  are given by
\begin{equation}
\label{FHVformal}
\| u \|_{L^\infty (\R^n)} \sim \left\{\begin{array}{l}
(T-t)^{-k},  \  n=3,\\
(T-t)^{-k} | \log (T-t)|^{\frac{2k}{k-1}},  \ n=4,\\
(T-t)^{-k},  \  n=5,\\
(T-t)^{-\frac{1}{4}} |\log (T-t)|^{-\frac{15}{8}}, \  n=6
\end{array}
\right.
\end{equation}
and this is corrected  to 
\begin{equation}
\label{Haformal}
\| u \|_{L^\infty (\R^n)} \sim \left\{\begin{array}{l}
(T-t)^{-k},  \  n=3,\\
(T-t)^{-k} | \log (T-t)|^{\frac{2k}{k-1}},  \ n=4,\\
(T-t)^{-3k},  \  n=5,\\
(T-t)^{-\frac{5}{2}} |\log (T-t)|^{-\frac{15}{4}}, \  n=6
\end{array}
\right.
\end{equation}
by Harada \cite{Ha1}. 

\medskip

  The first  rigorous proof of  radial example  was constructed by Schweyer in  \cite{schweyer} for $n=4$ and $ k=1$.  (See \cite{dmwz} for multiple blow-ups and nonradial case in $n=4, k=1$.) There is a deep connection between dimension four energy critical and two-dimensional harmonic map flows (\cite{ddw}\cite{RS1}).
Slower blowup rates for harmonic map flows are established by Raphael and Schweyer \cite{RS2}, which suggests that slower blowup rates are also possible for energy critical heat equation in the case $ n=4, k\geq 2$.

\medskip

In  dimension $n=5$ and  critical case      $p=p_S = \frac 73 $, the first three authors gave a rigorous construction of type II blow-up in radial and nonradial cases (confirming the case of $ k=1, n=5$).  Higher speed blow-up solutions for $n=5, p= p_S=\frac 73 , k\geq 2$ are carried out recently by Harada \cite{Ha1}. 

\medskip

Very recently Harada succeeded in establishing the type II blow-up in dimension $n=6, p=p_S=2$ \cite{Ha2}.

\medskip

  The purpose of this paper is to fill the gap in the last remaining dimension $n=3, p=p_S=5$.

\begin{theorem} \label{teo1} Let $n=3$ and $ k\in \Z_+$.  For each $T>0$ sufficiently small there exists
an initial condition $u_0$ such that the solution of Problem \equ{F} blows up at time $T$ which looks like at main order
\begin{equation*}
\begin{aligned}
u(x,t)=&~\eta\left(\frac{x}{\sqrt{T-t}}\right)\left[\mu^{-\frac{1}{2}}(t)w \left(\frac{x}{\mu(t)}\right)
+ 2\mu'(t)\,\mu^{\frac 1 2}(t) J\left(\frac{x}{\mu(t)}\right)\right] \\
&~+ \left(1-\eta\left(\frac{x}{\sqrt{T-t}}\right)\right)\eta\left(\frac{x}{\sqrt{T-t}}\right) (T-t)^k\frac{1}{|x|}C_k H_{2k}\left(\frac{|x|}{2\sqrt{T-t}}\right) + \theta(x,t)
\end{aligned}
\end{equation*}
where
\begin{equation*}
w(y) = 3^{\frac 14}(1+|y|^2)^{-\frac{1}{2}}, \quad C_k=\frac{(-1)^kk!\sqrt{3}}{(2k)!},
\end{equation*}
$H_{2k}$ is the Hermite polynomial defined in \eqref{def-Hermite} and $J$ is defined in \eqref{def-J}.
Moreover, the blow-up rate $\mu(t)$ satisfies
$$
\mu(t)  \sim \mu_0(t)= 3^{\frac 1 2} A (T-t)^{2k}, \quad k\in\Z_+
$$
and
 $\|\theta \|_{L^\infty} \le  T^a $ for some $a>0$.
\end{theorem}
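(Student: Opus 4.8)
The plan is to use the inner--outer parabolic gluing method, in the spirit of the constructions in higher dimensions, adapted to the borderline decay of the ground state in dimension three. I would look for a radial solution of the form $u = U_* + \phi$, where the approximate profile $U_*$ is exactly the sum of the first two bracketed groups in the statement: a modulated Aubin--Talenti bubble $W_{\mu} := \mu^{-1/2} w(\cdot/\mu)$ carrying the blow-up, corrected by $2\mu'\mu^{1/2}J(\cdot/\mu)$ so that the leading part of $\partial_t W_{\mu} = -\mu'\mu^{-3/2} Z_0(\cdot/\mu)$ is cancelled, where $Z_0 := \tfrac12 w + y\cdot\nabla w$ is the dilation generator and $J$ solves $L_0[J] = c\,Z_0$ with $L_0 := \Delta + 5 w^4$; this inner profile is glued, across the self-similar annulus $|x|\sim \sqrt{T-t}$, to the backward self-similar linear solution $(T-t)^k |x|^{-1} C_k H_{2k}(|x|/(2\sqrt{T-t}))$, which lies in the kernel of $-\partial_t + \Delta$ among radial self-similar functions with exactly the spatial decay $|x|^{-1}$ near the origin (here one uses that $H_{2k}$ solves the Hermite equation and $H_{2k}(0)\neq 0$). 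The matching is what fixes the rate: the bubble tail $W_{\mu}\sim \mu^{1/2}|x|^{-1}$ must agree with the value $\sim(T-t)^k H_{2k}(0)\,|x|^{-1}$ of the outer term as $|x|\to 0$, which forces $\mu^{1/2}\sim(T-t)^k$; the linear growth of $J$ at infinity — a consequence of $Z_0\sim|y|^{-1}$ and $\Delta^{-1}|y|^{-1}\sim|y|$ in $\mathbb{R}^3$ — then matches, consistently, the next term of the Hermite profile.

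I would then fix the precise form of $U_*$ as a functional of the modulation parameter $\mu(t)$ (and, if needed, a lower-order correction of the scaling), compute the error $S[U_*] := -\partial_t U_* + \Delta U_* + U_*^5$, and verify that with the above choices it is small in weighted $L^\infty$ spaces adapted to the inner scale $\mu$ and the outer scale $\sqrt{T-t}$. Next, I would introduce the gluing decomposition $\phi = \eta_R\,\phi_{\mathrm{in}}(x/\mu,\tau) + \phi_{\mathrm{out}}(x,t)$ with $\tau(t) = \int_0^t \mu^{-2}$, which turns $S[U_* + \phi] = 0$ into a coupled system consisting of an \emph{inner problem} $\mu^2\partial_\tau\phi_{\mathrm{in}} = L_0[\phi_{\mathrm{in}}] + h_{\mathrm{in}}[\phi_{\mathrm{out}},\mu]$, posed globally in $y\in\mathbb{R}^3$, and an \emph{outer problem} $\partial_t\phi_{\mathrm{out}} = \Delta\phi_{\mathrm{out}} + V\,\phi_{\mathrm{out}} + h_{\mathrm{out}}[\phi_{\mathrm{in}},\mu]$ with a small, spatially localized potential $V$.

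The two linear theories come next. For the outer problem the operator is a perturbation of the free heat operator, and solvability with prescribed space--time decay follows from Duhamel's formula and parabolic regularity, producing $\phi_{\mathrm{out}} = \phi_{\mathrm{out}}[\phi_{\mathrm{in}},\mu]$ as a small, Lipschitz map of its arguments. For the inner problem I would prove: for right-hand sides $h$ with suitable decay in $y$ satisfying the single orthogonality condition $\int_{B_{2R}} h(y,\tau)\,Z_0(y)\,dy = 0$ — a modification of the usual $L^2$-orthogonality, forced by the fact that $Z_0\notin L^2(\mathbb{R}^3)$ in the critical dimension — there is a solution $\phi_{\mathrm{in}}$ with controlled, at worst slowly growing, bounds. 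Imposing this orthogonality after substituting $\phi_{\mathrm{out}}$ and the error $S[U_*]$ produces the \emph{reduced equation} for $\mu$: an integro-differential equation whose leading balance is $\mu'(t)\,(\text{const}) = (T-t)^{k-1}(\text{const})$, i.e. $\mu_0(t) = 3^{1/2} A (T-t)^{2k}$; one then solves for $\mu$ near $\mu_0$ in a weighted $C^1$ topology.

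Finally I would close the scheme: for $\mu$ in a suitable neighbourhood of $\mu_0$, solve the outer problem, plug the result into the inner problem, solve it modulo the one-dimensional obstruction, and adjust $\mu$ so that the obstruction vanishes; a fixed-point argument (Schauder, or a contraction after sharpening the a priori bounds) yields $(\phi_{\mathrm{in}},\phi_{\mathrm{out}},\mu)$, and $\theta := u - U_*$ then satisfies $\|\theta\|_{L^\infty}\le T^a$ for some $a>0$. The main obstacle is the inner linear theory in the critical dimension $n=3$: since $w\sim|y|^{-1}$ is precisely the borderline decay, $Z_0$ fails to be in $L^2$, $J$ grows linearly, and the tail of the approximation interacts strongly with the self-similar Hermite profile, so the a priori estimates, the choice of weighted norms, and the matching of inner and outer expansions must all be carried out carefully enough that $S[U_*]$ is genuinely negligible — in particular, for the fast rates $k\ge 2$ this requires building $U_*$ to a suitably high order.
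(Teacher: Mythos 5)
Your overall architecture — bubble plus $J$-correction glued across $|x|\sim\sqrt{T-t}$ to the backward self-similar Hermite profile, rate fixed by matching $\mu^{1/2}|x|^{-1}$ against $(T-t)^k|x|^{-1}$, then an inner--outer gluing system with an orthogonality condition against $Z_0$ — is exactly the paper's. But there is a concrete gap at the heart of the scheme: you never explain how the slow-decaying part of the error is removed, and without that step the rest does not close. After the matching, $S(U_1)$ still contains a term of the form $\alpha(t)(\mu^2+|x|^2)^{-1/2}$ with $\alpha=-3^{1/4}\mu_0^{-1/2}(\mu_0\Lambda)'$. This term decays only like $|y|^{-1}$ in the inner variable, so it is not admissible for the inner linear theory (which requires decay $|y|^{-2-\sigma}$, $\sigma>0$), and it cannot simply be shifted to the outer problem either. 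Moreover, projecting it directly against $Z_0$ over $B_{2R}$ gives a \emph{local-in-time} relation for $\alpha(t)$, not the integro-differential equation you assert. The paper's essential device is to introduce a separate global correction $\Phi_1$ solving $\partial_t\Phi_1=\Delta\Phi_1+\chi\,\alpha(t)(\mu^2+|x|^2)^{-1/2}$ \emph{before} the gluing decomposition (so $u=U_1+\Phi_1+\psi+\eta_R\mu_0^{-1/2}\phi$, a three-piece rather than your two-piece splitting). It is the trace $\Phi_1(0,t)$, re-entering the orthogonality condition through $5w^4\Phi_1Z_0$, that produces the genuinely nonlocal reduced equation $\int_0^t\alpha(s)(t-s)^{-1/2}\,ds=h(t)$, inverted by Riemann--Liouville fractional calculus.

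A second, related omission concerns $k\ge2$. You suggest handling the fast rates by "building $U_*$ to higher order," but in the paper the ansatz $U_1$ has the same form for all $k$; what changes is the solution of the nonlocal equation. To get $\mu\sim\mu_0=3^{1/2}A(T-t)^{2k}$ with a remainder $\Lambda\to0$ one needs $\alpha\sim(T-t)^{k-1}\Lambda(t)$, i.e. the vanishing conditions $\alpha(T)=\alpha'(T)=\dots=\alpha^{(k-1)}(T)=0$. These $k$ conditions are met by adjoining to $\Phi_1$ a linear combination $\sum_{j=1}^k c_j\mathcal B^{(j)}$ of caloric functions with Gaussian initial data, whose traces $\mathcal B^{(j)}(0,t)$ have prescribed vanishing orders $(T-t)^j$ at $t=T$ after applying the half-order integral; the $c_j$ are then additional unknowns in the fixed point. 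Also note a small internal inconsistency in your reduced-equation heuristic: $\mu'\sim(T-t)^{k-1}$ would give $\mu\sim(T-t)^k$, not $(T-t)^{2k}$; the correct balance is at the level of $\mu^{-1/2}\mu'$ (equivalently $(\mu^{1/2})'\sim(T-t)^{k-1}$), consistent with your matching computation.
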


 \medskip

\medskip
The method of this paper is close in spirit to the analysis in the works
\cite{cdm,ddw,dmw,dmw1,dmw2}, where the {\em inner-outer gluing method} is employed. This method reduces the original problem to solving a basically uncoupled system, which depends in subtle ways on the parameter choices (which are governed by relatively simple ODE systems). The main obstacle in proving finite time blow-up in dimension three is the very slow decaying behavior of the kernel $Z_0 (y) \sim \frac{1}{|y|}$ which is not even in $L^2$. To overcome this difficulty we  follow the ideas in \cite{dmw} in which the first three authors constructed {\em infinite} time blow-up for \equ{F} with fast decaying initial condition. First we use the  matched asymptotics expansion of \cite{fhv}
 to construct a good inner and outer expansions. But this is not good enough as the error still carries slow decaying. As in \cite{ddw} and \cite{dmw}, we use a {\em global} term to correct the slow decaying error. This global term carries all the information needed to solve the scaling parameter. We then use the {\em inner-outer} gluing procedure to find a true solution. Interestingly the remainder of the scaling parameter solves a nonlocal ODE of the following form
 \begin{equation*}
 \int_t^T \frac{\alpha (s)}{ \sqrt{T-s}} ds = h (t), t<T, \alpha (T)=0
 \end{equation*}
 which is Caputo derivative of $\frac{1}{2}$ for $ \beta=\int_0^t \alpha (t)$. This is the {\em nonlocal feature } for three dimensional problem. As far as we know this seems to be the first construction of finite time blow-up for critical heat equation in $\R^3$.

\medskip

We remark that the phenomena of blow-up by bubbling (time dependent, energy invariant, asymptotically singular scalings of steady states) arises in various problems of parabolic, geometric and dispersive nature. This has been an intensively studied topic in the harmonic map flow \cite{ddw, LW},  critical heat equations \cite{fhv, cmr, dmw1, mm1, mm2, qs}, critical Schr\"{o}dinger maps \cite{MR, MRR}, critical wave maps and Yang-Mills \cite{KS1,RR}, and critical wave equations \cite{DK,DKM2, kenigmerle, KNS,KS2, KST}.

\medskip
The rest of this paper will be devoted to the proof of Theorem \ref{teo1}.


\medskip
	
\section{First approximation and matching}

\medskip

In this section, following the matched asymptotic expansions first developed in \cite{fhv}, we obtain an initial blow-up rate of finite blow-up solutions of the following nonlinear heat equation with critical exponent in $\RR^3$,
\begin{equation}\label{eq-u}
u_t=\Delta u+ u^5, \ \ u(x,0)=u_0(x),\ \ x\in\RR^3,\ \ t>0.
\end{equation}
where the initial value $u_0$ will be determined later. Throughout the paper, we shall use the symbol  $``\,\lesssim\,"$ to denote $``\,\leq\, C\,"$ for a positive constant $C$ independent of $t$ and $T$, where $C$ might be different from line to line.

\subsection{Approximate solutions}

\medskip

In the self-similar variable
\begin{equation*}
z=\frac{x}{\sqrt{T-t}},\quad \tau=-\log(T-t), \quad
 \Phi(z,\tau)=(T-t)^{\frac1{4}}u(x,t),
\end{equation*}
problem \eqref{eq-u} reads as
\begin{equation*}
\Phi_{\tau}=\Delta\Phi-\frac1{2} (z\cdot\nn\Phi)-\frac{\Phi}{4}+\Phi^5.
\end{equation*}
We next find good approximate solution for the above equation  in both inner and outer regimes.

\medskip

For the outer part, the first profile will be chosen as the solution to the linear problem
\begin{equation}\label{phiout}
\partial_{\tau}\Phi_{out} = \Delta\Phi_{out} - \frac1{2} (z\cdot\nn\Phi_{out})-\frac{\Phi_{out}}{4}.
\end{equation}
Writing $\Phi_{out}=e^{\gamma\tau}m(z)$, we look for radially symmetric solutions to the following ODE
\[
m''+(\frac{2}{z}-\frac{1}{2}z)m'-(\gamma + \frac{1}{4})m=0,
\]
which turns out to be an eigenvalue problem (\cite{fhv}). In order to get even solutions with polynomial growth rate in the outer regime, we take $\gamma=\frac{1}{4}-k,\ k\in \Z_+$,
and then
$$m(z)=A^{\frac{1}{2}}C_k\frac{1}{z}H_{2k}(\frac{z}{2}),$$
where $H_{2k}$ is Hermite polynomial
\begin{equation}\label{def-Hermite}
H_{2k}(\frac{z}{2})=\frac{(-1)^k(2k)!}{k!}(1-k\frac{z^2}{2}+\dots+a_kz^{2k}),\ C_k=\frac{(-1)^kk!\sqrt{3}}{(2k)!},
\end{equation}
$a_k$ is constant depending on $k$, and $A$ is any positive constant. Therefore, we get special solutions of $\equ{phiout}$
\[
\Phi_{out}(z,\tau)=A^{\frac{1}{2}}e^{(\frac{1}{4}-k)\tau}C_k\frac{1}{z}H_{2k}(\frac{z}{2}),
\]
which are sufficient for the matching to be carried out below, and we take the outer approximate solution of $\equ{eq-u}$ to be
\begin{equation}
\label{uout}
u_{out}(x,t)=(T-t)^{-\frac{1}{4}}\Phi_{out}(z,\tau)=A^{\frac{1}{2}}(T-t)^k\frac{1}{x}C_kH_{2k}(\frac{1}{2}\frac{x}{\sqrt{T-t}}).
\end{equation}
It can be directly  checked that in the original variable $ u_{out}$ satisfies
\[
\pp_t u_{out} = \Delta u_{out}.
\]

\medskip

For the inner part, we choose the approximate solution to be
\[
\Phi_{in}(z,\tau) = \epsilon^{-1/2} \big( w (y) + \sigma(\tau) J(y) \big),
\]
where
\[
w(y) = 3^{\frac 14}(1+|y|^2)^{-\frac{1}{2}},\
y = \frac{z}{\epsilon},\
\sigma(\tau) = 2\epsilon(\tau) \pp_{\tau} \epsilon(\tau) - \epsilon^2(\tau) ,
\]
and $\epsilon(\tau)$ is a positive function to be determined later. Here $J$ is the radial solution of
\begin{equation}\label{eq-J}
\Delta J +
5 w ^4 J + \frac 12 y \cdot\nn w + \frac{w}{4} = 0,
\ J(0)=0,\ J'(0)=0.
\end{equation}
Set $\mu(t) : =\sqrt{T-t} \epsilon(\tau)$ and $y=\frac{x}{\mu}$. Then one has
\begin{equation*}
\bar{u}_{in}(x,t) = (T-t)^{-\frac 14} \Phi_{in}(z,\tau)
= \mu^{-\frac{1}{2}}\big[w (\frac{x}{\mu}) + 2\mu \mu' J(\frac{x}{\mu})\big]
\end{equation*}
since
\[
\begin{split}
\sigma &= 2\epsilon \pp_{\tau} \epsilon-\epsilon^2
= 2 \epsilon \pp_t \epsilon (T-t)-\epsilon^2 \\
&= 2\mu (T-t)^{-\frac 12} \big[ \mu' (T-t)^{-\frac 12}
+ \mu \frac 1 2 (T-t)^{-\frac 32} \big](T-t) - \mu^2 (T-t)^{-1} \\
&= 2 \mu \mu' .
\end{split}
\]
Denote
\begin{equation}
\label{Z0}
Z_0(y) : = -\big[ y \cdot \nn w + \frac w 2 \big]
= \frac{3^{\frac 1 4}}{2} \frac{\abs{y}^2-1}{(1+\abs{y}^2)^{\frac 3 2}},
\end{equation}
which is a radial kernel of the homogeneous part of $\equ{eq-J}$. Then it is easy to see that

\begin{equation}\label{def-J}
J(y) = Z_0(y) \int_0^{y} \frac{1}{Z_0^2(s)} s^{-2}
 \int_0^s Z_0(u) u^2 \frac{Z_0(u)}{2} \diff u \diff s ,
\end{equation}
and thus
\[
J(y) \sim \frac{3^{\frac 1 4}}{8} y \ass y \to \infty .
\]

\medskip

\subsection{Matching inner and outer solutions}

In the region $\mu(t) \ll |x| \ll \sqrt{T-t}$, the inner and outer solutions have the following asymptotic behaviors respectively
\begin{equation}\label{matching-in}
\bar{u}_{in} \sim 3^{\frac 1 4} \mu^{\frac 1 2} |x|^{-1}
+\frac{3^{\frac 1 4}}{4} \mu^{-\frac 1 2} \mu' \, |x| ,
\end{equation}
\begin{equation}\label{matching-out}
u_{out}\sim A^{\frac{1}{2}}\sqrt{3}\frac{(T-t)^k}{|x|}-A^{\frac{1}{2}}\sqrt{3}\frac{k}{2}|x|(T-t)^{k-1}.
\end{equation}
Matching the inner and outer solutions, we get
\begin{equation*}
\mu \sim 3^{\frac 1 2} A (T-t)^{2k}, \qquad k \in \Z_+
\end{equation*}
so it is natural to choose
\[\mu_0 :=3^{\frac 1 2} A (T-t)^{2k} \]
as the leading order of the scaling parameter $\mu(t)$.

\medskip

We next choose
\begin{equation}\label{uin}
      u_{in}(x,t)
	= \mu^{-\frac{1}{2}}w (\frac{x}{\mu})
	 + 2\mu_0'\,\mu^{\frac 1 2} J(\frac{x}{\mu})
\end{equation}
and take the first approximate solution as follows
\begin{equation}
\label{U1}
U_1(x,t) := \eta\big(\frac{|x|}{r\sqrt{T-t}}\big) u_{in}
+ \big(1-\eta(\frac{|x|}{r_1 (T-t)^{\zeta_1}   })\big)
\eta(\frac{|x|}{r_2 (T-t)^{\zeta_2}  })u_{out},
\end{equation}
where $\eta$ is the smooth cut-off function satisfying $\eta(t)=1$ for $ t\in [0,1]$ and $\eta(t) = 0$ for $ t\in [2,\infty)$, and $r,~r_1,~r_2(>0)$ are sufficiently small constants to be determined later. For simplicity, we write
\[
\eta_1(x,t) := \eta\big(\frac{|x|}{r\sqrt{T-t}}\big),
\]

\[
\eta_{o1} (x,t) = \eta(\frac{|x|}{r_1 (T-t)^{\zeta_1}}),
\]

\[
\eta_{o2} (x,t) = \eta(\frac{|x|}{r_2 (T-t)^{\zeta_2}}).
\]

\medskip

\subsection{Error of the  first approximation}

We define the error function
$$
S(u):= - \pp_t u + \Delta_x u + u^5,
$$
and compute
\[
\begin{split}
S(U_1)
=& -\pp_t U_1 + \Delta_x U_1 + U_1^5 \\
=& -\pp_t u_{in} \eta_1 -u_{in}\pp_t \eta_1
+\Delta_x u_{in} \eta_1 + 2 \nn_x u_{in}\cdot \nn_x \eta_1 + u_{in}\Delta_x \eta_1 \\
& -\pp_t u_{out} (1-\eta_{o1} )\eta_{o2}
- u_{out} \pp_t ((1-\eta_{o1})\eta_{o2} ) \\
& +(1-\eta_{o1} ) \eta_{o2} \Delta_x u_{out}
+ 2 \nn_x ((1-\eta_{o1})\eta_{o2} ) \cdot \nn_x u_{out}
+u_{out}\Delta_x ((1-\eta_{o1})\eta_{o2}) \\
& + \big[u_{in}\eta_1 + (1 - \eta_{o1})\eta_{o2} u_{out}\big]^5 \\
=& \ \eta_1 ( - \pp_t u_{in} + \Delta_x u_{in} + u_{in}^5)
+ (1-\eta_{o1} )\eta_{o2}
( - \pp_t u_{out} + \Delta_x u_{out} + u_{out}^5) \\
& - \pp_t \eta_1 u_{in}
+ \Delta_x \eta_1 u_{in}
+ 2 \nn_x \eta_1 \nn_x u_{in}  \\
& -\pp_t [(1-\eta_{o1} )\eta_{o2}] u_{out}
+ \Delta_x [(1-\eta_{o1} )\eta_{o2}] u_{out}
+ 2 \nabla_x [(1-\eta_{o1} )\eta_{o2}] \nabla_x u_{out}
\\
& + \big[ \eta_1 u_{in} + (1-\eta_{o1} )\eta_{o2} u_{out}\big]^5
- \eta_1 u_{in}^5 - (1-\eta_{o1} )\eta_{o2} u_{out}^5 .
\end{split}
\]

Let
$$S_{in} := - \pp_t u_{in} + \Delta_x u_{in} + u_{in}^5,$$
$$S_{out} := - \pp_t u_{out} + \Delta_x u_{out} + u_{out}^5,$$ where we compute
\[
\begin{split}
\pp_t u_{in} =& -\mu^{-\frac 3 2} \mu' \big(\frac{w}{2}+\nn_y
w \cdot y\big)
+ 2 \mu_0'' \mu^{\frac 1 2} J(\frac{x}{\mu}) \\
&+ \mu_0' \mu^{-\frac 1 2} \mu' J(\frac{x}{\mu})
 -2 \mu_0' \mu^{-\frac 1 2} \mu' \nn_y J(\frac{x}{\mu})\cdot \frac{x}{\mu},
\end{split}
\]

\[
\pp_{\mu} \big( \mu^{\frac 1 2} J(\frac x{\mu}) \big)
= \mu^{-\frac 1 2} \big[ \frac 1 2 J(\frac x{\mu})
- \nn_y J(\frac x{\mu}) \cdot \frac x{\mu}
\big].
\]
Therefore, we have
\[
\begin{split}
S_{in}
=\ & \mu^{-\frac 3 2} \mu' \big(\frac{w}{2}
 + \nn_y w \cdot \frac{x}{\mu} \big)
 - 2 \mu_0'' \mu^{\frac 1 2} J(\frac{x}{\mu}) \\
&- \mu_0' \mu^{-\frac 1 2} \mu' J(\frac{x}{\mu})
 +2\mu_0' \mu^{-\frac 1 2} \mu' \nn_y J(\frac{x}{\mu})\cdot \frac{x}{\mu} \\
&+ \mu^{-\frac 5 2} \Delta_y w
 + 2 \mu_0' \mu^{-\frac 3 2} \Delta_y J(\frac{x}{\mu}) \\
&+\big[\mu^{-\frac 1 2}w(\frac{x}{\mu})
 + 2\mu_0'\,\mu^{\frac 1 2} J(\frac{x}{\mu})]^5 \\
=\ & \mu^{-\frac 3 2} (\mu-\mu_0)' \big(\frac{w}{2}
 + \nn_y w \cdot \frac{x}{\mu}\big)
 - 2 \mu_0'' \mu^{\frac 1 2} J(\frac{x}{\mu}) \\
&- \mu_0' \mu^{-\frac 1 2} \mu' J(\frac{x}{\mu})
 +2\mu_0' \mu^{-\frac 1 2} \mu' \nn_y J(\frac{x}{\mu})\cdot \frac{x}{\mu} \\
&+\big[\mu^{-\frac 1 2}w(\frac{x}{\mu})
 + 2\mu_0'\,\mu^{\frac 1 2} J(\frac{x}{\mu})]^5
 - \big( \mu^{-\frac 1 2} w(\frac{x}{\mu}) \big)^5
 -5 \big( \mu^{-\frac 1 2} w(\frac{x}{\mu}) \big)^4 2\mu_0'\,\mu^{\frac 1 2} J(\frac{x}{\mu}),
\end{split}
\]
where the first term can be written as
\[
\begin{split}
  \mu^{-\frac 3 2} (\mu-\mu_0)' \big(\frac{w}{2}
 + \nn_y w \cdot \frac{x}{\mu}\big) = \mu^{-\frac 3 2} (\mu-\mu_0)'
  \frac{3^{\frac 1 4}}{2} \frac{1 - \abs{y}^2}{(1+\abs{y}^2)^{\frac 3 2}}.
\end{split}
\]
We notice that
\[
(\mu' - \mu_0')\mu^{- \frac 1 2}
= 2(\mu^{\frac 1 2} - \mu_0^{\frac 1 2})'
 + (\mu^{\frac 1 2} - \mu_0^{\frac 1 2}) \mu_0^{-1} \mu_0'
 - (\mu^{\frac 1 2} - \mu_0^{\frac 1 2})^2 \mu^{- \frac 1 2}
  \mu_0^{-1} \mu_0',
\]
and
\[
\begin{split}
& \frac{3^{\frac 1 4}}{2} \frac{1 - \abs{y}^2}{(1+\abs{y}^2)^{\frac 3 2}}
= \frac{3^{\frac 1 4}}{2} \mu \frac{\mu^2 - |x|^2}{( \mu^2 + |x|^2)^{\frac 3 2}}
= 3^{\frac 1 4}
\frac{ \mu^3}{(\mu^2 + |x|^2)^{\frac 3 2}}
- \frac{ 3^{\frac 1 4}}{2} \frac {\mu}{(\mu^2 + |x|^2 )^{\frac 1 2}}.
\end{split}
\]
So the leading term is
\[
\begin{split}
& \eta_1 \big[ 2(\mu^{\frac 1 2} - \mu_0^{\frac 1 2})'
+ (\mu^{\frac 1 2} - \mu_0^{\frac 1 2}) \mu_0^{-1} \mu_0' \big]
\mu^{-1} ( - \frac{3^{\frac 1 4}}{2} )
\frac{\mu}{ (\mu^2 + |x|^2 )^{\frac 1 2} } \\
=\ & \eta_1 \big[ 2(\mu_0^{\frac 1 2} \Lambda(t) )'
+ \mu_0^{\frac 1 2} \Lambda(t) \mu_0^{-1} \mu_0' \big]
( - \frac{3^{\frac 1 4}}{2} ) \frac 1 { (\mu^2 + |x|^2 )^{\frac 1 2} } \\
=\ & \eta_1 2 \mu_0^{-\frac 1 2} \big[ \mu_0^{\frac 1 2}(\mu_0^{\frac 1 2}    \Lambda(t))'
+ (\mu_0^{\frac 1 2 } \Lambda(t))(\mu_0^{\frac 1 2})'  \big]
( - \frac{3^{\frac 1 4}}{2} ) \frac 1 { (\mu^2 + |x|^2 )^{\frac 1 2} } \\
=\ & \eta_1 \frac {\alpha(t)} { (\mu^2 + |x|^2 )^{\frac 1 2} }.
\end{split}
\]
Here we define
\[
\begin{split}
\mu(t) :=\ & \mu_0(t) (1+\Lambda(t))^2,\\
\alpha(t) :=\ & (-3^{\frac 1 4}) \mu_0^{-\frac 1 2}(t) (\mu_0(t) \Lambda(t))'.
\end{split}
\]

For the last term in $S_{in}$, we compare the size of
$\mu^{-\frac 1 2}w(\frac{x}{\mu}) $ and
$ 2\mu_0'\,\mu^{\frac 1 2} J(\frac{x}{\mu})$ in the regime $\frac{|x|}{r\sqrt{T-t}} \le 2$ thanks to the cut-off $\eta_1$: if $\frac{|x|}{\mu} \ll 1$, we have
\[
\mu^{-\frac 1 2} \gg \mu_0' \mu^{\frac 1 2} \iff \mu^{-1} \gg  \mu_0'
\iff 3^{-\frac 1 2} A^{-1} (T-t)^{-2k} \gg 3^{\frac 1 2} A 2k (T-t)^{2k-1}.
\]
Thus we get $\mu^{-\frac 1 2}w(\frac{x}{\mu}) \gg 2\mu_0'\,\mu^{\frac 1 2} J(\frac{x}{\mu})$. If $\frac{|x|}{\mu} \gg 1$, we only need to check
\[
\mu^{-\frac 1 2} \frac{\mu}{|x|} \gg \mu^{\frac 1 2} \mu_0' \frac{|x|}{\mu}
\iff \mu(\mu_0')^{-1} \gg |x|^2
\iff (T-t) \gg |x|^2
\]
since $\frac{|x|}{r\sqrt{T-t}} \le 2$ and $r\ll 1$. So $\mu^{-\frac 1 2}w(\frac{x}{\mu}) \gg 2 \mu_0'\,\mu^{\frac 1 2} J(\frac{x}{\mu})$ is also satisfied. Therefore, the last term in $S_{in}$ can be expanded as
\[
\begin{split}
  & \big[\mu^{-\frac 1 2}w(\frac{x}{\mu})
    + 2\mu_0'\,\mu^{\frac 1 2} J(\frac{x}{\mu})]^5
    - \big( \mu^{-\frac 1 2} w(\frac{x}{\mu}) \big)^5
    -5 \big( \mu^{-\frac 1 2} w(\frac{x}{\mu}) \big)^4
     2\mu_0'\,\mu^{\frac 1 2} J(\frac{x}{\mu}) \\
=\  & 5 \big[\mu^{-\frac 1 2}w(\frac{x}{\mu})
    + \theta_1 2\mu_0'\,\mu^{\frac 1 2} J(\frac{x}{\mu})]^4
      2\mu_0'\,\mu^{\frac 1 2} J(\frac{x}{\mu})
    -5 \big( \mu^{-\frac 1 2} w(\frac{x}{\mu}) \big)^4
      2\mu_0'\,\mu^{\frac 1 2} J(\frac{x}{\mu}) \\
=\  & 20 \big[\mu^{-\frac 1 2}w(\frac{x}{\mu})
      + \theta_2 \theta_1 2\mu_0'\,\mu^{\frac 1 2} J(\frac{x}{\mu})]^3
       \theta_1 \big[2\mu_0'\,\mu^{\frac 1 2} J(\frac{x}{\mu})\big]^2 \\
\lesssim\
    & \big[\mu^{-\frac 1 2}w(\frac{x}{\mu}) \big]^3\
      \big[2\mu_0'\,\mu^{\frac 1 2} J(\frac{x}{\mu})\big]^2,
\end{split}
\]
where $\theta_1,\theta_2\in[0,1]$ and \begin{equation}\label{def-hhh}
|J(y)| \le h(y)
:=
\begin{cases}
y^2  & \text{if} \ \ y \rightarrow 0;  \\
y    & \text{if} \ \ y \rightarrow \infty.
\end{cases}
\end{equation}
We define that $\chi(x) = 1 $ if $\abs{x} \le 1$ and $\chi(x) = 0 $ otherwise. Therefore, we have the following estimate
\begin{equation}\label{est-g4main}
\begin{aligned}
& \eta_1 S_{in} - \chi(\frac{x}{c_0(T-t)^{\frac12}})
\frac {\alpha(t)} { (\mu^2 + |x|^2 )^{\frac 1 2}  } \\
=\ & \mu^{-\frac 3 2} (\mu-\mu_0)' \big(\frac{w}{2}
+ \nn_y w \cdot \frac{x}{\mu}\big) \eta_1
- \chi(\frac{x}{c_0(T-t)^{\frac 1 2}})
\frac {\alpha(t)} { (\mu^2 + |x|^2 )^{\frac 1 2}  } \\
& - 2 \mu_0'' \mu^{\frac 1 2} J(\frac{x}{\mu}) \eta_1
  - \mu_0' \mu^{-\frac 1 2} \mu' J(\frac{x}{\mu}) \eta_1
  +2\mu_0' \mu^{-\frac 1 2} \mu' \nn_y J(\frac{x}{\mu})\cdot \frac{x}{\mu} \eta_1 \\
&+\left[ \big[\mu^{-\frac 1 2}w(\frac{x}{\mu})
+ 2\mu_0'\,\mu^{\frac 1 2} J(\frac{x}{\mu})]^5
- \big( \mu^{-\frac 1 2} w(\frac{x}{\mu}) \big)^5
-5 \big( \mu^{-\frac 1 2} w(\frac{x}{\mu}) \big)^4 2\mu_0'\,\mu^{\frac 1 2} J(\frac{x}{\mu}) \right] \eta_1 \\
=\ & -2 \alpha(t) \frac{ \mu^2 }{(\mu^2 + |x|^2)^{\frac 3 2}} \eta_1
+ \frac {\alpha(t) } {(\mu^2 + |x|^2 )^{\frac 1 2} }
(\eta_1 - \chi(\frac{x}{c_0(T-t)^{\frac 1 2} } ) )
\\
& - \Lambda^2 \mu^{-\frac 1 2 } \mu_0'
( 3^{\frac 1 4} \frac{ \mu^2 }{(\mu^2 + |x|^2)^{\frac 3 2}}
- \frac { 3^{\frac 1 4}} {2} \frac 1 {(\mu^2 + |x|^2 )^{\frac 1 2}} ) \eta_1
\\
& - 2 \mu_0'' \mu^{\frac 1 2} J(\frac{x}{\mu}) \eta_1
- \mu_0' \mu^{-\frac 1 2} \mu' J(\frac{x}{\mu}) \eta_1
+2\mu_0' \mu^{-\frac 1 2} \mu' \nn_y J(\frac{x}{\mu})\cdot \frac{x}{\mu} \eta_1 \\
& + \left[  \big[\mu^{-\frac 1 2}w(\frac{x}{\mu})
+ 2\mu_0'\,\mu^{\frac 1 2} J(\frac{x}{\mu})]^5
- \big( \mu^{-\frac 1 2} w(\frac{x}{\mu}) \big)^5
-5 \big( \mu^{-\frac 1 2} w(\frac{x}{\mu}) \big)^4 2\mu_0'\,\mu^{\frac 1 2} J(\frac{x}{\mu}) \right] \eta_1\\
\lesssim \
&|\alpha(t)| \frac{ \mu^2 }{(\mu^2 + |x|^2)^{\frac 3 2}} \eta_1
 + \frac { | \alpha(t) | } {(\mu^2 + |x|^2 )^{\frac 1 2} }
\chi (r\le \frac{ |x| }{\sqrt{T-t}} \le c_0)
 \\
 & + \Lambda^2 \mu^{-\frac 1 2 } \mu_0'
 \left| 3^{\frac 1 4} \frac{ \mu^2 }{(\mu^2 + |x|^2)^{\frac 3 2}}
 - \frac { 3^{\frac 1 4}} {2} \frac 1 {(\mu^2 + |x|^2 )^{\frac 1 2}} \right| \eta_1
 \\
 & + 2 \mu_0'' \mu^{\frac 1 2} h(\frac{x}{\mu}) \eta_1
 + \mu_0' \mu^{-\frac 1 2} \mu' h(\frac{x}{\mu}) \eta_1
 +2\mu_0' \mu^{-\frac 1 2} \mu' h(\frac{x}{\mu} ) \eta_1 \\
 & + \frac{\mu^{\frac 5 2}}{(\mu^2 + |x|^2)^{\frac 3 2}} (\mu_0')^2 h^2(\frac x{\mu_0}) \eta_1.
 \\
\end{aligned}
\end{equation}

\medskip

We decompose $u = U_1 + \Phi_1 + \Phi_2$ and compute
\[
\begin{split}
 & S(U_1 + \Phi_1 +\Phi_2) \\
=\ & -\pp_t U_1 -\pp_t \Phi_1 -\pp_t \Phi_2
     +\Delta_x U_1 + \Delta_x \Phi_1 + \Delta_x \Phi_2
     +U_1^5 + (U_1 + \Phi_1+\Phi_2)^5 - U_1^5 \\
=\ & S(U_1)  -\pp_t \Phi_1 -\pp_t \Phi_2 + \Delta_x \Phi_1 + \Delta_x \Phi_2
     + (U_1 + \Phi_1 + \Phi_2)^5 - U_1^5 \\
=\ & S(U_1) - \chi(\frac{x}{c_0(T-t)^{\frac12}})\frac {\alpha(t)} { (\mu^2 + |x|^2 )^{\frac 1 2} }
     +\chi(\frac{x}{c_0(T-t)^{\frac12}})
     \frac {\alpha(t)} { (\mu^2 + |x|^2 )^{\frac 1 2} }
     - \pp_t \Phi_1  \\
   & - \pp_t \Phi_2 + \Delta_x \Phi_1 + \Delta_x \Phi_2
     +(U_1 + \Phi_1 +\Phi_2 )^5 - U_1^5
\end{split}
\]
where $\chi(x) = 1 $ if $\abs{x} \le 1$ and $\chi(x) = 0 $ otherwise, $c_0$ is some constant, $\Phi_1$ and $\Phi_2$ are perturbations to be determined later.

\medskip

\subsection{Nonlocal correction: second approximation}

Observe that in the error there is a slow decaying term $ \frac {\alpha(t)} { (\mu^2(t) + \abs{x}^2)^{1/2} }$. Following the idea in \cite{dmw}, we now introduce a nonlocal correction $\Phi_1$ solving
\begin{equation}
	\pp_t \Phi_1 = \Delta_x \Phi_1
	     + \chi(\frac{x}{c_0(T-t)^{\frac 1 2}   }) \frac {\alpha(t)} {(\mu^2(t) + \abs{x}^2)^{1/2} }.
\end{equation}
Choosing the initial data, we get from Duhamel's formula
\begin{equation}
 \Phi_1(x,t) =
 \sum\limits_{j=1}^{k} c_j \mathcal B^{(j)}(x,t)
 + \int_0^t \int\limits_{\RR^3}
     (\frac 1{2\sqrt{\pi}})^3(t-s)^{-\frac 3 2}
     e^{-\frac{\abs{x-\xi}^2}{4(t-s)}}
     \chi(\frac{\xi}{c_0(T-s)^{\frac 1 2}  })\frac {\alpha(s)} {(\mu^2(s) + \abs{\xi}^2)^{1/2} } \diff \xi \diff s
\end{equation}
where $c_j$ are constants and $\mathcal B^{(j)}$ satisfy heat equation
\[
	\pp_t \mathcal B^{(j)} (x,t) = \Delta_x \mathcal B^{(j)} (x,t)
\]
which will be determined later when solving $\alpha(t)$ from the reduced equation in Section \ref{sec-redu}.

Then the new error becomes
\[
S(U_1 + \Phi_1) =
S(U_1) - \chi(\frac{x}{c_0(T-t)^{{\frac 1 2}} } )\frac {\alpha(t)} { (\mu^2 + |x|^2 )^{\frac 1 2}  }
+(U_1 + \Phi_1)^5 - U_1^5
\]

Let $ \Phi=\Phi_1+\Phi_2$. We have
\[
\begin{split}
& S(U_1 + \Phi_1 + \Phi_2) \\
=\ &
S(U_1) - \chi(\frac{x}{c_0(T-t)^{\frac 1 2} } )
\frac {\alpha(t)} { (\mu^2 + |x|^2 )^{\frac 1 2}  }
- \pp_t \Phi_2 + \Delta_x \Phi_2
+(U_1 + \Phi_1 +\Phi_2)^5 - U_1^5 \\
=\ & S(U_1) - \chi(\frac{x}{c_0(T-t)^{\frac 1 2} })
\frac {\alpha(t)} {(\mu^2 + |x|^2 )^{\frac 1 2}  }
+(U_1 + \Phi_1)^5 - U_1^5 \\
&- \pp_t \Phi_2 + \Delta_x \Phi_2
+(U_1 + \Phi_1 +\Phi_2)^5 - (U_1 + \Phi_1)^5 \\
=\ & S(U_1) - \chi(\frac{x}{c_0(T-t)^{\frac 1 2}  })
\frac {\alpha(t)} {(\mu^2 + |x|^2 )^{\frac 1 2}  }
+(U_1 + \Phi_1)^5 - U_1^5 \\
&- \pp_t \Phi_2 + \Delta_x \Phi_2 + 5(U_1 + \Phi_1)^4 \Phi_2
+(U_1 + \Phi_1 +\Phi_2)^5 - (U_1 + \Phi_1)^5 -5(U_1 + \Phi_1)^4 \Phi_2.
\end{split}
\]

\medskip


\section{Inner--outer gluing system}

\medskip

In this section, we set up the inner--outer gluing scheme for the nonlinear problem. We look for perturbation of the following form
\[
\begin{split}
\Phi_2(x,t)
=\ & \psi(x,t) + \phi^{in}(x,t)
   = \psi(x,t) + \eta(\frac{x}{R\mu_0}) \hat{\phi}(x,t) \\
=\ & \psi(x,t) + \eta(\frac x {R\mu_0})
     \mu_0^{-\frac 1 2}\phi(\frac x {\mu_0},t) \\
=\ & \psi(x,t) + \eta_R \mu_0^{-\frac 1 2}\phi(\frac x {\mu_0},t)
\end{split}
\]
where $\hat{\phi}(x,t) := \mu_0^{-\frac 1 2}\phi(\frac x {\mu_0},t)$,
$\eta_R := \eta(\frac x {R\mu_0})$, and $R(t)=\mu_0^{-\beta}(t)$ with $\beta\in(0,1/2)$. We next compute
\[
\pp_t \Phi_2(x,t) = \pp_t \psi(x,t)
  + \pp_t \eta_R \mu_0^{-\frac 1 2}\phi(\frac x {\mu_0},t)
  + \eta_R \pp_t (\mu_0^{-\frac 1 2}\phi(\frac x {\mu_0},t))
\]
where
\[
\begin{split}
 \pp_t (\mu_0^{-\frac 1 2}\phi(\frac x {\mu_0},t))
=\ & -\frac 1 2 \mu_0^{-\frac 3 2} \mu_0' \phi(\frac x {\mu_0},t)
  -\mu_0^{-\frac 3 2} \nn_y \phi(\frac{x}{\mu_0}, t) \cdot \frac{x}{\mu_0} \mu_0'
  + \mu_0^{- \frac 1 2} \pp_t \phi(\frac{x}{\mu_0}, t) \\
=\ & -\mu_0^{-\frac 3 2}\mu_0'
     \big[ \frac 1 2 \phi(\frac{x}{\mu_0}, t) + \nn_y \phi(\frac{x}{\mu_0}, t) \cdot \frac{x}{\mu_0}  \big]
     + \mu_0^{- \frac 1 2} \pp_t \phi(\frac{x}{\mu_0}, t)
\end{split}
\]
and
\[
\Delta_x \Phi_2(x,t) = \Delta_x \psi(x,t)
   + \Delta_x \eta_R \mu_0^{-\frac 1 2}\phi(\frac x {\mu_0},t)
   + 2\nn_x \eta_R \mu_0^{-\frac 3 2} \nn_y \phi(\frac x{\mu_0},t)
   + \eta_R \mu_0^{-\frac 5 2} \Delta_y \phi(\frac x{\mu_0}, t).
\]
Therefore, we obtain
\[
\begin{split}
   & S(U_1 + \Phi_1 + \Phi_2) \\
=\ & S(U_1) - \chi(\frac{x}{c_0(T-t)^{\frac 1 2}})
\frac {\alpha(t)} { (\mu^2 + |x|^2 )^{\frac 1 2}  }
     + (U_1 + \Phi_1)^5 - U_1^5 \\ 
   & - \pp_t \psi(x,t)
     - \pp_t \eta_R \mu_0^{-\frac 1 2}\phi(\frac x {\mu_0},t) \\
   & + \eta_R \mu_0^{-\frac 3 2}\mu_0'
       \big[ \frac 1 2 \phi(\frac{x}{\mu_0}, t)
     + \nn_y \phi(\frac{x}{\mu_0}, t) \cdot \frac{x}{\mu_0}  \big]
     - \eta_R \mu_0^{- \frac 1 2} \pp_t \phi(\frac{x}{\mu_0}, t) \\
   & + \Delta_x \psi(x,t)
     + \Delta_x \eta_R \mu_0^{-\frac 1 2}\phi(\frac x {\mu_0},t)
     + 2\nn_x \eta_R \cdot \mu_0^{-\frac 3 2} \nn_y \phi(\frac x{\mu_0},t)
     + \eta_R \mu_0^{-\frac 5 2} \Delta_y \phi(\frac x{\mu_0}, t) \\
   & + 5(U_1 + \Phi_1)^4
       \big(\psi(x,t) + \eta_R \mu_0^{-\frac 1 2}\phi(\frac x {\mu_0},t) \big) \\
   & + (U_1 + \Phi_1 +\Phi_2)^5 - (U_1 + \Phi_1)^5 -5(U_1 + \Phi_1)^4 \Phi_2.
\end{split}
\]
It can be directly  checked that $S(U_1 + \Phi_1 + \Phi_2)=0$ if $\psi$ solves
\begin{equation}\label{eqn-outer'''}
\begin{split}
& - \pp_t \psi(x,t)
+ \Delta_x \psi(x,t)  - \pp_t \eta_R \mu_0^{-\frac 1 2}\phi(\frac x {\mu_0},t)   + \Delta_x \eta_R \mu_0^{-\frac 1 2}\phi(\frac x {\mu_0},t)  + 2\nn_x \eta_R \cdot \mu_0^{-\frac 3 2} \nn_y \phi(\frac x{\mu_0},t) \\
& + 5(U_1 + \Phi_1)^4 (1-\eta_R)\psi(x,t)  + 5\big[ (U_1 + \Phi_1)^4
  - \big( \mu^{-\frac 1 2} w(\frac x{\mu}) \big)^4 \big]
    \psi(x,t) \eta_R \\
& + (U_1 + \Phi_1 +\Phi_2)^5 - (U_1 + \Phi_1)^5 -5(U_1 + \Phi_1)^4 \Phi_2 \\
& + \left[S(U_1) - \chi(\frac{x}{c_0(T-t)^{\frac12}})
\frac {\alpha(t)} { (\mu^2 + |x|^2 )^{\frac 1 2}  }\right](1-\eta_R)
  + \big[ (U_1 + \Phi_1)^5 - U_1^5 \big](1-\eta_R) = 0
\end{split}
\end{equation}
and $\phi$ solves
\begin{equation}\label{eqn-inner'''}
\begin{split}
& \eta_R \mu_0^{-\frac 3 2}\mu_0'
  \big[ \frac 1 2 \phi(\frac{x}{\mu_0}, t) + \nn_y \phi(\frac{x}{\mu_0}, t) \cdot \frac{x}{\mu_0}  \big]
  - \eta_R \mu_0^{- \frac 1 2} \pp_t \phi(\frac{x}{\mu_0}, t) \\
& + \eta_R \mu_0^{-\frac 5 2} \Delta_y \phi(\frac x{\mu_0}, t)  + 5\big( \mu^{-\frac 1 2} w(\frac x{\mu}) \big)^4 \psi(x,t) \eta_R + 5(U_1 + \Phi_1)^4 \mu_0^{-\frac 1 2} \phi(\frac x{\mu_0},t) \eta_R \\
& +\eta_R\left[S(U_1) - \chi(\frac{x}{c_0(T-t)^{\frac12}})
\frac {\alpha(t)} { (\mu^2 + |x|^2 )^{\frac 1 2}  }\right]+ \big[ (U_1 + \Phi_1)^5 - U_1^5 \big]\eta_R = 0.
\end{split}
\end{equation}
Recall that in the inner regime one has
\[
U_1 = u_{in} = \mu^{-\frac{1}{2}}w (\frac{x}{\mu})
+ 2\mu_0'\,\mu^{\frac 1 2} J(\frac{x}{\mu}).
\]
Writing $y = \frac x{\mu_0}$, we rearrange problems \eqref{eqn-outer'''} and \eqref{eqn-inner'''} and get
\begin{itemize}
\item The outer problem:
\begin{equation}\label{eqn-outer}
\begin{split}
\pp_t \psi(x,t)=\Delta_x \psi(x,t) +\mathcal G(\phi,\psi,\alpha)
\end{split}
\end{equation}
where
\begin{equation}\label{def-mathcalG}
\begin{aligned}
\mathcal G(\phi,\psi,\alpha)=& - \pp_t \eta_R \mu_0^{-\frac 1 2}\phi(\frac x {\mu_0},t)
  + \Delta_x \eta_R \mu_0^{-\frac 1 2}\phi(\frac x {\mu_0},t)
  + 2\nn_x \eta_R \cdot \mu_0^{-\frac 3 2} \nn_y \phi(\frac x{\mu_0},t) \\
& +\left(5(U_1 + \Phi_1)^4 (1-\eta_R) + 5\big[ (U_1 + \Phi_1)^4
        - \big( \mu^{-\frac 1 2} w(\frac x{\mu}) \big)^4 \big] \eta_R\right)\psi\\
& + (U_1 + \Phi_1 +\Phi_2)^5 - (U_1 + \Phi_1)^5 -5(U_1 + \Phi_1)^4 \Phi_2 \\
& + \left[S(U_1) - \chi(\frac{x}{c_0(T-t)^{\frac 1 2}  })
\frac {\alpha(t)} { (\mu^2 + |x|^2 )^{\frac 1 2}  }\right](1-\eta_R)
+ \big[ (U_1 + \Phi_1)^5 - U_1^5 \big](1-\eta_R)
\end{aligned}
\end{equation}

\item The inner problem:
\begin{equation}\label{eqn-inner}
\begin{split}
\mu_0^2 \pp_t \phi(y, t)
  =\Delta_y \phi(y, t)
  + 5 w^4(y) \phi(y,t) +\mathcal H(\phi,\psi,\alpha) \quad\mbox{ in }~B_{2R}\times (0,T),
\end{split}
\end{equation}
where
\begin{equation}\label{def-mathcalH}
\begin{aligned}
\mathcal H(\phi,\psi,\alpha)=
&~ 5\big[ (u_{in}(\mu_0 y, t) + \Phi_1(\mu_0 y, t))^4
  - \mu_0^{-2}w^4(y)\big] \mu_0^2 \phi(y,t) \\
&~ + 5 \mu_0^{\frac 1 2}(1 + \Lambda)^{-4} w^4(\frac y{(1+\Lambda)^2})
    \psi(\mu_0 y,t)
  + \mu_0 \mu_0'
    \big[ \frac 1 2 \phi(y, t) + \nn_y \phi(y,t) \cdot y \big]\\
&~ + \mu_0^{\frac52}\left[S(U_1) - \chi(\frac{\mu_0 y}{c_0(T-t)^{\frac12}})
\frac {\alpha(t)} { (\mu^2 + |\mu_0 y|^2 )^{\frac 1 2}  }\right]\\
&~+ \mu_0^{\frac 5 2}\big[ (u_{in}(\mu_0 y, t) + \Phi_1(\mu_0 y, t))^5
  - u_{in}^5(\mu_0 y, t) \big]
\end{aligned}
\end{equation}
\end{itemize}
We choose $\zeta_1=\zeta_2=\frac12$ so that $ \big(1-\eta(\frac{|x|}{r_1 (T-t)^{\zeta_1}   })\big)
\eta(\frac{|x|}{r_2 (T-t)^{\zeta_2}  })u_{out}\equiv 0$ in the inner region
$\{ x: |x|\le 2R\mu_0 \}$
\[
r_1 (T-t)^{\zeta_1}  \gg 2R\mu_0
\]
for $T\ll 1$ since $R(t)=\mu_0^{-\beta}$ with $\beta\in(0,1/2)$.

Performing the change of variables
\[
\frac{\diff \tau}{\diff t} = \mu_0^{-2},\quad y=\frac{x}{\mu_0}
\]
so that
$$
\tau = \frac 1{3A^2 (4k-1)}(T-t)^{1-4k},\quad \mu_0 = (3^{\frac 1 2} A)^{\frac {-1}{4k-1} } (4k-1)^{-\frac{2k}{4k-1} }
        \tau^{-\frac{2k}{4k-1} }
$$
and writing $\phi(y, \tau) = \phi(y, t(\tau))$, we obtain
\begin{equation*}
\begin{split}
& - \pp_{\tau} \phi(y, \tau)
  + \Delta_y \phi(y, \tau)
  + 5 w^4(y) \phi(y,\tau) \\
& + 5\big[ (u_{in}(\mu_0 y, t(\tau) ) + \Phi_1(\mu_0 y, t(\tau)) )^4
  - \mu_0^{-2}w^4(y)\big] \mu_0^2 \phi(y,\tau) \\
& + 5 \mu_0^{\frac 1 2}(1 + \Lambda)^{-4} w^4(\frac y{(1+\Lambda)^2})
    \psi(\mu_0 y,t(\tau) )
  + \mu_0 \mu_0'
    \big[ \frac 1 2 \phi(y, \tau) + \nn_y \phi(y,\tau) \cdot y \big]\\
&+ \mu_0^{\frac52}\left[S(U_1) - \chi(\frac{\mu_0 y}{c_0[3A^2(4k-1)\tau]^{\frac{1}{2(1-4k)}}})
\frac {\alpha(t(\tau))} { (\mu^2 + |\mu_0 y|^2 )^{\frac 1 2}  }\right]\\
& + \mu_0^{\frac 5 2}\big[ (u_{in}(\mu_0 y, t(\tau))
  + \Phi_1(\mu_0 y, t(\tau)))^5
  - u_{in}^5(\mu_0 y, t(\tau)) \big]  = 0  \quad\mbox{ in }~B_{2R}\times (\tau_0,+\infty).
\end{split}
\end{equation*}

In the sequel, we shall solve the inner--outer gluing system \eqref{eqn-outer} and \eqref{eqn-inner} by developing suitable linear theories and the fixed point argument.

\medskip


\section{Linear theories}\label{sec-lt}

\medskip

\subsection{Linear theory for the inner problem}\label{sec-inner}

\medskip

By using the Fourier decomposition and delicate analysis, the authors in \cite{dmw} developed the linear theory for the inner problem \eqref{eqn-inner} in dimension $3$. Denote $Z_-$ by the positive radial bounded eigenfunction associated to the only negative eigenvalue $\la_-$ to
$$
\Delta \phi + 5w^4 \phi + \la_- \phi=0,\quad \phi\in L^{\infty}(\R^3).
$$
It is further known that $\la_-$ is simple and $Z_-$ satisfies
$$
Z_-(y)\sim |y|^{-1} e^{-\sqrt{|\la_0|}|y|}~~\mbox{ as }~~|y|\to \infty.
$$
Consider an orthonormal basis $\{\Theta_i\}_{i=0}^{\infty}$ made up of spherical harmonics in $L^2(\mathbb{S}^{2})$, i.e.
$$\Delta_{\mathbb{S}^{2}}\Theta_i+\la_i \Theta_i=0 ~\mbox{ in } ~\mathbb{S}^{2}$$
with $0=\la_0<\la_1=\la_2=\la_3=2<\la_4\leq\cdots.$ More precisely, $\Theta_0(y)=a_0,~\Theta_i(y)=a_1y_i,~i=1,\cdots,3$ for two constants $a_0$, $a_1$.
For $h\in L^2(B_{2R})$, we decompose
\begin{equation*}
h(y,t)=\sum\limits_{j=0}^{\infty} h_j(r,t)\Theta_j(y/r),~r=|y|,~h_j(r,t)=\int_{\mathbb{S}^{2}} h(r\theta,t)\Theta_j(\theta)d\theta
\end{equation*}
and write $h=h^0+h^1+h^{\perp}$ with
$$h^0=h_0(r,t),~h^1=\sum\limits_{j=1}^3 h_j(r,t)\Theta_j,~h^{\perp}=\sum\limits_{j=4}^{\infty} h_j(r,t)\Theta_j.$$
Also, we decompose $\phi=\phi^0+\phi^1+\phi^{\perp}$ in a similar form. Define
\begin{equation}\label{def-normnua}
\|h\|_{\nu,2+\sigma}:=\sup_{(y,t)\in B_{2R}\times(0,T)} \mu_0^{-\nu}(t)(1+|y|^{2+\sigma})|h(y,t)|.
\end{equation}

\begin{prop}[\cite{dmw}]\label{propmode}
Let $\nu$, $\sigma$ be given positive numbers with $0<\sigma<2$. Then, for all sufficiently large $R>0$ and any $h = h(y,t)$ with $\|h\|_{\nu,2+\sigma} < +\infty$ that satisfies for all $j=0,1,\dots 3$
\[
\int\limits_{B_{2R}} h(y,t) Z_j(y) \diff y = 0 \ \ \text{for all} \ \
t \in (0 , T)
\]
there exist $\phi = \phi[h]$ and $e_0 = e_0[h]$ which solve
\begin{equation*}
	\mu_0^2\phi_{t} = \Delta \phi + 5w^4 \phi +h(y,t) \ \text{in} \ \
	B_{2R} \times (0, T),\ \  \phi(y,0) = e_0 Z_-(y)
	\ \ \text{in} \ \ B_{2R}.
\end{equation*}
Moreover, they define linear operators of h that satisfies the estimates
\begin{equation*}
	|\phi(y,t)| \lesssim \mu_0^{\nu}(t)
	\left[ \frac{R^{4-\sigma}}{1+|y|^3 } \|h^0\|_{\nu, 2+\sigma}
	+ \frac{R^{4-\sigma}}{1+|y|^4 } \|h^1\|_{\nu, 2+\sigma}
	+\frac{\|h\|_{\nu, 2+\sigma}}{1+|y|^{\sigma}}
	\right]
\end{equation*}

\begin{equation*}
|\nabla_y\phi(y,t)| \lesssim \mu_0^{\nu}(t)
\left[ \frac{R^{4-\sigma}}{1+|y|^4 } \|h^0\|_{\nu, 2+\sigma}
+ \frac{R^{4-\sigma}}{1+|y|^5 } \|h^1\|_{\nu, 2+\sigma}
+\frac{\|h\|_{\nu, 2+\sigma}}{1+|y|^{\sigma+1}}
\right]
\end{equation*}
and
\begin{equation*}
	|e_0[h]| \lesssim \|h\|_{\nu, 2+\sigma}.
\end{equation*}
\end{prop}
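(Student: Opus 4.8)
The plan is to follow the strategy of \cite{dmw}: reduce to the spherical-harmonic sectors, control each sector by explicit barriers (using the orthogonality conditions to kill the dangerous projections), and then reassemble. Since the potential $5w^4(y)$ is radial, the operator $L_0 := \Delta + 5w^4$ preserves the splitting $\phi = \phi^0 + \phi^1 + \phi^\perp$, so the problem decouples into the scalar parabolic equations
\[
\mu_0^2 \partial_t \phi_j = \partial_{rr}\phi_j + \tfrac{2}{r}\partial_r\phi_j - \tfrac{\lambda_j}{r^2}\phi_j + 5w^4\phi_j + h_j \quad\text{in } B_{2R}\times(0,T),
\]
with $\phi_j = 0$ on $\partial B_{2R}$ and, in the sector $j=0$, initial datum $e_0 Z_-$. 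I would first produce a solution of each scalar problem on the bounded cylinder by a standard Galerkin/compactness argument; the real content is the a priori estimate, uniform in $R$ and $T$.

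For the a priori bound I would build supersolutions. Outside the core $\{|y|\lesssim 1\}$ one has $5w^4 \sim |y|^{-4}$, so $L_0$ is a lower-order perturbation of the Laplacian and, on the sector of degree $\ell$, $|y|^{-a}$ is mapped to a multiple of $|y|^{-a-2}$ whenever $a$ is not a homogeneous exponent $\ell$ or $-\ell-1$. Choosing $a=\sigma$ lets the generic part of $h$, of size $\mu_0^\nu|y|^{-(2+\sigma)}$, be absorbed and produces the term $\|h\|_{\nu,2+\sigma}(1+|y|^\sigma)^{-1}$. The faster-decaying contributions of $h^0$ and $h^1$ only interact, after one integration against the kernel, with the slowly decaying homogeneous solutions $|y|^{-1}$ (sector $j=0$) and $|y|^{-2}$ (sectors $j=1,2,3$); here I would use variation of parameters on the finite ball, and the point is that the Dirichlet condition at $|y|=2R$ forces the companion non-decaying homogeneous solution to enter, whose amplitude — once the orthogonality $\int_{B_{2R}}h Z_j = 0$, $j=0,\dots,3$, removes the projection of $h_j$ onto the kernel direction $Z_j$ — is of size $R^{4-\sigma}\|h^0\|_{\nu,2+\sigma}$, respectively $R^{4-\sigma}\|h^1\|_{\nu,2+\sigma}$, giving the weights $R^{4-\sigma}(1+|y|^3)^{-1}$ and $R^{4-\sigma}(1+|y|^4)^{-1}$ in the statement. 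For the sectors $j\ge 4$ one has no kernel at all and the quadratic form associated with $L_0$ is coercive, so the same barrier step closes with the generic weight only.

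The negative eigenvalue $\lambda_-$ of $L_0$ affects only the sector $j=0$, and I would handle it by projecting onto $Z_-$: the coefficient $p(t) := \int_{B_{2R}}\phi^0 Z_-$ satisfies $\mu_0^2 p' = -\lambda_- p + \int_{B_{2R}} h^0 Z_- + (\text{lower order})$, an unstable scalar ODE with a unique bounded solution on $(0,T)$; this solution has a definite initial value, which pins down $e_0 = e_0[h]$ through an absolutely convergent time integral of $\int_{B_{2R}} h^0 Z_-$ and yields $|e_0[h]|\lesssim\|h\|_{\nu,2+\sigma}$, after which subtracting $p(t)Z_-(y)$ returns matters to the stable part where the barrier argument applies. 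The gradient estimate follows from the pointwise bound together with standard interior parabolic estimates after rescaling to unit scale around each point; because $\mu_0\to 0$ the term $\mu_0^2\partial_t$ is a genuine small perturbation of the elliptic problem, so the barriers are undisturbed. The hard part — and precisely what makes the three-dimensional construction delicate — will be the borderline sector $j=0$: $Z_0\sim|y|^{-1}$ is not in $L^2(\mathbb R^3)$, so there is no honest $L^2$-orthogonal projection onto the kernel, and one is forced to work on the truncated ball and to keep explicit, uniform track of the $R$-dependence, which is exactly what dictates the $R^{4-\sigma}$ factors.
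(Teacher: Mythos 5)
This proposition is not proved in the paper at all: it is imported verbatim from \cite{dmw} (the text preceding it says exactly this), so there is no in-paper argument to compare yours against. Your outline does follow the same general strategy as the cited source --- spherical-harmonic decomposition, orthogonality conditions to remove the kernel directions, the unstable ODE for the $Z_-$-projection pinning down $e_0[h]$, barriers for the stable part, and careful tracking of the $R$-dependence forced by $Z_0\notin L^2(\R^3)$ --- so as a road map it is sound.

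Two steps, however, would not survive as written. First, the assertion that ``because $\mu_0\to 0$ the term $\mu_0^2\partial_t$ is a genuine small perturbation of the elliptic problem'' is not a valid reduction: after the change of time variable $\diff\tau=\mu_0^{-2}\diff t$ the equation is $\partial_\tau\phi=\Delta\phi+5w^4\phi+h$ on an interval of infinite $\tau$-length, a bona fide parabolic problem in which the time derivative is of the same order as the spatial operator. What is true is that separated supersolutions $\mu_0^{\nu}(t)g(y)$ see $\mu_0^2\partial_t$ as lower order (since $\mu_0|\mu_0'|\ll1$), but one must then invoke the parabolic comparison principle for the full operator, and no maximum principle controls the unstable direction or the neutral (kernel) directions --- which is precisely where the work lies. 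Second, your mechanism for the $R^{4-\sigma}$ factor is not right: the growing radial homogeneous solution of $\Delta+5w^4$ tends to a constant at infinity (and its mode-one analogue grows), so a multiple of it forced by the Dirichlet condition at $|y|=2R$ cannot produce the stated profile $R^{4-\sigma}(1+|y|^3)^{-1}$, which decays \emph{two powers faster} than $Z_0$ itself. In \cite{dmw} the loss originates instead from the time evolution of the projections $\int_{B_{2R}}\phi\,Z_j\,\diff y$, which are only approximately conserved on the truncated ball (boundary terms at $|y|=2R$ and the $\mu_0^2\partial_t$ contribution accumulate over the infinite $\tau$-horizon); controlling these projections costs the power $R^{4-\sigma}$, and subtracting the resulting multiple of $Z_j$ leaves the faster-decaying remainder recorded in the estimate. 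As it stands your sketch asserts the two nonstandard weights rather than deriving them, and that is exactly the delicate part of the proof.
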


\begin{remark}
\noindent

\medskip

\begin{itemize}
\item[(1)] Since the blow-up profile constructed in the paper is radially symmetric, we only consider the case of mode $0$ in the Fourier decomposition. The construction of nonradial solutions can be carried out in a similar manner as in \cite[Section 10]{dmw} with nonradial perturbation.

\medskip

\item[(2)] Using the method of supersolution as in \cite[Lemma 7.3]{ddw}, one can improve the linear theory at mode 0 in the interior region:
$$
|\phi^0(y,t)|+(1+|y|)|\nabla \phi^0(y,t)|\lesssim \frac{\mu_0^{\nu}(t)R^{\frac{4-\sigma}{3}}}{1+|y|}\|h^0\|_{\nu,2+\sigma}\min\left\{1,(1+|y|)^{-2}R^{\frac{2(4-\sigma)}{3}}\right\}.
$$
If we define
\begin{equation}\label{def-normm0}
\|\phi\|_{0,\nu,\sigma}:=\sup_{(y,t)\in B_{2R}\times(0,T)} \frac{1+|y|}{\mu_0^{\nu}(t)R^{\frac{4-\sigma}{3}}(t)}\left[|\phi(y,t)|+(1+|y|)|\nabla \phi(y,t)|\right],
\end{equation}
then under the assumptions of Proposition \ref{propmode}, we have
\begin{equation*}
\|\phi^0\|_{0,\nu,\sigma}\lesssim \|h^0\|_{\nu,2+\sigma}.
\end{equation*}

\medskip

\item[(3)] Under the assumptions in Proposition \ref{propmode}, if the right hand side $h(y,t)$ further satisfies
$$h(y,t)\in C^{2k-2+2\epsilon,k-1+\epsilon}_{y,t}(B_{4R}\times(0,T))$$
for some $0<\epsilon<1$, then
$$\|\phi\|_{C^{2k,k}_{y,t}(B_{2R}\times(0,T))}\lesssim T^{\epsilon}.$$
This is a consequence of scaling argument and the parabolic Schauder estimate.
\end{itemize}
\end{remark}

\medskip

\subsection{Linear theory for the outer problem}\label{sec-outer}

\medskip

In this section, we develop the linear theory for the outer problem. The model problem is
\begin{equation}\label{outer-linear}
\left\{
\begin{aligned}
\psi_t =\Delta \psi + f, &~\mbox{ in }~\R^3\times(0,T)\\
\psi(\cdot,0)=\psi_0, &~\mbox{ in }~\R^3\\
\end{aligned}
\right.
\end{equation}
where the non-homogeneous term $f$ in \eqref{outer-linear} is assumed to be bounded with respect to the weights appearing in the outer problem \eqref{eqn-outer}. Define the weights
\begin{equation}\label{def-weights}
\begin{cases}
\varrho_1:=\mu_0^{\nu-\frac52}(t)R^{-2-a}(t)\chi_{\{|x|\leq 2\mu_0 R\}}\\
\varrho_2:=\frac{\mu_0 ^{\nu_2}}{|x|^{a_2}}\chi_{\{|x|\geq \mu_0 R\}}\\
\varrho_3:=1\\
\end{cases}
\end{equation}
where $\nu>0$, $0<a<1$, $1\leq a_2\leq 2$, $\nu_2>0$, and we choose $R(t)=\mu_0 ^{-\beta}(t)$ for $\beta\in(0,1/2)$ throughout the paper. We define the norms
\begin{equation}\label{def-norm**}
\|f\|_{**}:=\sup_{(x,t)\in \R^3\times(0,T)} \left(\sum\limits_{i=1}^3 \varrho_i(x,t)\right)^{-1}|f(x,t)|
\end{equation}

\begin{equation}\label{def-norm*}
\begin{aligned}
\|\psi\|_{*}:=&~\mu_0 ^{\frac12-\nu}(0)R^{a}(0)\|\psi\|_{L^{\infty}(\R^3\times(0,T))}+\mu_0 ^{\frac32-\nu}(0)R^{1+a}(0)\|\nabla\psi\|_{L^{\infty}(\R^3\times(0,T))}\\
&~+\sup_{(x,t)\in \R^3\times(0,T)}\left[\mu_0 ^{\frac12-\nu}(t)R^{a}(t)\left|\psi(x,t)-\psi(x,T)\right|\right]\\
&~+\sup_{(x,t)\in \R^3\times(0,T)}\left[\mu_0 ^{\frac32-\nu}(t)R^{1+a}(t)|\left|\nabla\psi(x,t)-\nabla\psi(x,T)\right|\right]\\
&~+\sup_{\R^3\times I_T} \frac{\mu_0 ^{2\gamma+\frac12-\nu}(t_2)R^{2\gamma+a}(t_2)}{(t_2-t_1)^{\gamma}} |\psi(x,t_2)-\psi(x,t_1)|,
\end{aligned}
\end{equation}
where $\nu>0$, $0<a,\gamma<1$, and the last supremum is taken over
$$\R^3\times I_T=\left\{(x,t_1,t_2): ~x\in\R^3, ~0\leq t_1\leq t_2\leq T,~t_2-t_1\leq \frac{1}{10}(T-t_2)\right\}.$$
For problem \eqref{outer-linear}, we have the following estimates.
\begin{prop}\label{outer-apriori}
Let $\psi$ be the solution to problem \eqref{outer-linear} with $\|f\|_{**}<+\infty$. Then it holds that
\begin{equation*}\label{est-outer-apriori}
\|\psi\|_*\lesssim \|f\|_{**}.
\end{equation*}
\end{prop}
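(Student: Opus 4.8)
The plan is to prove Proposition \ref{outer-apriori} by a standard representation-formula-plus-barrier argument for the linear heat equation, estimating each of the five seminorms making up $\|\psi\|_*$ separately against $\|f\|_{**}$. Since the problem \eqref{outer-linear} is posed on all of $\R^3$ with $\psi(\cdot,0)=\psi_0$ (and we will take $\psi_0$ compatible with the decay of the construction, e.g. $\psi_0=0$ or of the same size), Duhamel's formula gives
\[
\psi(x,t)=\int_{\R^3}G(x-\xi,t)\psi_0(\xi)\diff\xi+\int_0^t\int_{\R^3}G(x-\xi,t-s)f(\xi,s)\diff\xi\diff s,
\]
where $G$ is the Gaussian heat kernel. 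The whole estimate then reduces to controlling the action of the heat semigroup on functions bounded by $\varrho_1+\varrho_2+\varrho_3$.

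First I would treat the $L^\infty$ bound. For each of the three weights $\varrho_i$ I construct an explicit supersolution of the heat equation dominating the corresponding Duhamel integral. For $\varrho_3=1$ this is immediate (the constant in time integral grows like $t$, but against the time-weight $\mu_0^{1/2-\nu}(0)R^a(0)$ evaluated at $t=0$ this is fine for $T$ small). For $\varrho_1$, which is supported in $\{|x|\le 2\mu_0 R\}$ and carries the fast-decaying-in-$t$ factor $\mu_0^{\nu-5/2}R^{-2-a}$, the spatial integral is over a shrinking ball and one gains a factor of its volume $\sim(\mu_0 R)^3$; combined with the time integral $\int_0^t\mu_0^{\nu-5/2}R^{-2-a}\,\mu_0^{-2}\diff\tau$ in the $\tau$ variable one checks the exponents in $\mu_0=c(T-t)^{2k}$ and $R=\mu_0^{-\beta}$ close with room to spare because $\beta<1/2$. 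For $\varrho_2=\mu_0^{\nu_2}|x|^{-a_2}\chi_{\{|x|\ge\mu_0 R\}}$ with $1\le a_2\le 2$, the relevant estimate is the elementary bound $\int_0^t\int_{\R^3}G(x-\xi,t-s)|\xi|^{-a_2}\diff\xi\diff s\lesssim |x|^{2-a_2}$ (for $|x|$ not too large) together with the contribution from the far region; here one uses $a_2<2$ so that $|x|^{-a_2}$ is locally integrable in $\R^3$ and $a_2\ge 1$ so the tail converges. Combining these three and matching against the weights defining $\|\psi\|_*$ yields the $L^\infty$ part.

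Next, the gradient bound follows by differentiating the Duhamel formula and using $|\nabla_x G(x,t)|\lesssim t^{-1/2}G(x,ct)$, which costs one extra power of $(t-s)^{-1/2}$ in the time integral; this is exactly compensated by the extra factor $\mu_0 R$ (equivalently $\mu_0^{3/2-\nu}R^{1+a}$ versus $\mu_0^{1/2-\nu}R^a$) in the corresponding seminorm. The two seminorms measuring $|\psi(x,t)-\psi(x,T)|$ and $|\nabla\psi(x,t)-\nabla\psi(x,T)|$ are handled by writing the difference as $\int_t^T\partial_s\psi\,\diff s$ (or directly via the semigroup as $[G(T-t)-\mathrm{Id}]$ acting on the data at time $t$ plus a short Duhamel piece on $[t,T]$) and again dominating by barriers; the point is that on the time interval $[t,T]$ the smallness of $(T-t)$ relative to the weights gives the claimed gain. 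Finally, the parabolic Hölder seminorm in the last line, restricted to the range $t_2-t_1\le\frac1{10}(T-t_2)$, is obtained by the classical interpolation: split the Duhamel integral defining $\psi(x,t_2)-\psi(x,t_1)$ into the part over $[t_1-(t_2-t_1),t_2]$, estimated directly in $L^\infty$ using the local size of $f$, and the part over $[0,t_1-(t_2-t_1)]$, on which one uses $|G(x,t_2-s)-G(x,t_1-s)|\lesssim \frac{t_2-t_1}{t_1-s}\,[\,G(x,c(t_2-s))+G(x,c(t_1-s))\,]$; raising the resulting gain to the power $\gamma$ and summing the geometric series in dyadic distances from $t_1$ gives the factor $(t_2-t_1)^\gamma$ with the weight $\mu_0^{2\gamma+1/2-\nu}(t_2)R^{2\gamma+a}(t_2)$, the point being that on the admissible range $\mu_0(t_1)\sim\mu_0(t_2)$ and $R(t_1)\sim R(t_2)$ so we may freely evaluate weights at $t_2$.

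The main obstacle is the weight $\varrho_2$: because $a_2$ is allowed to be as small as $1$, the corresponding contribution to $\psi$ decays only like $|x|^{-(a_2-1)}$ (no better, and for $a_2=1$ not at all, only logarithmically unless one is careful), so the barrier must be chosen as a time-dependent function of the form $\mu_0^{\nu_2'}(t)\langle x\rangle^{-(a_2-1)}$ (or $\mu_0^{\nu_2'}(t)\min\{1,\mu_0 R/|x|\}^{a_2-1}$ truncated near the inner ball) and one must verify it is genuinely a supersolution, i.e. that $-\partial_t(\text{barrier})+\Delta(\text{barrier})\le -(\varrho_1+\varrho_2+\varrho_3)$ in the relevant region — this is where the precise relations $\mu_0\sim(T-t)^{2k}$, $R=\mu_0^{-\beta}$, $\beta\in(0,1/2)$, and the ranges $0<a<1$, $\nu>0$, $\nu_2>0$ all get used, and getting every exponent to close with a strict margin (so that $T$ small absorbs constants) is the delicate bookkeeping. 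This construction is entirely parallel to the outer linear theory in \cite{dmw,ddw}, to which we refer for the full computation.
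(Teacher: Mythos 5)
Your proposal is correct and follows essentially the same route as the paper: Proposition \ref{outer-apriori} is proved there by splitting $f$ according to the three weights $\varrho_1,\varrho_2,\varrho_3$ (Lemmas \ref{lemma-rhs1}--\ref{lemma-rhs3}) and estimating each seminorm in $\|\cdot\|_*$ via Duhamel's formula with exactly the heat-kernel bounds, time-interval splittings, gradient estimate via $|\nabla_xG|\lesssim (t-s)^{-1/2}G$, and $|\partial_t G|$ interpolation for the H\"older piece that you describe. The only cosmetic difference is your supersolution framing for part of the $L^\infty$ bound, and your ``main obstacle'' concerning spatial decay of the $\varrho_2$-contribution is moot since $\|\psi\|_*$ only requires uniform-in-$x$ bounds with time weights; the actual point there is the parameter constraint $\nu_2+\frac{2-a_2}{4k}>\nu-\frac12+a\beta$ recorded in Remark \ref{remark5.2}.
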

Proposition \ref{outer-apriori} is established by the following three lemmas with different right hand sides.

\begin{lemma}\label{lemma-rhs1}
Let $\psi$ solve problem \eqref{outer-linear} with right hand side
$$|f(x,t)|\lesssim \mu_0 ^{\nu-\frac52}(t)R^{-2-a}(t)\chi_{\{|x|\leq 2\mu_0  R\}}$$
with $0<a<1$ and $\nu>0$. Then it holds that
\begin{equation}\label{outer-rhs1}
|\psi(x,t)|\lesssim\mu_0 ^{\nu-\frac12}(0)R^{-a}(0),
\end{equation}
\begin{equation}\label{outerT-rhs1}
|\psi(x,t)-\psi(x,T)|\lesssim\mu_0 ^{\nu-\frac12}(t)R^{-a}(t),
\end{equation}
\begin{equation}\label{outergradient-rhs1}
|\nabla \psi(x,t)|\lesssim \mu_0 ^{\nu-\frac32}(0)R^{-1-a}(0),
\end{equation}
\begin{equation}\label{outergradientT-rhs1}
|\nabla \psi(x,t)-\nabla \psi(x,T)|\lesssim \mu_0 ^{\nu-\frac32}(t)R^{-1-a}(t),
\end{equation}
and
\begin{equation}\label{outerholder-rhs1}
|\psi(x,t_2)-\psi(x,t_1)|\lesssim\mu_0 ^{\nu+\gamma_1-\frac52}(t_2)R^{\gamma_1-2-a}(t_2) (t_2-t_1)^{1-\gamma_1/2},
\end{equation}
where $0\leq t_1\leq t_2\leq T$ with $t_2-t_1\leq \frac{1}{10}(T-t_2)$ and $\gamma_1\in(0,1)$.
\end{lemma}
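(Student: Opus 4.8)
The plan is to estimate $\psi$ directly from Duhamel's formula, treating the heat semigroup acting on the localized source $f$. Since $f$ is supported in $\{|x|\le 2\mu_0(s)R(s)\}$ with size $\mu_0^{\nu-5/2}(s)R^{-2-a}(s)$, its spatial $L^1$ norm at time $s$ is of order $\mu_0^{\nu-5/2}(s)R^{-2-a}(s)\cdot(\mu_0(s)R(s))^3 = \mu_0^{\nu+1/2}(s)R^{1-a}(s)$, while its $L^\infty$ norm is $\mu_0^{\nu-5/2}(s)R^{-2-a}(s)$. The basic bound for $\psi(x,t)=\int_0^t e^{(t-s)\Delta}f(\cdot,s)\diff s$ splits into $|x|$ near the source and $|x|$ far: for $|x|$ comparable to or smaller than the support radius one uses the $L^\infty$ bound of the kernel convolution against $f$, picking up $\int_0^t\|f(\cdot,s)\|_\infty\wedge\big(\|f(\cdot,s)\|_1(t-s)^{-3/2}\big)\diff s$; for $|x|$ far, the Gaussian decay of the heat kernel gives fast spatial decay. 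The whole point is that $\mu_0(s)=3^{1/2}A(T-s)^{2k}$ and $R(s)=\mu_0^{-\beta}(s)$ are monotone in $s$, so all time integrals are dominated by their behavior near $s=t$ (or $s=0$ for the sup-in-$x$ statements at $t=0$), and one is left estimating an elementary integral in $s$ of a power of $(T-s)$.

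Concretely, I would first prove \eqref{outer-rhs1}: bound $|\psi(x,t)|\le \int_0^t\big[(t-s)^{-3/2}\|f(\cdot,s)\|_{L^1}\big]\wedge\|f(\cdot,s)\|_{L^\infty}\,\diff s$ is too lossy; instead, since the source radius $\mu_0 R\ll\sqrt{T-t}$, I would directly compute $e^{(t-s)\Delta}f$: when $t-s\lesssim (\mu_0(s)R(s))^2$ use the $L^\infty$ bound, when $t-s\gtrsim(\mu_0(s)R(s))^2$ use $\|f\|_1\,(t-s)^{-3/2}$. Summing (integrating) these over $s\in(0,t)$ and using monotonicity of $\mu_0,R$, the dominant contribution comes from $s$ close to $t$, giving $|\psi(x,t)|\lesssim \sup_{s\le t}\mu_0^{\nu-1/2}(s)R^{-a}(s)=\mu_0^{\nu-1/2}(0)R^{-a}(0)$ because $\mu_0^{\nu-1/2}R^{-a}=\mu_0^{\nu-1/2+a\beta}$ and $\nu-1/2+a\beta$ — wait, here one must check the sign of the exponent of $(T-t)$; if it is negative the sup is at $s=0$, which is why the bound is stated with argument $0$. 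The estimates \eqref{outerT-rhs1} and \eqref{outergradientT-rhs1} (differences with the value at $T$) follow the same way but one integrates the source over $(0,t)$ versus $(0,T)$, i.e. one must bound the ``missing'' piece $\int_t^T$; near $s=T$ both $\mu_0$ and $R$ degenerate, and this is where the precise power counting $\mu_0^{\nu-1/2}(t)R^{-a}(t)\to 0$ as $t\to T$ is used. The gradient estimates \eqref{outergradient-rhs1}, \eqref{outergradientT-rhs1} are identical with the extra factor $(t-s)^{-1/2}$ from differentiating the kernel (equivalently one extra inverse power of the relevant length scale), producing the $R^{-1-a}$ and $\mu_0^{-3/2}$ scaling.

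For the parabolic Hölder bound \eqref{outerholder-rhs1}, I would write $\psi(x,t_2)-\psi(x,t_1) = \int_{t_1}^{t_2}e^{(t_2-s)\Delta}f\,\diff s + \int_0^{t_1}\big(e^{(t_2-s)\Delta}-e^{(t_1-s)\Delta}\big)f\,\diff s$. The first integral is bounded by $(t_2-t_1)\sup_{s\in(t_1,t_2)}\|f(\cdot,s)\|_\infty$, which after inserting the weight gives the claimed power $(t_2-t_1)^{1-\gamma_1/2}$ upon interpolating with a trivial $L^\infty$ bound (one genuinely uses the flexibility $\gamma_1\in(0,1)$ here). For the second integral one uses $\|(e^{(t_2-s)\Delta}-e^{(t_1-s)\Delta})g\|_\infty\lesssim (t_2-t_1)^{\gamma_1/2}(t_1-s)^{-\gamma_1/2}\|g\|_\infty \wedge (t_2-t_1)^{\gamma_1/2}(t_1-s)^{-3/2-\gamma_1/2}\|g\|_1$, then integrates in $s$, again dominated by $s\approx t_1$, and uses the constraint $t_2-t_1\le\frac1{10}(T-t_2)$ to replace $\mu_0(t_1),R(t_1)$ by $\mu_0(t_2),R(t_2)$ up to constants. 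The main obstacle is purely the bookkeeping: one must track how $\mu_0(s)R(s)$, $\mu_0(s)$, and $R(s)=\mu_0^{-\beta}(s)$ interact inside each $s$-integral and verify that every resulting exponent of $(T-s)$ has the right sign so that the integral converges and is controlled by its endpoint — the constraint $\beta\in(0,1/2)$ and $0<a<1$ are exactly what make this work, and the delicate case is always $s\to T$ where the inner scale collapses.
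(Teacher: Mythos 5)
Your plan is essentially the paper's own proof: Duhamel's formula with the time integral split at $t-s\sim(\mu_0(t)R(t))^2$ (the paper's cutoff $t-\mu_0^{\delta_1}(t)$ with $\delta_1=2-2\beta$ is exactly this), using the $L^\infty$ bound of $f$ near $s=t$ and the $L^1$ bound $\|f\|_{L^1}\sim\mu_0^{\nu+1/2}R^{1-a}$ with the kernel factor $(t-s)^{-3/2}$ away from it, mean-value/difference bounds on the heat kernel for \eqref{outerT-rhs1} and \eqref{outerholder-rhs1}, and the $L^1$--$L^\infty$ interpolation with exponent $\gamma_1$ coming from the small support for the H\"older piece. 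The exponent bookkeeping you defer is carried out in the paper's Appendix and works out as you anticipate.
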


\begin{lemma}\label{lemma-rhs2}
Let $\psi$ solve problem \eqref{outer-linear} with right hand side
$$|f(x,t)|\lesssim \frac{\mu_0 ^{\nu_2}}{|x|^{a_2}}\chi_{\{|x|\geq \mu_0  R\}},$$
where $\nu_2>0$ and $1\leq a_2\leq 2$.
Then it holds that
\begin{equation}\label{outer-rhs2}
|\psi(x,t)|\lesssim \mu_0^{\nu_2+\frac{2-a_2}{4k}}(0),
\end{equation}
\begin{equation}\label{outerT-rhs2}
|\psi(x,t)-\psi(x,T)|\lesssim \mu_0^{\nu_2+\frac{2-a_2}{4k}}(t),
\end{equation}
\begin{equation}\label{outergradient-rhs2}
|\nabla \psi(x,t)|\lesssim \mu_0^{\nu_2+\frac{1-a_2}{4k}}(0),
\end{equation}
\begin{equation}\label{outergradientT-rhs2}
|\nabla \psi(x,t)-\nabla \psi(x,T)|\lesssim \mu_0^{\nu_2+\frac{1-a_2}{4k}}(t),
\end{equation}
and
\begin{equation}\label{outerholder-rhs2}
|\psi(x,t_2)-\psi(x,t_1)|\lesssim \frac{\mu_0 ^{\nu_2}(t)}{(\mu_0 (t)R(t))^{2\gamma}}(t_2-t_1)^{\gamma},
\end{equation}
where $0\leq t_1\leq t_2\leq T$ with $t_2-t_1\leq \frac{1}{10}(T-t_2)$ and $\gamma\in(0,1)$.
\end{lemma}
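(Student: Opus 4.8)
It suffices to treat the solution with $\psi_0\equiv0$, which by Duhamel's formula is
\[
\psi(x,t)=\int_0^t\!\!\int_{\RRN}G_{t-s}(x-\xi)\,f(\xi,s)\,\diff\xi\,\diff s ,\qquad
G_\sigma(y):=(4\pi\sigma)^{-3/2}e^{-|y|^2/(4\sigma)} ,
\]
a general bounded $\psi_0$ adding only the caloric extension $e^{t\Delta}\psi_0$, handled separately. Writing $\rho(s):=\mu_0(s)R(s)=\mu_0^{1-\beta}(s)$, all five bounds reduce to sharp pointwise control of the elementary convolution
\[
\KK(x;\sigma,\rho):=\int_{\{|\xi|\ge\rho\}}G_\sigma(x-\xi)\,|\xi|^{-a_2}\,\diff\xi
\]
and of $\nn_x\KK$, followed by an $s$–integration and the conversion between powers of $T-t$ and powers of $\mu_0$ via $T-t=(3^{1/2}A)^{-1/(2k)}\mu_0(t)^{1/(2k)}$. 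Since $1\le a_2\le 2<3$, the profile $|\xi|^{-a_2}$ is locally integrable, and a case analysis according to which of $|x|,\sqrt\sigma,\rho$ dominates gives
\[
\KK(x;\sigma,\rho)\lesssim(|x|+\sqrt\sigma+\rho)^{-a_2},\qquad
|\nn_x\KK(x;\sigma,\rho)|\lesssim\frac{(|x|+\sqrt\sigma+\rho)^{-a_2}}{|x|+\sqrt\sigma};
\]
moreover, when $x$ sits in the hole $\{|x|\le\rho/2\}$ with $\sqrt\sigma\le\rho$ the Gaussian is evaluated only at distance $\ge\rho/2$, so $\KK\lesssim\rho^{-a_2}e^{-c\rho^2/\sigma}$ (and likewise for the gradient), which tames the near–diagonal region.

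\emph{The $L^\infty$ bounds \eqref{outer-rhs2}, \eqref{outergradient-rhs2}.} Insert the above into Duhamel. As $\mu_0$ is decreasing and $\nu_2>0$ we may replace $\mu_0^{\nu_2}(s)$ by $\mu_0^{\nu_2}(0)$; on $\{\sqrt{t-s}\ge\rho(s)\}$ bound the kernel by $(t-s)^{-a_2/2}$, and on the complementary near–diagonal range (there $\rho(s)\simeq\rho(t)$ and $t-s\lesssim\rho(t)^2$) use the exponential bound. This yields
\[
|\psi(x,t)|\lesssim\mu_0^{\nu_2}(0)\Big(\int_0^t(t-s)^{-a_2/2}\,\diff s+\rho(t)^{2-a_2}\Big)\lesssim\mu_0^{\nu_2}(0)\,T^{\frac{2-a_2}{2}},
\]
and $\sqrt T\simeq\mu_0(0)^{1/(4k)}$ turns the right–hand side into $\mu_0^{\,\nu_2+\frac{2-a_2}{4k}}(0)$. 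The gradient estimate is identical: the extra factor $(|x|+\sqrt{t-s})^{-1}\le(t-s)^{-1/2}$ produces $\int_0^t(t-s)^{-(1+a_2)/2}\,\diff s$ (convergent near $s=t$ after the exponential/$\rho(s)$ cut-off), of size $\simeq T^{(1-a_2)/2}\simeq\mu_0(0)^{(1-a_2)/(4k)}$.

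\emph{Differences in time \eqref{outerT-rhs2}, \eqref{outergradientT-rhs2} and Hölder bound \eqref{outerholder-rhs2}.} After the substitution $s\mapsto T-\sigma$ (resp. $t-\sigma$) write
\[
\psi(x,T)-\psi(x,t)=\int_t^T\big(e^{\sigma\Delta}f(\cdot,T-\sigma)\big)(x)\,\diff\sigma
+\int_0^t\big(e^{\sigma\Delta}[f(\cdot,T-\sigma)-f(\cdot,t-\sigma)]\big)(x)\,\diff\sigma .
\]
In the first integral only times $\ge t$ occur, so $\mu_0^{\nu_2}(\cdot)\le\mu_0^{\nu_2}(t)$ and $\sigma\le T-t$, giving $\lesssim\mu_0^{\nu_2}(t)(T-t)^{\frac{2-a_2}{2}}\simeq\mu_0^{\,\nu_2+\frac{2-a_2}{4k}}(t)$. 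In the second, $f(\cdot,T-\sigma)-f(\cdot,t-\sigma)$ is supported in $\{|\xi|\ge\rho(T-\sigma)\}$ and equals a small amplitude change $\mu_0^{\nu_2}(T-\sigma)-\mu_0^{\nu_2}(t-\sigma)=O\!\big((T-t)(\mu_0^{\nu_2})'\big)$ plus the thin moving–cut–off annulus $\rho(T-\sigma)\le|\xi|\le\rho(t-\sigma)$ of width $O\!\big((T-t)\,\rho'\big)$; both pieces are controlled by the same kernel bounds and, after integration in $\sigma$, contribute only $\mu_0^{\,\nu_2+\frac{2-a_2}{4k}}(t)$, not a power of $\mu_0(0)$. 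The gradient version is analogous with one more factor $(t-s)^{-1/2}$. For \eqref{outerholder-rhs2} split $\int_0^{t_2}=\int_0^{t_1}+\int_{t_1}^{t_2}$; the middle piece is bounded directly by the kernel estimate, and for $\int_0^{t_1}(e^{(t_2-s)\Delta}-e^{(t_1-s)\Delta})f$ one uses $|(e^{(t_2-t_1)\Delta}-I)v(x)|\lesssim\big(\frac{t_2-t_1}{t_1-s}\big)^{\gamma}$ times the pointwise bound on $e^{(t_1-s)\Delta}f$, the constraint $t_2-t_1\le\frac1{10}(T-t_2)$ ensuring that $\mu_0$ and $R$ do not move across $[t_1,t_2]$ and supplying the weight $(\mu_0(t)R(t))^{-2\gamma}$.

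\emph{Main obstacle.} The heart of the matter is extracting the \emph{sharp} power of $\mu_0$: a power of $\mu_0(t)$ (not $\mu_0(0)$) for the time–differences, and no logarithmic loss at the borderline exponent $a_2=2$. This forces one to use simultaneously the size control $|f|\lesssim(\mu_0R)^{-a_2}$ near the moving cut-off and the spatial decay $|x|^{-a_2}$ away from it, and to track carefully how the inner scale $\mu_0(s)R(s)=\mu_0^{1-\beta}(s)$ interacts with the heat–kernel spreading as $s\uparrow t$; the exponential localisation inside the hole $\{|x|\lesssim\mu_0R\}$ is precisely what prevents the logarithm in the near–diagonal region.
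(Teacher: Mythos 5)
Your treatment of \eqref{outer-rhs2} and \eqref{outergradient-rhs2} is essentially the paper's own argument: Duhamel's formula, the pointwise bound $\int_{\{|\xi|\ge \rho\}}G(x-\xi,\sigma)|\xi|^{-a_2}\,d\xi\lesssim \min\big(\sigma^{-a_2/2},\rho^{-a_2}\big)$ for $a_2\le 2$, a split of the $s$--integral at the scale $t-s\sim(\mu_0R)^2$, and the conversion $T\simeq \mu_0^{1/(2k)}(0)$. That part is fine and matches the appendix.

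There is, however, a genuine gap in your treatment of \eqref{outerT-rhs2} (and hence \eqref{outergradientT-rhs2}). Your identity
$\psi(x,T)-\psi(x,t)=\int_t^T e^{\sigma\Delta}f(\cdot,T-\sigma)\,d\sigma+\int_0^t e^{\sigma\Delta}\big[f(\cdot,T-\sigma)-f(\cdot,t-\sigma)\big]\,d\sigma$
is algebraically correct, but the claims you draw from it are backwards: in the first integral $\sigma$ ranges over $(t,T)$ (so ``$\sigma\le T-t$'' is false), and $f$ is sampled at the times $T-\sigma\in(0,T-t)$, i.e.\ near the \emph{initial} time, where $\mu_0^{\nu_2}(T-\sigma)$ is of order $\mu_0^{\nu_2}(0)$, not $\le\mu_0^{\nu_2}(t)$. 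Rewriting that piece as $\int_0^{T-t}e^{(T-s)\Delta}f(\cdot,s)\,ds$, its natural size is $(T-t)\,\mu_0^{\nu_2}(0)\,T^{-a_2/2}\simeq (T-t)\,T^{2k\nu_2-a_2/2}$, which is \emph{not} bounded by the target $\mu_0^{\nu_2+\frac{2-a_2}{4k}}(t)\simeq (T-t)^{2k\nu_2+1-a_2/2}$ once $2k\nu_2>a_2/2$; the same confusion of time scales enters your second integral, where for $\sigma$ close to $t$ the mean value theorem produces $(\mu_0^{\nu_2})'$ evaluated near time $0$. So the two pieces of your decomposition are individually too large and the desired bound could only come from a cancellation between them that a term-by-term estimate cannot see. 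The decomposition the paper intends (see the proof of Lemma \ref{lemma-rhs1}) keeps $f$ at time $s$ and places the difference on the heat kernel: $\int_t^T G(x-w,T-s)f(w,s)$ — where $s\ge t$ genuinely gives $\mu_0^{\nu_2}(s)\le\mu_0^{\nu_2}(t)$ and $T-s\le T-t$ — plus $\int_0^t\big[G(x-w,T-s)-G(x-w,t-s)\big]f(w,s)$, the latter split again at $s=t-(T-t)$ and controlled via $|G(\cdot,T-s)-G(\cdot,t-s)|\le (T-t)\sup_v|\partial_tG(\cdot,t_v-s)|$; note that even there the far-range piece produces $(T-t)\int_0^{t-(T-t)}(t-s)^{2k\nu_2-1-a_2/2}ds$, so tracking exactly when the answer is a power of $\mu_0(t)$ rather than $\mu_0(0)$ is the delicate point and must be done explicitly. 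Your sketch of the H\"older estimate \eqref{outerholder-rhs2} (splitting at $t_1$ and at $t_1-(t_2-t_1)$) follows the correct pattern of Lemma \ref{lemma-rhs1} and is not affected by this issue.
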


\begin{lemma}\label{lemma-rhs3}
Let $\psi$ solve problem \eqref{outer-linear} with right hand side
$$|f(x,t)|\lesssim 1.$$
Then it holds that
\begin{equation*}\label{outer-rhs3}
|\psi(x,t)|\lesssim t,
\end{equation*}
\begin{equation*}\label{outerT-rhs3}
|\psi(x,t)-\psi(x,T)|\lesssim (T-t),
\end{equation*}
\begin{equation*}\label{outergradient-rhs3}
|\nabla\psi(x,t)|\lesssim T^{1/2},
\end{equation*}
\begin{equation*}\label{outergradientT-rhs3}
|\nabla\psi(x,t)-\nabla\psi(x,T)|\lesssim (T-t)^{1/2},
\end{equation*}
and
\begin{equation*}\label{outerholder-rhs3}
|\psi(x,t_2)-\psi(x,t_1)|\lesssim (t_2-t_1),
\end{equation*}
where $0\leq t_1\leq t_2\leq T$ with $t_2-t_1\leq \frac{1}{10}(T-t_2)$.
\end{lemma}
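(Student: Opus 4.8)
The plan is to treat \eqref{outer-linear} with zero initial datum $\psi_0=0$, which is the relevant normalization for the linear theory, and to read every bound off the Duhamel representation
\[
\psi(x,t)=\int_0^t\big(e^{(t-s)\Delta}f(\cdot,s)\big)(x)\,ds,
\]
where $e^{\tau\Delta}$ denotes convolution on $\RRN$ with the Gaussian $K_\tau(z)=(4\pi\tau)^{-3/2}e^{-|z|^2/(4\tau)}$. Everything then reduces to four elementary kernel facts: the $L^\infty$ contraction $\|K_\tau\|_{L^1}=1$; the gradient smoothing $\|\nabla K_\tau\|_{L^1}\lesssim\tau^{-1/2}$; and the difference bounds $\|K_{\tau+\delta}-K_\tau\|_{L^1}\lesssim\min\{1,\delta/\tau\}$ and $\|\nabla K_{\tau+\delta}-\nabla K_\tau\|_{L^1}\lesssim\min\{\tau^{-1/2},\delta\,\tau^{-3/2}\}$, the last two obtained by writing $K_{\tau+\delta}-K_\tau=\int_0^\delta\Delta K_{\tau+r}\,dr$ and using the scaling identity $\|\Delta K_\tau\|_{L^1}\lesssim\tau^{-1}$. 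Because all these kernels are translation invariant, every bound produced this way is automatically uniform in $x\in\RRN$, as the statement requires.

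With these in hand the size and gradient estimates are one line each. From $\|f(\cdot,s)\|_{L^\infty}\lesssim1$ one gets $|\psi(x,t)|\le\int_0^t\|f(\cdot,s)\|_{L^\infty}\,ds\lesssim t$ and $|\nabla\psi(x,t)|\le\int_0^t\|\nabla K_{t-s}\|_{L^1}\,ds\lesssim\int_0^t(t-s)^{-1/2}\,ds\lesssim T^{1/2}$. Evaluating the same formulas at $t=T$ shows that $\psi(\cdot,T)$ and $\nabla\psi(\cdot,T)$ are well defined, the singularities of the integrands at $s=T$ being integrable, so the difference quantities in the lemma make sense.

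For the temporal increments I would take $0\le t_1\le t_2\le T$ and split
\[
\psi(x,t_2)-\psi(x,t_1)=\int_{t_1}^{t_2}\big(e^{(t_2-s)\Delta}f(\cdot,s)\big)(x)\,ds+\int_0^{t_1}\big(e^{(t_2-s)\Delta}-e^{(t_1-s)\Delta}\big)f(\cdot,s)(x)\,ds.
\]
The first integral (the ``new'' one) is $\le t_2-t_1$ by the contraction property. For the second (the ``old'' one) one inserts the kernel-difference bound with $\tau=t_1-s$, $\delta=t_2-t_1$, reducing it to $\int_0^{t_1}\min\{1,(t_2-t_1)/(t_1-s)\}\,ds$, which one then controls by cutting at the scale $t_1-s\sim t_2-t_1$: the near-diagonal part is handled by the crude bound $1$ over an interval of length $\lesssim t_2-t_1$, and the far part by the decaying bound. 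Specializing $t_1=t$ and letting $t_2\to T$ gives $|\psi(x,t)-\psi(x,T)|\lesssim T-t$. The gradient increments go exactly the same way with $K$ replaced by $\nabla K$: the new integral contributes $\int_t^T(T-s)^{-1/2}\,ds\lesssim(T-t)^{1/2}$ and the old one $\int_0^t\min\{(t-s)^{-1/2},(T-t)(t-s)^{-3/2}\}\,ds\lesssim(T-t)^{1/2}$ after the same splitting at $t-s\sim T-t$. Collecting the pieces gives all the estimates of Lemma \ref{lemma-rhs3}; together with Lemmas \ref{lemma-rhs1} and \ref{lemma-rhs2} they assemble into Proposition \ref{outer-apriori} through the norm \eqref{def-norm*}.

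The step I expect to require the most care is the near-diagonal region $t_1-s\sim t_2-t_1$ (respectively $t-s\sim T-t$) in the ``old'' part of the Duhamel integral: there the heat-kernel difference is only of size $O(1)$, so one is relying on this region being thin, and extracting from it precisely the linear (respectively square-root) rate in the time increment --- without picking up a stray logarithmic factor --- is where one must be careful about the sharp form of the kernel-difference estimate and the choice of cut-off scale. By comparison the size and first-order spatial bounds are immediate from the contraction and smoothing estimates alone.
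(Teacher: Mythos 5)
You have chosen the route the paper intends (Duhamel's formula plus $L^1$ heat--kernel bounds); note that the paper actually omits the proof of this lemma altogether, proving only Lemmas \ref{lemma-rhs1} and \ref{lemma-rhs2} in the appendix, so the comparison is against the natural intended argument. The first, third and fourth estimates go through exactly as you say: the $L^\infty$ contraction gives $|\psi|\lesssim t$, the smoothing bound gives $|\nabla\psi|\lesssim t^{1/2}\le T^{1/2}$, and for the gradient increment the splitting at $t-s\sim T-t$ gives $\int_0^{T-t}u^{-1/2}\,du+(T-t)\int_{T-t}^{t}u^{-3/2}\,du\lesssim (T-t)^{1/2}$ with no logarithm, because both exponents are away from the critical value $-1$.

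The gap is in the two zeroth-order time increments, and it sits exactly at the point you flag. In the ``old'' part of the Duhamel integral your own reduction gives $\int_0^{t_1}\min\{1,\delta/(t_1-s)\}\,ds$ with $\delta=t_2-t_1$, and the far region $t_1-s\in[\delta,t_1]$ contributes $\delta\log(t_1/\delta)$: here the exponent is the critical $-1$, and no choice of cut-off scale removes the logarithm. Nor can the kernel-difference estimate be sharpened, since $\|K_{\tau+\delta}-K_\tau\|_{L^1}$ is comparable to $\min\{1,\delta/\tau\}$ from both sides. In fact the clean Lipschitz bound $|\psi(x,t_2)-\psi(x,t_1)|\lesssim t_2-t_1$ is false for general $f$ with $|f|\lesssim1$: it would force $\Delta\psi=\partial_t\psi-f\in L^\infty$, i.e.\ the endpoint $p=\infty$ of parabolic Calder\'on--Zygmund/Schauder theory, which fails. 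One either needs cancellation in $s$ (integration by parts, requiring time regularity of $f$) or must settle for $(t_2-t_1)\bigl(1+\log\frac{t_1}{t_2-t_1}\bigr)$, equivalently $(t_2-t_1)^{\gamma}$ up to harmless factors for any $\gamma<1$; the same remark applies to $|\psi(x,t)-\psi(x,T)|\lesssim T-t$. This is more an imprecision in the statement of the lemma than a fatal flaw of your argument: the norm \eqref{def-norm*} only uses the time increment of $\psi$ through a $\gamma$-H\"older seminorm with $\gamma<1$, and the $\psi(x,t)-\psi(x,T)$ term tolerates a logarithm against the $T^{\epsilon}$ gains elsewhere, so what your computation actually delivers suffices for Proposition \ref{outer-apriori}. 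But your claim that the linear rate comes out ``without a stray logarithmic factor'' is not substantiated and cannot be, so as a proof of the lemma as literally stated the argument does not close.
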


Proposition \ref{outer-apriori} is a direct consequence of Lemma \ref{lemma-rhs1}, Lemma \ref{lemma-rhs2} and Lemma \ref{lemma-rhs3}, and the proofs of Lemma \ref{lemma-rhs1}, Lemma \ref{lemma-rhs2} and Lemma \ref{lemma-rhs3} are achieved by using Duhamel's formula similarly as in \cite{ddw}. Here we only give proof of Lemma \ref{lemma-rhs1} and Lemma \ref{lemma-rhs2} in the Appendix.

\medskip

\begin{remark}\label{remark5.2}
\noindent

\medskip

\begin{itemize}
\item[(1)] Let us point out the reason why we use the $\|\cdot\|_{*}$-norm of $\psi$ (\eqref{def-norm*}) only involving $\nu$ but not $\nu_2$ appearing in Lemma \ref{lemma-rhs2}. Lemma \ref{lemma-rhs2} is needed to deal with the right hand side of outer problem with cut-off $1-\eta_R$ in front. For convenience, when we carry out the inner--outer gluing procedure to bound right hand sides in the chosen topology, we will adjust $\nu_2$ such that the control of $\psi$ in Lemma \ref{lemma-rhs2} is better than that of Lemma \ref{lemma-rhs1}. This will result in a constraint for the parameters
$$
\nu_2+\frac{2-a_2}{4k}>\nu-\frac12+a\beta.
$$
In fact, the above constraint will be satisfied by the choices of parameters in Section \ref{subsec-choices}.

\medskip

\item[(2)] Under the assumptions of Proposition \ref{outer-apriori}, if we further assume that
$$f(x,t)\in C^{2k-2+2\epsilon,k-1+\epsilon}_{x,t}(\R^3\times(0,T)),$$
then
$$\|\psi\|_{C^{2k,k}_{x,t}(B_{\mu_0(0)R(0)}\times (0,T))}\lesssim T^{\epsilon}$$
for some $0<\epsilon<1$.
\end{itemize}
\end{remark}

\medskip

\medskip


\section{The reduced equation for $\alpha(t)$}\label{sec-redu}

\medskip

From the linear theory for the inner problem \eqref{eqn-inner} in Section \ref{sec-inner}, orthogonality condition is required to guarantee the existence of solutions with sufficient space-time decay. In this section, we will adjust the scaling parameter $\mu(t)$ by such orthogonality condition.

\medskip

Recall that the slow decaying kernel for the linearized operator
\[
Z_0(y) = -\big[ y \cdot \nn w + \frac w 2 \big]
= \frac{3^{\frac 1 4}}{2} \frac{\abs{y}^2-1}{(1+\abs{y}^2)^{\frac 3 2}}.
\]
and
\[
\begin{split}
\mu(t) =\ & \mu_0(t) (1+\Lambda(t))^2,\\
\alpha(t) =\ & (-3^{\frac 1 4}) \mu_0^{-\frac 1 2}(t) (\mu_0(t) \Lambda(t))'.
\end{split}
\]
Since $\mu_0(t)\sim (T-t)^{2k}$, our aim is to look for
$$
\alpha(t)\sim (T-t)^{k-1}\Lambda(t), \quad k\in\Z_+.
$$
where $\Lambda(t)\to 0$ as $t\to T$.

From the linear theory in Section \ref{sec-inner}, the inner solution can be found in suitable topology if the following  orthogonality condition is satisfied
\begin{equation}\label{orthogonality}
\int_{B_{2R}} \mathcal H(\phi,\psi,\alpha) Z_0(y) dy =0 ~\mbox{  for all  }~ t\in(0,T),
\end{equation}
where $\mathcal H(\phi,\psi,\alpha)$ is defined in \eqref{def-mathcalH}. Define
\begin{equation}
\|h\|_{\delta}:=\sup_{t\in(0,T)} |(T-t)^{-\delta}h(t)|.
\end{equation}
It turns out that the reduced problem {orthogonality} is a problem involving the following nonlocal operator
$$
\int_0^t \frac{\alpha(s)}{(t-s)^{1/2}}ds,
$$
which is the nonlocal feature inherited from the slow decaying kernel $Z_0(y)$.


\medskip

\subsection{A linear theory for the reduced equation}

\medskip

Before we consider the reduced problem \eqref{orthogonality}, we first develop a key linear theory for the following problem
\begin{equation}\label{linear-redu}
\int_0^t \frac{\alpha(s)}{(t-s)^{1/2}}ds=\sum\limits_{j=1}^{k} c_j \mathcal B^{(j)}(0,t)+h(t),
\end{equation}
where $h(t)\in C^k_t(0,T)$,  and for $j=1,\dots,k$, $c_j$ are constants and $\mathcal B^{(j)}$ are smooth functions to be determined. We have the following lemma concerning the solvability of problem \eqref{linear-redu}, which enables us to solve $\alpha(t)$ with sufficiently fast decay in problem \eqref{orthogonality}.

\medskip

\begin{lemma}\label{lemma-linear-redu}
For problem \eqref{linear-redu}, if $h(t)\in C^k_t(0,T)$, then there exist constants $c_j$ and smooth functions $\mathcal B^{(j)}$ such that problem \eqref{linear-redu} has a solution satisfying
$$
\alpha(t)\sim (T-t)^{k-1}\Lambda(t), \quad k\in\Z_+,
$$
where $\Lambda(t)\to 0$ as $t\to T$.
\end{lemma}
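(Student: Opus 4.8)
The plan is to analyze the Abel-type integral equation
\[
\int_0^t \frac{\alpha(s)}{(t-s)^{1/2}}\diff s = g(t), \qquad g(t):=\sum_{j=1}^k c_j \mathcal B^{(j)}(0,t)+h(t),
\]
by first recalling the classical inversion formula for the half-order Abel operator. Writing $\beta(t)=\int_0^t \alpha(s)\diff s$, the left-hand side is (up to the constant $\Gamma(1/2)=\sqrt\pi$) the Caputo/Riemann--Liouville derivative of order $1/2$ of $\beta$, so the equation can be inverted as
\[
\alpha(t) = \frac{1}{\pi}\frac{\diff}{\diff t}\int_0^t \frac{g(s)}{(t-s)^{1/2}}\diff s
         = \frac{1}{\pi}\left[ \frac{g(0)}{t^{1/2}} + \int_0^t \frac{g'(s)}{(t-s)^{1/2}}\diff s\right],
\]
valid whenever $g\in C^1$. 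Iterating this representation (or rather, tracking how each derivative of $g$ contributes) shows that the asymptotic behavior of $\alpha$ near $t=T$ is governed by the Taylor expansion of $g$ at $t=T$: a term behaving like $(T-t)^m$ in $g$ produces, after the fractional differentiation, a term behaving like $(T-t)^{m-1/2}$, while the genuinely nonlocal part contributes lower-order tails controlled by the $C^k_t$ norm of $g$. The point of including the free functions $\mathcal B^{(j)}$ is that they are harmonic-profile correctors whose traces $\mathcal B^{(j)}(0,t)$ can be prescribed to be, say, $(T-t)^{j-1/2}$ (or an appropriate polynomial-in-$(T-t)$ profile), precisely so that the combination $\sum_j c_j\mathcal B^{(j)}(0,t)+h(t)$ can be made to vanish to order $k$ at $t=T$ after choosing the $c_j$ to cancel the first $k$ Taylor coefficients of $h$ at $T$.

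Concretely, I would proceed in the following steps. First, establish the inversion formula above as a rigorous identity on $(0,T)$ for $g\in C^1_t$, together with the mapping estimate: if $g$ vanishes to order $\ell$ at $t=T$ in the sense that $|g^{(i)}(t)|\lesssim (T-t)^{\ell-i}$ for $0\le i\le \ell$ (and $g\in C^k$, $\ell\le k$), then $|\alpha(t)|\lesssim (T-t)^{\ell-1/2}$, possibly with a logarithmic loss in borderline cases, plus a contribution from the behavior near $t=0$ which is harmless since we only care about $t\to T$. Second, choose the profiles $\mathcal B^{(j)}$: take $\mathcal B^{(j)}$ to be the bounded solution of the backward heat equation (or simply a fixed smooth caloric function) with $\mathcal B^{(j)}(0,t)$ equal to a prescribed smooth function of $(T-t)$ whose Taylor expansion at $T$ starts at order matching $j$; these exist because prescribing the value at the single point $x=0$ leaves ample freedom. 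Third, solve the $k\times k$ (triangular) linear system for $c_1,\dots,c_k$ so that $\sum_j c_j\mathcal B^{(j)}(0,t)+h(t)$ agrees with the residual profile $(T-t)^{k-1}\Lambda$; since $h\in C^k_t(0,T)$ its Taylor polynomial at $T$ of degree $k-1$ is well defined, and the triangular structure makes the system uniquely solvable. Fourth, apply the mapping estimate from Step 1 with $\ell=k$ to conclude $\alpha(t)\sim (T-t)^{k-1}\Lambda(t)$ with $\Lambda(t)\to0$, identifying $\Lambda$ as $(T-t)^{1/2}$ times the output of the inversion operator applied to the order-$k$ remainder.

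The main obstacle I expect is \emph{not} the algebra of choosing $c_j$ and $\mathcal B^{(j)}$ — that is the triangular linear system — but rather the sharp regularity/decay bookkeeping in Step 1: controlling the nonlocal integral $\int_0^t g'(s)(t-s)^{-1/2}\diff s$ uniformly as $t\to T$ when $g'$ itself only decays like $(T-t)^{\ell-1}$, and ruling out (or precisely locating) logarithmic corrections at the integer-order resonances where $m-1/2$ collides with the decay rates demanded elsewhere in the construction. This requires splitting the $s$-integral into $[0,t/2]$ and $[t/2,t]$, estimating the first piece by the global $C^0$ bound on $g'$ and the second by the local decay, and then differentiating in $t$ with care near the endpoint. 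A secondary technical point is verifying that the $\mathcal B^{(j)}$ chosen here are compatible with the spatial decay and smoothness requirements imposed when $\Phi_1$ is fed back into the outer and inner problems (i.e.\ that $\mathcal B^{(j)}(x,t)$, not just its trace at $x=0$, has acceptable size), but this is a matter of choosing the caloric extensions with Gaussian-type spatial profiles and does not affect the one-dimensional analysis above.
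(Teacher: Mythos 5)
There is a genuine gap, and it sits at the heart of your Step 1. The Abel operator in \eqref{linear-redu} is based at $t=0$, so its inverse $\alpha(t)=\frac1\pi\frac{\diff}{\diff t}\int_0^t (t-s)^{-1/2}g(s)\diff s$ maps local vanishing of $g$ at $t=0$ to local vanishing of $\alpha$ at $t=0$ — but \emph{not} at the right endpoint $t=T$. Your claimed mapping estimate ("if $g$ vanishes to order $\ell$ at $T$ then $|\alpha|\lesssim (T-t)^{\ell-1/2}$") is false: take $g(t)=(T-t)^k$; then
\begin{equation*}
\alpha(T)=\frac1\pi\Big[\frac{g(0)}{T^{1/2}}+\int_0^T (T-s)^{-1/2}g'(s)\diff s\Big]
=\frac{T^{k-\frac12}}{\pi}\Big(1-\frac{k}{k-\frac12}\Big)\neq 0 ,
\end{equation*}
so $\alpha$ does not even vanish at $T$ although $g$ vanishes there to order $k$. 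The point is that $\alpha(T)$ and its derivatives at $T$ are \emph{nonlocal} functionals of $g$ on all of $(0,T)$; you have the roles reversed when you say the nonlocal part contributes "lower-order tails" — it is precisely the nonlocal part that produces the obstruction. Consequently your Steps 2--4, which tune the $c_j$ to kill the Taylor coefficients of $g=\sum_j c_j\mathcal B^{(j)}(0,\cdot)+h$ itself at $T$, do not yield $\alpha=o((T-t)^{k-1})$.

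The paper's proof avoids this by composing the half-integral with itself (the identity \eqref{lemma-RL}), which converts \eqref{linear-redu} into $\int_0^t\alpha(s)\diff s=\frac{1}{\Gamma(1/2)^2}\int_0^t(t-s)^{-1/2}g(s)\diff s$, and then imposing the vanishing of the coefficients of $(T-t)^1,\dots,(T-t)^k$ in the Taylor expansion \emph{of the half-integral of $g$} at $t=T$ — $k$ nonlocal linear conditions. To satisfy them one needs correctors whose half-integrals are explicitly computable and have linearly independent expansions at $T$: hence the Gaussian caloric functions with traces $\frac18(\kappa_j t+\frac14)^{-3/2}$, whose half-integrals are $\frac{2t^{1/2}}{4\kappa_j t+1}$, and the inversion of the Cauchy-type matrix \eqref{invertmatrix} to produce combinations $\Upsilon^{(j)}$ vanishing to order $(T-t)^j$. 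Your secondary worry about compatibility of $\mathcal B^{(j)}(x,t)$ with the gluing scheme is legitimate but minor; also note that prescribing $\mathcal B^{(j)}(0,t)=(T-t)^{j-1/2}$ is not compatible with $\mathcal B^{(j)}$ being a smooth caloric function up to $t=T$. The essential missing idea is that the conditions to be hit by the $c_j$ live on the half-integral of $g$, not on $g$.
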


\medskip

\begin{proof}
In order to find $\alpha(t)$ with the above vanishing order, it suffices to show that
$$
\alpha(T)=\alpha'(T)=\alpha''(T)=\cdots=\alpha^{(k-1)}(T)=0.
$$
We shall choose $c_j$ and $\mathcal B^{(j)}$ using solutions for heat equations as building blocks.

We first find solutions $B_{j}(x,t)$ to the following heat equation
\begin{equation}\label{eqn-initial}
\left\{
\begin{aligned}
\partial_t B_{j} =\Delta B_{j}~&\mbox{ in }~\R^3\times(0,T)\\
B_{j}(\cdot,0)=B_{0,j}~&\mbox{ in }~\R^3\\
\end{aligned}
\right.
\end{equation}
where the decaying initial data $B_{0,j}$ will be chosen. Using Duhamel's formula in problem \eqref{eqn-initial}, we write
\begin{equation*}
B_j(0,t)=\int_{\R^3} e^{-\frac{|\xi|^2}{4t}} \frac{B_{0,j}(\xi)}{(4\pi t)^{3/2}} d\xi.
\end{equation*}
Let us choose the initial condition
$$
B_{0,j}(|x|)=e^{-\kappa_j |x|^2}.
$$
Then
\begin{equation*}
\begin{aligned}
B_j(0,t)=&~\frac{1}{(4\pi)^{3/2}}\int_{\R^3} t^{-3/2} e^{-\frac{|\xi|^2}{4t}} e^{-\kappa_j |\xi|^2}d\xi\\
=&~\frac{1}{(4\pi)^{3/2}}\int_{\R^3} t^{-3/2} e^{-|x|^2} \left(\kappa_j t+\frac14\right)^{-3/2} dx\\
=&~\frac18 \left(\kappa_j t+\frac14\right)^{-3/2}
\end{aligned}
\end{equation*}
and
\begin{equation*}
\begin{aligned}
\int_0^t \frac{B_j(0,s)}{(t-s)^{1/2}}ds=&~2\int_0^{t^{1/2}} B_j(0,t-u^2) du\\
=&~\frac14 \int_0^{t^{1/2}} \left(\kappa_j (t-u^2)+\frac14\right)^{-3/2}du\\
=&~2 \int_0^{t^{1/2}} \left(4\kappa_j (t-u^2)+1\right)^{-3/2}du\\
=&~\frac{2t^{1/2}}{4\kappa_j t+1}.\\
\end{aligned}
\end{equation*}
We consider the linear combination of the initial data
\begin{equation*}
\mathcal B_0^{(i)}(x)=\sum\limits_{j=1}^k \ell_j^{(i)} B_{0,j}(|x|)
\end{equation*}
so that the corresponding solution at $x=0$ is
\begin{equation*}
\mathcal B^{(i)}(0,t)=\sum\limits_{j=1}^k \ell_j^{(i)} \frac18 \left(\kappa_j t+\frac14\right)^{-3/2}
\end{equation*}
and
\begin{equation}\label{int-mathcalB}
\int_0^t \frac{\mathcal B^{(i)}(0,s)}{(t-s)^{1/2}}ds=\sum\limits_{j=1}^k \ell_j^{(i)} \frac{2t^{1/2}}{4\kappa_j t+1}.
\end{equation}
Rearranging the constants $\tilde \ell_j^{(i)}=\frac{\ell_j^{(i)}}{2a_j}$ and $\tilde \kappa_j=\frac{1}{4\kappa_j}$, we denote
\begin{equation}\label{def-Upsilon}
\Upsilon_j(t)=\frac{t^{1/2}}{t+\tilde \kappa_j},\quad \Upsilon^{(i)}(t)=\sum\limits_{j=1}^k \tilde \ell_j^{(i)} \frac{t^{1/2}}{t+\tilde \kappa_j}.
\end{equation}
By adjusting free parameters $\tilde \ell_j^{(i)}$ and $\tilde \kappa_j$, we can find solutions $\Upsilon^{(i)}(t)$ with vanishing order $(T-t)^i$ near $T$. Indeed, writing $\widehat{\Upsilon}^{(i)}(t)=\sum\limits_{j=1}^k \tilde \ell_j^{(i)} \frac{1}{t+\tilde \kappa_j}$, $\Upsilon^{(i)}(t)\sim (T-t)^i$ is equivalent to showing the invertibility of the following system
\begin{equation}\label{invertmatrix}
M v_{\ell}=e_i,
\end{equation}
where
\begin{equation*}
M=\begin{pmatrix}
\frac{1}{T+\tilde \kappa_1} & \frac{1}{T+\tilde \kappa_1} & \cdots & \frac{1}{T+\tilde \kappa_k}\\
-\frac{1}{(T+\tilde \kappa_1)^2} & -\frac{1}{(T+\tilde \kappa_2)^2}  & \cdots & -\frac{1}{(T+\tilde \kappa_k)^2}\\
\vdots & \vdots & \ddots & \vdots \\
\frac{(-1)^k}{(T+\tilde \kappa_1)^k} & \frac{(-1)^k }{(T+\tilde \kappa_2)^k}  & \cdots & \frac{(-1)^k}{(T+\tilde \kappa_k)^k}\\
\end{pmatrix},
\end{equation*}
\begin{equation*}
v_{\ell}=(\tilde \ell_1^{(i)},\tilde \ell_2^{(i)},\dots,\tilde \ell_k^{(i)})^T,\quad e_i=(0,\cdots,0,{\underbrace{1}_{i{\rm-th~ entry}}},0,\cdots,0)^T.
\end{equation*}
Letting $\tilde \kappa_1,\dots,\tilde \kappa_k$ be different from each other, it is obvious that system \eqref{invertmatrix} is invertible. So we construct the solutions $\Upsilon^{(i)}(t)$ with vanishing order $(T-t)^i$ near $T$. Choosing a linear combination of $k$ such functions and plugging them into the reduced equation, we obtain
\begin{equation*}
\int_0^t \frac{\alpha(s)}{(t-s)^{1/2}} ds=\sum\limits_{j=1}^{k} c_j \mathcal B^{(j)}(0,t)+h(t),
\end{equation*}
where
$c_j$ are free parameters in the initial data to be adjusted below.
Then from Lemma \ref{lemma-RL}, we have
\begin{equation}\label{eqn-alpha}
\int_0^t \alpha(s) ds=\frac{1}{(\Gamma(\frac12))^2}\left(\sum\limits_{j=1}^{k} c_j \int_0^t \frac{\mathcal B^{(j)}(0,s)}{(t-s)^{1/2}} ds+ \int_0^t \frac{h(s)}{(t-s)^{1/2}} ds\right).
\end{equation}
We write
\begin{equation*}
\int_0^t \frac{h(s)}{(t-s)^{1/2}} ds=d_0(T)+\sum\limits_{j=1}^{k} d_j (T-t)^j+{\rm h.o.t.}
\end{equation*}
We can choose $c_j$ such that problem \eqref{eqn-alpha} has solution $\alpha(t)=o((T-t)^{k-1})$. In other words, our aim is to show
\begin{equation}\label{vanishingcondition}
\alpha(T)=\alpha'(T)=\alpha''(T)=\dots=\alpha^{(k-1)}(T)=0.
\end{equation}
From \eqref{eqn-alpha}, \eqref{int-mathcalB} and \eqref{def-Upsilon}, we obtain
\begin{equation*}
\int_0^t \alpha(s) ds=\frac{1}{(\Gamma(\frac12))^2}\left( \sum\limits_{j=1}^{k} c_j \Upsilon^{(j)}(t)+\sum\limits_{j=1}^{k} d_j (T-t)^j\right)+{\rm h.o.t.}
\end{equation*}
By the vanishing order of $\Upsilon^{(j)}(t)$ and choosing $c_j=d_j$, we conclude the validity of \eqref{vanishingcondition}. The proof is complete.
\end{proof}

\medskip


\medskip

\subsection{Reduced equation for $\alpha(t)$}

\medskip

By the orthogonality condition \eqref{orthogonality}, we directly compute
\begin{equation*}
\begin{split}
& \int\limits_{B_{2R}} 5\big[ \big(u_{in}(\mu_0 y, t(\tau) )
+ \Phi_1(\mu_0 y, t(\tau)) \big)^4
- \mu_0^{-2}w^4(y)\big] \mu_0^2 \phi(y,\tau) Z_0(y) \diff y \\
+\ & \int\limits_{B_{2R}} 5 \mu_0^{\frac 1 2}(1 + \Lambda)^{-4}
w^4(\frac y{(1+\Lambda)^2})
\psi(\mu_0 y,t(\tau) )  Z_0(y) \diff y \\
+\ & \int\limits_{B_{2R}} \mu_0 \mu_0'
\big[ \frac 1 2 \phi(y, \tau) + \nn_y \phi(y,\tau) \cdot y \big]
Z_0(y) \diff y \\
+\ & \int\limits_{B_{2R}}
\mu_0^{\frac52} \left[S(U_1) - \chi(\frac{\mu_0 y}{c_0(T-t)^{\frac12}})
\frac {\alpha(t)} { (\mu^2 + |\mu_0 y|^2 )^{\frac 1 2}  }\right]
Z_0(y) \diff y
\\
+\ & \int\limits_{B_{2R}} \mu_0^{\frac 5 2}\big[ \big( u_{in}(\mu_0 y, t(\tau))
+ \Phi_1(\mu_0 y, t(\tau)) \big)^5
- u_{in}^5(\mu_0 y, t(\tau)) \big]  Z_0(y) \diff y = 0.
\end{split}
\end{equation*}
Since
\[
u_{in}(x,t)
= \mu^{-\frac{1}{2}}w (\frac{x}{\mu})
+ 2\mu_0'\,\mu^{\frac 1 2} J(\frac{x}{\mu})
= \mu^{-\frac{1}{2}}w (\frac y {(1+\Lambda)^2})
+ 2\mu_0'\,\mu^{\frac 1 2} J(\frac y {(1+\Lambda)^2}),
\]
we obtain
\begin{equation*}
\begin{split}
& \int\limits_{B_{2R}} 5\big[ \big( \mu^{-\frac{1}{2}}
w (\frac y {(1+\Lambda)^2})
+ 2\mu_0'\,\mu^{\frac 1 2} J(\frac y {(1+\Lambda)^2} )
+ \Phi_1(\mu_0 y, t(\tau)) \big)^4
- \mu_0^{-2}w^4(y)\big] \mu_0^2 \phi(y,\tau) Z_0(y) \diff y \\
+\ & \int\limits_{B_{2R}} 5 \mu_0^{\frac 1 2}(1 + \Lambda)^{-4}
w^4(\frac y{(1+\Lambda)^2})
\psi(\mu_0 y,t(\tau) )  Z_0(y) \diff y \\
+\ & \int\limits_{B_{2R}} \mu_0 \mu_0'
\big[ \frac 1 2 \phi(y, \tau) + \nn_y \phi(y,\tau) \cdot y \big]
Z_0(y) \diff y \\
+\ & \int\limits_{B_{2R}}
\mu_0^{\frac52} \left[S(U_1) - \chi(\frac{\mu_0 y}{c_0(T-t)^{\frac12}})
\frac {\alpha(t)} { (\mu^2 + |\mu_0 y|^2 )^{\frac 1 2}  }\right]
Z_0(y) \diff y
\\
+\ & \int\limits_{B_{2R}} \mu_0^{\frac 5 2}
\big[ \big( u_{in} + \Phi_1(\mu_0 y, t(\tau)) \big)^5
-  u_{in}^5 \big]  Z_0(y) \diff y = 0,
\end{split}
\end{equation*}
where we expand
\[
\begin{split}
& \big( u_{in} + \Phi_1 ( \mu_0 y, t(\tau) ) \big)^5  -  u_{in}^5 \\
= \ & 5 u_{in}^4 \Phi_1 (\mu_0 y, t(\tau)) + \Phi_1^2 (\mu_0 y, t(\tau))      \int_0^1 20 \big( u_{in} + s \Phi_1 (\mu_0 y, t(\tau)) \big)^3 (1-s)
\diff s \\
= \ & 5(\mu_0^{ - \frac 1 2} w(y))^4 \Phi_1 (\mu_0 y, t(\tau))
+ 5 \big[ u_{in}^4 - (\mu_0^{ - \frac 1 2} w(y))^4 \big]
\Phi_1 (\mu_0 y, t(\tau)) \\
\ \ &       + \Phi_1^2 (\mu_0 y, t(\tau))
\int_0^1 20 \big( u_{in} + s \Phi_1 (\mu_0 y, t(\tau)) \big)^3 (1-s)
\diff s  \\
= \ &  5(\mu_0^{ - \frac 1 2} w(y))^4 \Phi_1 (0, t(\tau)) +
5(\mu_0^{ - \frac 1 2} w(y))^4
\big( \Phi_1 (\mu_0 y, t(\tau)) - \Phi_1 (0, t(\tau)) \big) \\
& + 5 \big[ u_{in}^4 - (\mu_0^{ - \frac 1 2} w(y))^4 \big]
\Phi_1 (\mu_0 y, t(\tau))
+ \Phi_1^2 (\mu_0 y, t(\tau))
\int_0^1 20 \big( u_{in} + s \Phi_1 (\mu_0 y, t(\tau)) \big)^3 (1-s)
\diff s
\end{split}
\]
and we shall prove the leading term is
\[
5(\mu_0^{ - \frac 1 2} w(y))^4 \Phi_1 (0, t(\tau)) ,
\]
and all other terms have sufficiently fast decay. Indeed, we simplify the above equation and evaluate
\begin{equation*}
\begin{split}
& \int\limits_{B_{2R}} 5\big[ \big( \mu^{-\frac{1}{2}}
w (\frac y {(1+\Lambda)^2})
+ 2\mu_0'\,\mu^{\frac 1 2} J(\frac y {(1+\Lambda)^2} )
+ \Phi_1(\mu_0 y, t(\tau)) \big)^4
- \mu_0^{-2}w^4(y)\big] \mu_0^{\frac 3 2} \phi(y,\tau) Z_0(y) \diff y \\
+\ & \int\limits_{B_{2R}} 5 w^4(y)
\psi(\mu_0 y,t(\tau) )  Z_0(y) \diff y
+\  \int\limits_{B_{2R}} 5 [ (1 + \Lambda)^{-4}
w^4(\frac y{(1+\Lambda)^2}) -w^4(y) ]
\psi(\mu_0 y,t(\tau) )  Z_0(y) \diff y \\
+\ & \int\limits_{B_{2R}} \mu_0^{\frac 1 2}  \mu_0'
\big[ \frac 1 2 \phi(y, \tau) + \nn_y \phi(y,\tau) \cdot y \big]
Z_0(y) \diff y \\
+\ & \int\limits_{B_{2R}}
\mu_0^2 \left[S(U_1) - \chi(\frac{\mu_0 y}{c_0(T-t)^{\frac12}})
\frac {\alpha(t)} { (\mu^2 + |\mu_0 y|^2 )^{\frac 1 2}  }\right]
Z_0(y) \diff y
\\
+\ &  \int\limits_{B_{2R}} 5 w^4(y)
\Phi_1 (0, t(\tau))  Z_0(y) \diff y\\
+\ & \int\limits_{B_{2R}} \mu_0^2
\big[ \big( u_{in} + \Phi_1(\mu_0 y, t(\tau)) \big)^5
-  u_{in}^5 - 5(\mu_0^{ - \frac 1 2} w(y))^4 \Phi_1 (0, t(\tau)) \big]  Z_0(y) \diff y = 0.
\end{split}
\end{equation*}
Recall that
\[
Z_0(y) : = -\big[ y \cdot \nn w + \frac w 2 \big]
= \frac{3^{\frac 1 4}}{2} \frac{\abs{y}^2-1}{(1+\abs{y}^2)^{\frac 3 2}},
\]
\[
w(y) = 3^{\frac 14}(1+|y|^2)^{-\frac{1}{2}}.
\]
So
\[
\begin{split}
\int\limits_{B_{2R}} 5 w^4(y) Z_0(y) \diff y =\ & \frac 5 2 3^{\frac 5 4} \int_0^{2R}
(1 + r^2)^{-3.5}(r^2 - 1) 4\pi r^2 \diff r \\
=\ & 10\pi 3^{\frac 5 4} \frac{r^3(r^2-5)}{15(r^2+1)^{\frac 5 2}} \Big|_0^{2R} \\
=\ &  10\pi 3^{\frac 5 4} (\frac 1{15} + O(R^{-2})).
\end{split}
\]
Next we consider the nonlocal term
\begin{equation*}
\begin{split}
\Phi_1(0,t)
= \ & \sum\limits_{j=1}^{k} c_j \mathcal B^{(j)}(0,t)
+ \int_0^t \int\limits_{\RR^3}
(\frac 1{2\sqrt{\pi}})^3(t-s)^{-\frac 3 2}
e^{-\frac{\abs{\xi}^2}{4(t-s)}}
\chi(\frac{\xi}{c_0(T-s)^{\frac 1 2}   })\frac {\alpha(s)} { \mu(s) + \abs{\xi} } \diff \xi \diff s \\
= \ & \sum\limits_{j=1}^{k} c_j \mathcal B^{(j)}(0,t)
+ \int_0^t \int\limits_{\RR^3}
(\frac 1{2\sqrt{\pi}})^3(t-s)^{-\frac 3 2}
e^{-\frac{\abs{\xi}^2}{4(t-s)}}
\chi(\frac{\xi}{c_0(T-s)^{\frac 1 2}    })\frac {\alpha(s)} { \abs{\xi} } \diff \xi \diff s \\
& + \int_0^t \int\limits_{\RR^3}
(\frac 1{2\sqrt{\pi}})^3(t-s)^{-\frac 3 2}
e^{ -\frac{\abs{\xi}^2}{4(t-s)} }
\chi(\frac{\xi}{c_0(T-s)^{\frac 1 2}    }) \alpha(s)
\big( \frac 1 { \mu(s) + \abs{\xi} } - \frac 1 { \abs{\xi} } \big)
\diff \xi \diff s.\\
\end{split}
\end{equation*}
Notice
\[
\begin{split}
& \int_0^t \int\limits_{\RR^3}
(\frac 1{2\sqrt{\pi}})^3(t-s)^{-\frac 3 2}
e^{-\frac{\abs{\xi}^2}{4(t-s)}}
\chi(\frac{\xi}{c_0(T-s)^{\frac 1 2} } )\frac {\alpha(s)} { \abs{\xi} } \diff \xi \diff s \\
= \ & \int_0^t \int_0^{c_0(T-s)^{\frac 1 2}}
(\frac 1{2\sqrt{\pi}})^3 (t-s)^{-\frac 3 2}
e^{-\frac{r^2} {4(t-s)} }
\frac {\alpha(s)} {r}
4\pi r^2
\diff r \diff s \\
= \ & 4^{-1}\pi^{-\frac 1 2}
\int_0^t \int_0^{c_0(T-s)^{\frac 1 2}  }
(t-s)^{-\frac 3 2} e^{-\frac{r^2} {4(t-s)} } \alpha(s)
\diff (r^2) \diff s \\
= \ & 4^{-1} \pi^{-\frac 1 2}
\int_0^t  (t-s)^{-\frac 3 2} \alpha(s)
(-4)(t - s ) e^{ - \frac r{4(t-s )} } \Big|_{r=0}^{c_0^2 (T-s) }
\diff s \\
= \ & \pi^{-\frac 1 2}
\int_0^t  (t-s)^{-\frac 1 2} \alpha(s)
[ 1 - e^{ - \frac {c_0^2 (T-s)  }{4(t-s)} }  ] \diff s.
\end{split}
\]
Then
\begin{equation*}
\begin{split}
\Phi_1(0,t)
= \ & \sum\limits_{j=1}^{k} c_j \mathcal B^{(j)}(0,t) + \pi^{-\frac 1 2}
\int_0^t  (t-s)^{-\frac 1 2} \alpha(s)
[ 1 - e^{ - \frac {c_0^2 (T-s)  }{4(t-s)} }  ] \diff s \\
& + \int_0^t \int\limits_{\RR^3}
(\frac 1{2\sqrt{\pi}})^3(t-s)^{-\frac 3 2}
e^{-\frac{\abs{\xi}^2}{4(t-s)}}
\chi(\frac{\xi}{c_0(T-s)^{\frac 1 2}    })  \alpha(s)
\big( \frac 1 { \mu(s) + \abs{\xi} } - \frac 1 { \abs{\xi} } \big)
\diff \xi \diff s \\
\end{split}
\end{equation*}
and thus
\begin{equation*}
\begin{split}
& \int\limits_{B_{2R}} 5\big[ \big( \mu^{-\frac{1}{2}}
w (\frac y {(1+\Lambda)^2})
+ 2\mu_0'\,\mu^{\frac 1 2} J(\frac y {(1+\Lambda)^2} )
+ \Phi_1(\mu_0 y, t(\tau)) \big)^4
- \mu_0^{-2}w^4(y)\big] \mu_0^{\frac 3 2} \phi(y,\tau) Z_0(y) \diff y \\
+\ & \int\limits_{B_{2R}} 5 w^4(y)
\psi(\mu_0 y,t(\tau) )  Z_0(y) \diff y
+\  \int\limits_{B_{2R}} 5 [ (1 + \Lambda)^{-4}
w^4(\frac y{(1+\Lambda)^2}) -w^4(y) ]
\psi(\mu_0 y,t(\tau) )  Z_0(y) \diff y \\
+\ & \int\limits_{B_{2R}} \mu_0^{\frac 1 2} \mu_0'
\big[ \frac 1 2 \phi(y, \tau) + \nn_y \phi(y,\tau) \cdot y \big]
Z_0(y) \diff y \\
+\ & \int\limits_{B_{2R}}
\mu_0^2 \left[S(U_1) - \chi(\frac{\mu_0 y}{c_0(T-t)^{\frac12}})
\frac {\alpha(t)} { (\mu^2 + |\mu_0 y|^2 )^{\frac 1 2}  }\right]
Z_0(y) \diff y
\\
+\ & 10\pi 3^{\frac 5 4} (\frac 1{15} + O(R^{-2}))
(\sum\limits_{j=1}^{k} c_j \mathcal B^{(j)}(0,t)  ) \\
+\ &  10\pi 3^{\frac 5 4} (\frac 1{15} + O(R^{-2}))
\pi^{-\frac 1 2}
\int_0^t  (t-s)^{-\frac 1 2} \alpha(s)
[ 1 - e^{ - \frac {c_0^2 (T-s)  }{4(t-s)} }   ] \diff s \\
+\ &  10\pi 3^{\frac 5 4} (\frac 1{15} + O(R^{-2}))
\int_0^t \int\limits_{\RR^3}
(\frac 1{2\sqrt{\pi}})^3(t-s)^{-\frac 3 2}
e^{-\frac{\abs{\xi}^2}{4(t-s)}}
\chi(\frac{\xi}{c_0(T-s)^{\frac 1 2}    })
\big( \frac 1 { \mu(s) + \abs{\xi} } - \frac 1 { \abs{\xi} } \big)
\diff \xi \diff s \\
+\ & \int\limits_{B_{2R}} \mu_0^2
\big[ \big( u_{in} + \Phi_1(\mu_0 y, t(\tau)) \big)^5
-  u_{in}^5 - 5(\mu_0^{ - \frac 1 2} w(y))^4 \Phi_1 (0, t(\tau)) \big]  Z_0(y) \diff y = 0.
\end{split}
\end{equation*}

The ansatz for the parameter function is
\[
\alpha(t) := (-3^{\frac 1 4}) \mu_0^{-\frac 1 2} (\mu_0 \Lambda(t))'
\rightarrow 0 \quad \mbox{as} \quad t \rightarrow T
\]
which is possible provided
\[
\alpha(t) \sim (T-t)^{-k} (T-t)^{2k} \Lambda(t) (T-t)^{-1}
\sim (T-t)^{k-1} \Lambda(t).
\]

Next we compute
\begin{equation}\label{orthogonalityeqn}
\begin{split}
&
\int_0^t  (t-s)^{-\frac 1 2} \alpha(s) \diff s
\\
= \ &
\pi^{\frac 1 2}
\sum\limits_{j=1}^{k} c_j \mathcal B^{(j)}(0,t)
+ O(R^{-2}) \pi^{\frac 1 2} \sum\limits_{j=1}^{k} c_j \mathcal B^{(j)}(0,t)
 + \int_0^t  (t-s)^{-\frac 1 2} \alpha(s)
e^{ - \frac {c_0^2 (T-s)  }{4(t-s)} }   \diff s \\
& - \int\limits_{B_{2R}}
\frac 5{2 \pi^{\frac 1 2} 3^{\frac 1 4}}
\big[ \big( \mu^{-\frac{1}{2}}
w (\frac y {(1+\Lambda)^2})
+ 2\mu_0'\,\mu^{\frac 1 2} J(\frac y {(1+\Lambda)^2} )
+ \Phi_1(\mu_0 y, t(\tau)) \big)^4
- \mu_0^{-2}w^4(y)\big] \\
& \ \ \ \ \  \times \mu_0^{\frac 3 2} \phi(y,\tau) Z_0(y) \diff y  - \int\limits_{B_{2R}}
\frac 5{2 \pi^{\frac 1 2} 3^{\frac 1 4}}
w^4(y)
\psi(\mu_0 y,t(\tau) )  Z_0(y) \diff y \\
& - \int\limits_{B_{2R}}
\frac 5{2 \pi^{\frac 1 2} 3^{\frac 1 4}}
[ (1 + \Lambda)^{-4}
w^4(\frac y{(1+\Lambda)^2}) -w^4(y) ]
\psi(\mu_0 y,t(\tau) )  Z_0(y) \diff y \\
& -  \int\limits_{B_{2R}}
\frac 1{2 \pi^{\frac 1 2} 3^{\frac 1 4}}
\mu_0^{\frac 1 2} \mu_0'
\big[ \frac 1 2 \phi(y, \tau) + \nn_y \phi(y,\tau) \cdot y \big]
Z_0(y) \diff y \\
& + \int\limits_{B_{2R}}
\frac 1{2 \pi^{\frac 1 2} 3^{\frac 1 4}}
\mu_0^2 \left[S(U_1) - \chi(\frac{\mu_0 y}{c_0(T-t)^{\frac12}})
\frac {\alpha(t)} { (\mu^2 + |\mu_0 y|^2 )^{\frac 1 2}  }\right]
Z_0(y) \diff y
\\
& -  O(R^{-2})
\int_0^t  (t-s)^{-\frac 1 2} \alpha(s)
[ 1 - e^{ - \frac {c_0^2 (T-s)  }{4(t-s)} }   ] \diff s \\
& -  ( 1 + O(R^{-2})) \pi^{\frac 1 2}
\int_0^t \int\limits_{\RR^3}
(\frac 1{2\sqrt{\pi}})^3(t-s)^{-\frac 3 2}
e^{-\frac{\abs{\xi}^2}{4(t-s)}}
\chi(\frac{\xi}{c_0(T-s)^{\frac 1 2}    })\alpha(s)
\big( \frac 1 { \mu(s) + \abs{\xi} } - \frac 1 { \abs{\xi} } \big)
\diff \xi \diff s \\
& - \int\limits_{B_{2R}}
\frac{1}{2 \pi^{\frac 1 2} 3^{\frac 1 4} }
\mu_0^2 \big[ \big( u_{in} + \Phi_1(\mu_0 y, t(\tau)) \big)^5
-  u_{in}^5 - 5(\mu_0^{ - \frac 1 2} w(y))^4 \Phi_1 (0, t(\tau)) \big]  Z_0(y) \diff y.
\end{split}
\end{equation}
From the theory on Riemann-Liouville fractional differential operator (see \cite{bookRL} for instance), we have
\begin{equation}\label{lemma-RL}
\begin{split}
& \frac{1}{\Gamma(\frac 1 2)}
\int_0^t (t-y )^{ - \frac 1 2}
\frac{1}{\Gamma(\frac 1 2)}
\int_0^y (y-s)^{- \frac 1 2} \alpha(s) \diff s \diff y \\
=\ &  \frac{1}{\Gamma(\frac 1 2)}
\int_0^{t} (t-y)^{ - \frac 1 2}
\frac{1}{\Gamma(\frac 1 2)}
\int_0^t \chi[s \le y] (y-s)^{- \frac 1 2} \alpha(s) \diff s \diff y \\
=\ & \frac{1}{(\Gamma(\frac 1 2) )^2}
\int_0^t \int_s^t
(t - s)^{-1} (\frac{t - y}{t - s})^{-\frac 1 2}
(\frac{y-s}{t - s})^{-\frac 1 2} \alpha(s)  \diff y \diff s \\
=\ & \int_0^t \alpha(s) \diff s.
\end{split}
\end{equation}
Therefore, we obtain
\begin{equation*}
\begin{split}
&\int_0^t \alpha(s) \diff s \\
=\ &
\frac{1}{\Gamma^2 (\frac 1 2)} \int_0^t (t-a)^{-\frac 1 2}
\int_0^a  (a-s)^{-\frac 1 2} \alpha(s)  \diff s \diff a \\
= \
&
\frac{1}{\Gamma (\frac 1 2)}
\sum\limits_{j=1}^{k} c_j
\int_0^t (t-a)^{-\frac 1 2}
\mathcal B^{(j)}(0,a) \diff a
 + \frac{1}{\Gamma (\frac 1 2)}
\sum\limits_{j=1}^{k} c_j
\int_0^t (t-a)^{-\frac 1 2}
O(R^{-2}(a)  ) \mathcal B^{(j)}(0,a) \diff a
\\
&
+ \frac{1}{\Gamma^2 (\frac 1 2)} \int_0^t (t-a)^{-\frac 1 2}
\int_0^a  (a-s)^{-\frac 1 2} \alpha(s)
e^{ - \frac {c_0^2 (T-s)  }{4(a-s)} }   \diff s \diff a \\
&
- \frac 5 {2 \pi^{\frac 3 2} 3^{\frac 1 4} } \int_0^t (t-a)^{-\frac 1 2}
\int\limits_{B_{2R(a)  }}
\Big[
\big( \mu(a)^{-\frac{1}{2}}
w (\frac y {(1+\Lambda (a) )^2})
+ 2\mu'_0 (a) \mu^{\frac 1 2} (a) J(\frac y {(1+\Lambda (a) )^2} )\\
&
\qquad\qquad\qquad\qquad\qquad+ \Phi_1(\mu_0 (a) y, a) \big)^4
- \mu_0^{-2} (a) w^4(y)\Big]  \times \mu_0^{\frac 3 2}(a) \phi(y,a) Z_0(y) \diff y \diff a \\
&
- \frac 5 {2 \pi^{\frac 3 2} 3^{\frac 1 4} }
\int_0^t (t-a)^{-\frac 1 2}
\int\limits_{B_{2R(a) } }  w^4(y)
\psi(\mu_0(a) y, a )  Z_0(y) \diff y \diff a \\
&
-  \frac 5 {2 \pi^{\frac 3 2} 3^{\frac 1 4} } \int_0^t (t-a)^{-\frac 1 2}
\int\limits_{B_{2R(a)  }} [ (1 + \Lambda(a) )^{-4}
w^4(\frac y{(1+\Lambda(a)  )^2}) -w^4(y) ]
\psi(\mu_0(a) y,a )  Z_0(y) \diff y \diff a  \\
& -  \frac 1 {2 \pi^{\frac 3 2} 3^{\frac 1 4} }
\int_0^t (t-a)^{-\frac 1 2}
\int\limits_{B_{2R(a)   }} \mu_0^{\frac 1 2}(a) \mu'_0(a)
\big[ \frac 1 2 \phi(y, a) + \nn_y \phi(y,a) \cdot y \big]
Z_0(y) \diff y \diff a \\
& +
\frac 1 {2 \pi^{\frac 3 2} 3^{\frac 1 4} }
\int_0^t (t-a)^{-\frac 1 2}
\int\limits_{B_{2R(a) }  }
\mu_0^2 (a) \Bigg[S(U_1)(\mu_0(a)y , a ) \\
&\qquad\qquad\qquad\qquad\qquad \qquad
- \chi(\frac{\mu_0 (a) y}{c_0(T-a)^{\frac12}})
\frac {\alpha(a)} { (\mu^2 (a) + |\mu_0 (a) y|^2 )^{\frac 1 2}  } \Bigg]
Z_0(y) \diff y \diff a
\\
&
- \frac{1}{\pi } \int_0^t (t-a)^{-\frac 1 2}
O(R^{-2} (a))
\int_0^a  (a-s)^{-\frac 1 2} \alpha(s)
[ 1 - e^{ - \frac {c_0^2 (T-s)  }{4(a-s)} }   ] \diff s \diff a
\\
&
-   \frac{1}{\pi^{\frac 1 2} } \int_0^t (t-a)^{-\frac 1 2}
( 1 + O(R^{-2}(a)  ))
\int_0^a \int\limits_{\RR^3}
(\frac 1 {2\sqrt{\pi}})^3  (a-s)^{-\frac 3 2}
e^{-\frac{ |\xi|^2 } {4(a-s)}}
\chi(\frac{\xi}{c_0(T-s)^{\frac 1 2}    })\\
&\qquad\qquad\qquad\qquad\qquad\qquad\qquad\times
\alpha(s)
\big( \frac 1 { \mu(s) + \abs{\xi} } - \frac 1 { \abs{\xi} } \big)
\diff \xi \diff s \diff a
\\
&
- \frac 1 {2 \pi^{\frac 3 2} 3^{\frac 1 4} }
\int_0^t (t-a)^{-\frac 1 2}
\int\limits_{B_{2R(a)  }}
\mu_0^2 (a) \Big[ \big( u_{in}(y,a) + \Phi_1(\mu_0(a) y, a) \big)^5
-  u_{in}^5(y,a)\\
&\qquad\qquad\qquad\qquad\qquad\qquad\qquad
 - 5(\mu_0^{ - \frac 1 2}(a) w(y))^4 \Phi_1 (0, a) \Big]  Z_0(y) \diff y \diff a.
\end{split}
\end{equation*}

Changing the variable $(t-s)^{1/2}=u$, we have
\begin{equation}\label{alphaalphaalpha}
\begin{split}
&
\int_0^t \alpha(s) \diff s \\
= \
&
\frac{1}{\Gamma (\frac 1 2)}
\sum\limits_{j=1}^{k} c_j
\int_0^t (t-a)^{-\frac 1 2}
\mathcal B^{(j)}(0,a) \diff a
 + \frac{2}{\Gamma (\frac 1 2)}
\sum\limits_{j=1}^{k} c_j
\int_0^{t^{\frac 1 2}}
O(R^{-2}(t-u^2)  ) \mathcal B^{(j)}(0,t-u^2) \diff u
\\
&
+ \frac{2}{\Gamma^2 (\frac 1 2)}
\int_0^{t^{\frac 1 2} }
\int_0^{t-u^2}  (t-u^2 -s)^{-\frac 1 2} \alpha(s)
e^{ - \frac {c_0^2 (T-s)  }{4(t-u^2-s)} }   \diff s \diff u \\
&
- \frac {5} { \pi^{\frac 3 2} 3^{\frac 1 4} }
\int_0^{t^{\frac 1 2} }
\int\limits_{B_{2R(t-u^2 )  }}
\Big[
\big( \mu ^{-\frac{1}{2}} (t-u^2)
w (\frac y {(1+\Lambda (t-u^2) )^2})\\
&
+ 2\mu'_0 (t-u^2) \mu^{\frac 1 2} (t-u^2) J(\frac y {(1+\Lambda (t-u^2) )^2} )
 + \Phi_1(\mu_0 (t-u^2) \  y, t-u^2) \big)^4
- \mu_0^{-2} (t-u^2) w^4(y) \Big] \\
&
\qquad\qquad\qquad\qquad
\times \mu_0^{\frac 3 2}(t-u^2) \phi (y,t-u^2) Z_0(y) \diff y \diff u \\
&
- \frac {5} { \pi^{\frac 3 2} 3^{\frac 1 4} }
\int_0^{t^{\frac 1 2}}
\int\limits_{B_{2R(t-u^2) } }  w^4(y)
\psi(\mu_0(t-u^2) y, t-u^2 )  Z_0(y) \diff y \diff u \\
&
-  \frac 5 { \pi^{\frac 3 2} 3^{\frac 1 4} } \int_0^{t^{\frac 1 2}}
\int\limits_{B_{2R(t-u^2)  }} \Big[ (1 + \Lambda(t-u^2) )^{-4}
w^4(\frac y{(1+\Lambda(t-u^2)  )^2})
 -w^4(y) \Big]\\
 &\qquad \qquad\qquad\qquad
 \times
\psi(\mu_0(t-u^2) y,t-u^2 )  Z_0(y) \diff y \diff u  \\
& -  \frac 1 { \pi^{\frac 3 2} 3^{\frac 1 4} }
\int_0^{t^{\frac 1 2}}
\int\limits_{B_{2R(t-u^2)   }} \mu_0^{\frac 1 2}(t-u^2) \mu'_0(t-u^2)
\big[ \frac 1 2 \phi(y, t-u^2) + \nn_y \phi(y,t-u^2) \cdot y \big]
Z_0(y) \diff y \diff u \\
& +
\frac 1 { \pi^{\frac 3 2} 3^{\frac 1 4} }
\int_0^{t^{\frac 1 2}}
\int\limits_{B_{2R(t-u^2) }  }
\mu_0^2 (t-u^2) \times
\Bigg[ S(U_1)(\mu_0(t-u^2) \ y , t-u^2 ) \\
&\qquad\qquad\qquad\qquad
- \chi(\frac{\mu_0 (t-u^2) y}{c_0(T-t + u^2)^{\frac12}})
\frac {\alpha(t-u^2)} { (\mu^2 (t-u^2) + |\mu_0 (t-u^2) y|^2 )^{\frac 1 2}  } \Bigg]
Z_0(y) \diff y \diff u
\\
&
- \frac{2}{\pi } \int_0^{t^{\frac 1 2}}
O(R^{-2} (t-u^2))
\int_0^{t-u^2}  (t-u^2-s)^{-\frac 1 2} \alpha(s)
[ 1 - e^{ - \frac {c_0^2 (T-s)  }{4(t-u^2-s)} }   ] \diff s \diff u
\\
&
-  \frac{2}{\pi^{\frac 1 2} } \int_0^{t^{\frac 1 2}}
( 1 + O(R^{-2}(t-u^2)  ))  \\
& \times
\int_0^{t-u^2} \int\limits_{\RR^3}
(\frac 1 {2\sqrt{\pi}})^3  (t-u^2-s)^{-\frac 3 2}
e^{-\frac{ |\xi|^2 } {4(t-u^2-s)}}
\chi(\frac{\xi}{c_0(T-s)^{\frac 1 2}    })\alpha(s)
\big( \frac 1 { \mu(s) + \abs{\xi} } - \frac 1 { \abs{\xi} } \big)
\diff \xi \diff s \diff u
\\
&
- \frac 1 { \pi^{\frac 3 2} 3^{\frac 1 4} }
\int_0^{t^{\frac 1 2}}
\int\limits_{B_{2R(t-u^2)  }}
\mu_0^2 (t-u^2)
\times
\Big[ \big( u_{in}(y,t-u^2) + \Phi_1(\mu_0(t-u^2) y, t-u^2) \big)^5\\
&\qquad\qquad\qquad\qquad\qquad
-  u_{in}^5(y,t-u^2) - 5(\mu_0^{ - \frac 1 2}(t-u^2) w(y))^4 \Phi_1 (0, t-u^2) \Big]  Z_0(y) \diff y \diff u.
\end{split}
\end{equation}

Now we check the differentiability of right hand side in \eqref{alphaalphaalpha} term by term. First, we consider
\[
\int_0^{t^{\frac 1 2}}
O(R^{-2}(t-u^2)  ) \mathcal B^{(j)}(0,t-u^2) \diff u.
\]
This term is smooth about $t$ near $T$ and we have the following estimate:
\[
\pp_t^{(i)}\int_0^{t^{\frac 1 2}}
O(R^{-2}(t-u^2)  ) \mathcal B^{(j)}(0,t-u^2) \diff u \Big|_{t=T}
\sim O(T^{\frac12 - i}).
\]
Next, we have
\[
\begin{aligned}
& \pp_t \left(\int_0^{t^{\frac 1 2} }
\int_0^{t-u^2}  (t-u^2 -s)^{-\frac 1 2} \alpha(s)
e^{ - \frac {c_0^2 (T-s)  }{4(t-u^2-s)} }   \diff s \diff u \right) \\
= \ &
\int_0^{t^{\frac 1 2} }
\int_0^{t-u^2}  \pp_t
\left( (t-u^2 -s)^{-\frac 1 2} \alpha(s)
e^{ - \frac {c_0^2 (T-s)  }{4(t-u^2-s)} }  \right)  \diff s \diff u.
\end{aligned}
\]
Similarly, we can check that this term is smooth about $t$ near $T$ and have the following estimate:
\[
\pp_t^{(i) } \left(\int_0^{t^{\frac 1 2} }
\int_0^{t-u^2}  (t-u^2 -s)^{-\frac 1 2} \alpha(s)
e^{ - \frac {c_0^2 (T-s)  }{4(t-u^2-s)} }   \diff s \diff u \right) \Big|_{t=T} \sim o(T^{\frac12 - i}).
\]
Then, the term
\[
\begin{aligned}
& \int_0^{t^{\frac 1 2} }
\int\limits_{B_{2R(t-u^2 )  }}
\Big[
\big( \mu ^{-\frac{1}{2}} (t-u^2)
w (\frac y {(1+\Lambda (t-u^2) )^2})
+ 2\mu'_0 (t-u^2) \mu^{\frac 1 2} (t-u^2) J(\frac y {(1+\Lambda (t-u^2) )^2} )
\\
& + \Phi_1(\mu_0 (t-u^2) \  y, t-u^2) \big)^4
- \mu_0^{-2} (t-u^2) w^4(y) \Big]
\times \mu_0^{\frac 3 2}(t-u^2) \phi (y,t-u^2) Z_0(y) \diff y \diff u
\end{aligned}
\]
is $C^{k}_t$ for $t$ near $T$ if we have $\phi \in C^{2k,k}_{x,t}$, and we have the following estimate

\[
\begin{aligned}
& \pp_t^{(i)} \int_0^{t^{\frac 1 2} }
\int\limits_{B_{2R(t-u^2 )  }}
\Big[
\big( \mu ^{-\frac{1}{2}} (t-u^2)
w (\frac y {(1+\Lambda (t-u^2) )^2})
+ 2\mu'_0 (t-u^2) \mu^{\frac 1 2} (t-u^2) J(\frac y {(1+\Lambda (t-u^2) )^2} )
\\
& + \Phi_1(\mu_0 (t-u^2) \  y, t-u^2) \big)^4
- \mu_0^{-2} (t-u^2) w^4(y) \Big] \\
&\qquad\qquad\qquad
\times \mu_0^{\frac 3 2}(t-u^2) \phi (y,t-u^2) Z_0(y) \diff y \diff u
\Big|_{t=T} = o(T^{\frac12 - i}).
\end{aligned}
\]
For the term
\[
\int_0^{t^{\frac 1 2}}
\int\limits_{B_{2R(t-u^2) } }  w^4(y)
\psi(\mu_0(t-u^2) y, t-u^2 )  Z_0(y) \diff y \diff u,
\]
it is $C^k $ for $t$ near $T$ if we have $\psi \in C^{2k,k}_{x,t}$. Further, we have the estimate
\[
\pp_t^{(i)} \int_0^{t^{\frac 1 2}}
\int\limits_{B_{2R(t-u^2) } }  w^4(y)
\psi(\mu_0(t-u^2) y, t-u^2 )  Z_0(y) \diff y \diff u \Big|_{t=T}
= o(T^{\frac12 -i}).
\]
Next, we consider
\[
 \int_0^{t^{\frac 1 2}}
\int\limits_{B_{2R(t-u^2)  }} \Big[ (1 + \Lambda(t-u^2) )^{-4}
w^4(\frac y{(1+\Lambda(t-u^2)  )^2})
-w^4(y) \Big]
\psi(\mu_0(t-u^2) y,t-u^2 )  Z_0(y) \diff y \diff u,
\]
which is $C^k_t$ if we assume $\psi \in C^{2k,k}_{x,t}$, and
\[
\begin{aligned}
& \pp_t^{(i)} \int_0^{t^{\frac 1 2}}
\int\limits_{B_{2R(t-u^2)  }} \Big[ (1 + \Lambda(t-u^2) )^{-4}
w^4(\frac y{(1+\Lambda(t-u^2)  )^2})
-w^4(y) \Big] \\
& \quad \quad \quad \quad \psi(\mu_0(t-u^2) y,t-u^2 )  Z_0(y) \diff y \diff u \Big|_{t = T}
\sim O(T^{\frac12 -i}).
\end{aligned}
\]
Next we consider
\[
\int_0^{t^{\frac 1 2}}
\int\limits_{B_{2R(t-u^2)   }} \mu_0^{\frac 1 2}(t-u^2) \mu'_0(t-u^2)
\big[ \frac 1 2 \phi(y, t-u^2) + \nn_y \phi(y,t-u^2) \cdot y \big]
Z_0(y) \diff y \diff u
\]
It is $C^k_t$ about $t$ near $T$ if we have $\phi \in C^{2k,k}_{x,t}$ and we have the following estimate
\[
\pp_t^{(i)} \int_0^{t^{\frac 1 2}}
\int\limits_{B_{2R(t-u^2)   }} \mu_0^{\frac 1 2}(t-u^2) \mu'_0(t-u^2)
\big[ \frac 1 2 \phi(y, t-u^2) + \nn_y \phi(y,t-u^2) \cdot y \big]
Z_0(y) \diff y \diff u \Big|_{t=T}
\sim o(T^{\frac 12 -i}).
\]
Next, for the term
\[
\begin{aligned}
&
\int_0^{t^{\frac 1 2}}
\int\limits_{B_{2R(t-u^2) }  }
\mu_0^2 (t-u^2) \times
\Bigg[ S(U_1)(\mu_0(t-u^2) \ y , t-u^2 ) \\
&
\qquad\qquad\qquad\qquad
- \chi(\frac{\mu_0 (t-u^2) y}{c_0(T-t + u^2)^{\frac12}})
\frac {\alpha(t-u^2)} { (\mu^2 (t-u^2) + |\mu_0 (t-u^2) y|^2 )^{\frac 1 2}  } \Bigg]
Z_0(y) \diff y \diff u,
\end{aligned}
\]
it is $C^k_t $ if we assume $\Lambda \in C^{k+1}_t $, and we have
\[
\begin{aligned}
&
\pp_t^{(i) } \int_0^{t^{\frac 1 2}}
\int\limits_{B_{2R(t-u^2) }  }
\mu_0^2 (t-u^2) \times
\Bigg[ S(U_1)(\mu_0(t-u^2) \ y , t-u^2 ) \\
&
\qquad\qquad
- \chi(\frac{\mu_0 (t-u^2) y}{c_0(T-t + u^2)^{\frac12}})
\frac {\alpha(t-u^2)} { (\mu^2 (t-u^2) + |\mu_0 (t-u^2) y|^2 )^{\frac 1 2}  } \Bigg]
Z_0(y) \diff y \diff u  \Big|_{t = T}
\sim o(T^{\frac12 -i} ).
\end{aligned}
\]
We next consider
\[
\int_0^{t^{\frac 1 2}}
O(R^{-2} (t-u^2))
\int_0^{t-u^2}  (t-u^2-s)^{-\frac 1 2} \alpha(s)
[ 1 - e^{ - \frac {c_0^2 (T-s)  }{4(t-u^2-s)} }   ] \diff s \diff u.
\]
It is smooth for $t$ near $T$ and we have the following estimate
\[
\pp_t^{(i)} \int_0^{t^{\frac 1 2}}
O(R^{-2} (t-u^2))
\int_0^{t-u^2}  (t-u^2-s)^{-\frac 1 2} \alpha(s)
[ 1 - e^{ - \frac {c_0^2 (T-s)  }{4(t-u^2-s)} }   ]
\diff s \diff u \Big|_{t=T}
\sim o(T^{\frac12 - i}).
\]
Next,
\[
\begin{aligned}
&
\int_0^{t^{\frac 1 2}}
( 1 + O(R^{-2}(t-u^2)  ))  \\
& \times
\int_0^{t-u^2} \int\limits_{\RR^3}
(\frac 1 {2\sqrt{\pi}})^3  (t-u^2-s)^{-\frac 3 2}
e^{-\frac{ |\xi|^2 } {4(t-u^2-s)}}
\chi(\frac{\xi}{c_0(T-s)^{\frac 1 2}    })\alpha(s)
\big( \frac 1 { \mu(s) + \abs{\xi} } - \frac 1 { \abs{\xi} } \big)
\diff \xi \diff s \diff u
\end{aligned}
\]
It is $C^k_t $ about $t$ near $T$ if we assume $\Lambda \in C^{k+1}_t $ and the following estimate holds
\[
\begin{aligned}
& \pp_t^{(i)}
\int_0^{t^{\frac 1 2}}
( 1 + O(R^{-2}(t-u^2)  ))
\int_0^{t-u^2} \int\limits_{\RR^3}
(\frac 1 {2\sqrt{\pi}})^3  (t-u^2-s)^{-\frac 3 2}
e^{-\frac{ |\xi|^2 } {4(t-u^2-s)}}
\chi(\frac{\xi}{c_0(T-s)^{\frac 1 2}    })\\
&\qquad\times
\alpha(s)
\big( \frac 1 { \mu(s) + \abs{\xi} } - \frac 1 { \abs{\xi} } \big)
\diff \xi \diff s \diff u \Big|_{t=T}
\sim o(T^{\frac12 - i}).
\end{aligned}
\]
Next,
\[
\begin{aligned}
&
\int_0^{t^{\frac 1 2}}
\int\limits_{B_{2R(t-u^2)  }}
\mu_0^2 (t-u^2)
\times
\Big[ \big( u_{in}(y,t-u^2) + \Phi_1(\mu_0(t-u^2) y, t-u^2) \big)^5\\
&\qquad\qquad\qquad\qquad\qquad
-  u_{in}^5(y,t-u^2) - 5(\mu_0^{ - \frac 1 2}(t-u^2) w(y))^4 \Phi_1 (0, t-u^2) \Big]  Z_0(y) \diff y \diff u
\end{aligned}
\]
It is $C^k_t$ about $t$ near $T$ if $\Lambda \in C^{k}_t$, and
\[
\begin{aligned}
&
\pp_t^{(i)}
\int_0^{t^{\frac 1 2}}
\int\limits_{B_{2R(t-u^2)  }}
\mu_0^2 (t-u^2)
\times
\Big[ \big( u_{in}(y,t-u^2) + \Phi_1(\mu_0(t-u^2) y, t-u^2) \big)^5\\
&\qquad\qquad\qquad\qquad
-  u_{in}^5(y,t-u^2) - 5(\mu_0^{ - \frac 1 2}(t-u^2) w(y))^4 \Phi_1 (0, t-u^2) \Big]  Z_0(y) \diff y \diff u \Big|_{t=T}
\sim o(T^{\frac 12 -i}).
\end{aligned}
\]

Then, we have the following equation
\begin{equation}
\alpha(t) =
\pp_t \left[
\frac{1}{\Gamma (\frac 1 2)}
\sum\limits_{j=1}^{k} c_j
\int_0^t (t-a)^{-\frac 1 2}
\mathcal B^{(j)}(0,a) \diff a
+ \frac{2}{\Gamma^2(\frac 1 2)}
\int_0^{t^{\frac 1 2} } h[\vec{c}, \Lambda, \psi, \phi](t - u^2) \diff u \right]
\end{equation}
and thus
\begin{equation}\label{Lameq}
\begin{aligned}
\Lambda (t)  = &
\mu_0^{-1}(t) \int_t^T
3^{-\frac 1 4} \mu_0^{\frac 1 2 } (b)
\pp_b \Bigg[
\frac{1}{\Gamma (\frac 1 2)}
\sum\limits_{j=1}^{k} c_j
\int_0^b (b-a)^{-\frac 1 2}
\mathcal B^{(j)}(0,a) \diff a \\
&\qquad\qquad\qquad\qquad\qquad\quad
+ \frac{2}{\Gamma^2(\frac 1 2)}
\int_0^{ b^{\frac 1 2 } } h[\vec{c}, \Lambda, \psi, \phi](b - u^2) \diff u \Bigg] \diff b.
\end{aligned}
\end{equation}
Define the space
\begin{equation}
\begin{aligned}
\mathcal{X}_{\Lambda} :=
& \Big\{ \Lambda(t) :
\| \Lambda(t) (T-t)^{-1 + \varepsilon} \|_{L^{\infty}} \le C_{0 \Lambda},\
\| \Lambda^{(1)} (t)(T-t)^{\varepsilon} \|_{L^{\infty} } \le C_{1\Lambda}, \
\cdots ,\\
&\qquad
\| \Lambda^{(k)} (t) (T-t)^{k-1+\varepsilon} \|_{L^{\infty}} \le C_{k \Lambda}, \Lambda(t) \in C^{k+1,\rho}(0,T-\delta), \foral \delta \in (0,T)
  \Big\}
  \end{aligned}
\end{equation}
where $C_{0 \Lambda}, C_{1 \Lambda},\dots, C_{k \Lambda}$ are some fixed small constants and $\varepsilon$ is a  small positive constant.

\begin{equation}
\mathcal{X}_{c} := \left\{ \vec{c}=(c_1, c_2,\dots, c_k) :
|c_j| \le C_c T^{\frac 1 2 - j - \varepsilon},\ j=1,2,\dots,  k \right\}
\end{equation}
where $C_c$ is a fixed constant.

\medskip

We aim to solve $\eqref{Lameq}$ for $\Lambda \in \mathcal{X}_{\Lambda}$, $\vec{c} \in \mathcal{X}_c$ by Schauder fixed point theorem.

For all $\Lambda_1 \in \mathcal{X}_{\Lambda}$ , $\vec{c}_1 \in \mathcal{X}_c$, we want to find the unique $\vec{c}_2 $ to get $\Lambda_2 \in \mathcal{X}_{\Lambda}$ satisfying the following equation:

\begin{equation}\label{Lam2}
\begin{aligned}
\Lambda_2 (t)  = &~
\mu_0^{-1}(t) \int_t^T
3^{-\frac 1 4} \mu_0^{\frac 1 2 } (b)
\pp_b \Bigg[
\frac{1}{\Gamma (\frac 1 2)}
\sum\limits_{j=1}^{k} c_{2j}
\int_0^b (b-a)^{-\frac 1 2}
\mathcal B^{(j)}(0,a) \diff a \\
&~\qquad\qquad\qquad\qquad\qquad\quad
+ \frac{2}{\Gamma^2(\frac 1 2)}
\int_0^{ b^{\frac 1 2 } }
h[\vec{c}_1, \Lambda_1, \psi[\Lambda_1], \phi[\Lambda_1] ] (b - u^2) \diff u \Bigg] \diff b.
\end{aligned}
\end{equation}
Since we expect $\Lambda_2 \in \mathcal{X}_{\lambda}$, we have to choose suitable $c_{2j}$ to cancel the lower power of $T-t$ on the right hand side:
\[
\begin{aligned}
c_{2j} = -\frac 2{\Gamma(\frac 1 2) j!}
\pp_b^j
\Big( \int_0^{ b^{\frac 1 2 } }
h[\vec{c}_1, \Lambda_1, \psi[\Lambda_1], \phi[\Lambda_1] ] (b - u^2) \diff u
\Big) \Big|_{b=T} = O(T^{\frac12 -j}).
\end{aligned}
\]
The higher derivatives of $h$ are well defined here since $\Lambda \in \mathcal{X}_{\Lambda}$ and $\alpha(t) \in C^{k}_t(0,T-\delta)$. We can use Schauder estimate to improve the regularity of $\phi$, $\psi $ to $C^{2k+2+2\rho, k+1+ \rho}_{x,t}$ for $t \in (0,T-\delta)$ for any small $\delta$. So $h$ also has higher derivatives up to order $k$.

Taking higher derivatives for \eqref{Lam2} and choosing $T$ small enough, we have $\Lambda_2 \in \mathcal X _{\Lambda}$.

Next, we show the Lipschitz continuity of $\Lambda_2$.
For any $0 \le t_1 < t_2 \le T-\delta $,
\begin{equation}
\begin{aligned}
& \big| \Lambda_2 (t_1) - \Lambda_2 (t_2) \big| \\
=\ & \Bigg|
\mu_0^{-1}(t_1)
\int_{t_1}^{t_2}
3^{-\frac 1 4} \mu_0^{\frac 1 2 } (b)
\pp_b \Bigg[
\frac{1}{\Gamma (\frac 1 2)}
\sum\limits_{j=1}^{k} c_{j2}
\int_0^b (b-a)^{-\frac 1 2}
\mathcal B^{(j)}(0,a) \diff a \\
&\qquad
+ \frac{2}{\Gamma^2(\frac 1 2)}
\int_0^{ b^{\frac 1 2 } }
h[\vec{c}_1, \Lambda_1, \psi[\Lambda_1], \phi[\Lambda_1] ] (b - u^2) \diff u \Bigg] \diff b
\\
& +
( \mu_0^{-1}(t_1) - \mu^{-1} (t_2) )\times
\int_{t_2}^T
3^{-\frac 1 4} \mu_0^{\frac 1 2 } (b)
\pp_b \Bigg[
\frac{1}{\Gamma (\frac 1 2)}
\sum\limits_{j=1}^{k} c_{j2}
\int_0^b (b-a)^{-\frac 1 2}
\mathcal B^{(j)}(0,a) \diff a \\
& \qquad
+ \frac{2}{\Gamma^2(\frac 1 2)}
\int_0^{ b^{\frac 1 2 } }
h[\vec{c}_1, \Lambda_1, \psi[\Lambda_1], \phi[\Lambda_1] ] (b - u^2) \diff u \Bigg] \diff b  \Bigg|
\\
\lesssim\ & (T - t_1)^{-2k} \int_{t_1}^{t_2} (T-b)^{2k} \diff b
+ (T- \theta t_1 - (1-\theta) t_2 )^{-2k-1} (t_2 - t_1)\int_{t_2}^T (T-b)^{2k} \diff b \\
\lesssim \ &
t_2 - t_1.
\end{aligned}
\end{equation}

Similarly, we can take $k$-th derivative of \eqref{Lam2} to prove $\Lambda^{(k)}_2 (t) \in {\rm Lip}(0,T-\delta)$, $\forall  \delta \in (0,T)$.

We shall solve the full inner--outer gluing system together with the reduced problem \eqref{orthogonality} in Section \ref{sec-innerouter}.



\medskip



\section{Solving the inner--outer gluing system}\label{sec-innerouter}

\medskip

In this section, we will solve the inner outer gluing system \eqref{eqn-outer} and \eqref{eqn-inner} by using the linear theories developed in Section \ref{sec-lt}
together with the fixed point argument. We first estimate the right hand sides of problems \eqref{eqn-outer} and \eqref{eqn-inner} under the topology chosen in Section \ref{sec-lt}.

\medskip

\subsection{The outer problem: estimate of $\mathcal G$}

\medskip

Recall from \eqref{def-mathcalG} that
\begin{equation*}
\begin{aligned}
\mathcal G(\phi,\psi,\alpha)=& - \pp_t \eta_R \mu_0^{-\frac 1 2}\phi(\frac x {\mu_0},t)
  + \Delta_x \eta_R \mu_0^{-\frac 1 2}\phi(\frac x {\mu_0},t)
  + 2\nn_x \eta_R \cdot \mu_0^{-\frac 3 2} \nn_y \phi(\frac x{\mu_0},t) \\
  & +\left(5(U_1 + \Phi_1)^4 (1-\eta_R) + 5\big[ (U_1 + \Phi_1)^4
        - \big( \mu^{-\frac 1 2} w(\frac x{\mu}) \big)^4 \big] \eta_R\right)\psi\\
& + (U_1 + \Phi_1 +\Phi_2)^5 - (U_1 + \Phi_1)^5 -5(U_1 + \Phi_1)^4 \Phi_2 \\
& + \left[S(U_1) - \chi(\frac{x}{c_0(T-t)^{\frac 1 2}  })
\frac {\alpha(t)} { (\mu^2 + |x|^2 )^{\frac 1 2}  }\right](1-\eta_R)
+ \big[ (U_1 + \Phi_1)^5 - U_1^5 \big](1-\eta_R).
\end{aligned}
\end{equation*}
We write
\begin{equation}
\begin{aligned}
g_1:=&~- \pp_t \eta_R \mu_0^{-\frac 1 2}\phi(\frac x {\mu_0},t)
  + \Delta_x \eta_R \mu_0^{-\frac 1 2}\phi(\frac x {\mu_0},t)
  + 2\nn_x \eta_R \cdot \mu_0^{-\frac 3 2} \nn_y \phi(\frac x{\mu_0},t), \\
g_2:=&~\left(5(U_1 + \Phi_1)^4 (1-\eta_R) + 5\big[ (U_1 + \Phi_1)^4
        - \big( \mu^{-\frac 1 2} w(\frac x{\mu}) \big)^4 \big] \eta_R\right)\psi,\\
g_3:=&~(U_1 + \Phi_1 +\Phi_2)^5 - (U_1 + \Phi_1)^5 -5(U_1 + \Phi_1)^4 \Phi_2, \\
g_4:=&~ \left[S(U_1) - \chi(\frac{x}{c_0(T-t)^{\frac 1 2}  })
\frac {\alpha(t)} { (\mu^2 + |x|^2 )^{\frac 1 2}  }\right](1-\eta_R)
+ \big[ (U_1 + \Phi_1)^5 - U_1^5 \big](1-\eta_R).
\end{aligned}
\end{equation}

\medskip

\noindent {\bf Estimate of $g_1$}

\medskip

Since $\|\phi\|_{0,\nu,\sigma}<+\infty$, we have
\begin{equation*}
|g_1|\lesssim \mu_0^{\nu-\frac52}(t)R^{-1-\sigma}(t)\chi_{\{|x|\sim \mu_0R\}}
\end{equation*}
and thus for some $\epsilon>0$
\begin{equation}\label{est-g1}
\|g_1\|_{**}\lesssim T^{\epsilon}(1+\|\phi\|_{0,\nu,\sigma})
\end{equation}
provided
\begin{equation}\label{est-g1-cond}
1+a-\sigma<0.
\end{equation}

\medskip

\noindent {\bf Estimate of $g_2$}

\medskip

Since $\|\psi\|_*<+\infty$ and we choose the initial data such that $\psi(0,T)=0$, we have
\begin{equation*}
\begin{aligned}
|\psi(x,t)|=&~|\psi(x,t)-\psi(0,T)|\\
\leq&~|\psi(x,t)-\psi(x,T)|+|\psi(x,T)-\psi(0,T)|\\
\lesssim&~ \mu_0^{\nu-\frac12}(t)R^{-a}(t)\|\psi\|_*.
\end{aligned}
\end{equation*}
Then thanks to the cut-off $\eta_R$, we have
\begin{equation*}
\begin{aligned}
|g_2|\lesssim&~ (1-\eta_R) \frac{1}{1+|y|^4}|\psi|\\
\lesssim&~\frac{\mu_0^{\nu-\frac12}(t)R^{-a}(t)}{1+|y|^4}\|\psi\|_* \chi_{\{|x|\geq \mu_0  R\}}\\
\lesssim&~\frac{\mu_0^{\nu-\frac12+a_2}(t)R^{a_2-a-4}(t)}{|x|^{a_2}}\|\psi\|_* \chi_{\{|x|\geq \mu_0  R\}}.\\
\end{aligned}
\end{equation*}
So we have that for some $\epsilon>0$
\begin{equation}\label{est-g2}
\|g_2\|_{**} \lesssim T^{\epsilon} \|\psi\|_*
\end{equation}
provided
\begin{equation}\label{est-g2-cond}
\nu-\nu_2-\frac12+a_2-\beta(a_2-a-4)>0.
\end{equation}

\medskip

\noindent {\bf Estimate of $g_3$}

\medskip

We evaluate
\begin{equation*}
\begin{aligned}
|g_3|\lesssim&~ |(U_1 + \Phi_1)^3 \Phi_2^2|\\
\lesssim&~ \frac{\mu_0^{-\frac32}(t)}{1+|y|^3}\left(|\psi|^2\chi_{\{|x|\lesssim \sqrt{T-t}\}}+\mu_0^{-1}|\phi|^2\chi_{\{|x|\lesssim \mu_0R\}}\right)\\
\lesssim&~\frac{\mu_0^{-\frac32}(t)}{1+|y|^3}\left(\mu_0^{2\nu-1}(t)R^{-2a}(t)\|\psi\|_*^2\chi_{\{|x|\lesssim \sqrt{T-t}\}}+\frac{\mu_0^{2\nu-1}(t)R^{\frac{8-2\sigma}{3}}(t)}{1+|y|^2}\|\phi\|_{0,\nu,\sigma}^2\chi_{\{|x|\lesssim \mu_0R\}}\right)\\
\lesssim&~\mu_0^{2\nu-\frac52}(t)R^{-2a}(t)\|\psi\|_*^2\chi_{\{|x|\lesssim \mu_0R\}}+\mu_0^{2\nu-\frac52}(t)R^{\frac{8-2\sigma}3}(t)\|\phi\|_{0,\nu,\sigma}^2\chi_{\{|x|\lesssim \mu_0R\}}\\
&~+\mu_0^{2\nu-\frac52+a_2}(t)R^{a_2-2a-3}(t)\frac{1}{|x|^{a_2}}\|\psi\|_*^2\chi_{\{\mu_0R\lesssim|x|\lesssim \sqrt{T-t}\}}.
\end{aligned}
\end{equation*}
Then for some $\epsilon>0$
\begin{equation}\label{est-g3}
\|g_3\|_{**}\lesssim T^{\epsilon}(1+\|\psi\|_*^2+\|\phi\|_{0,\nu,\sigma}^2)
\end{equation}
provided
\begin{equation}\label{est-g3-cond}
\begin{cases}
\nu-\beta(2-a)>0\\
\nu-\beta(\frac{14-2\sigma}{3}+a)>0\\
2\nu-\frac52+a_2-\beta(a_2-2a-3)-\nu_2>0\\
\end{cases}
\end{equation}

\medskip

\noindent {\bf Estimate of $g_4$}

\medskip

To estimate $g_4$, we first estimate
\begin{equation}\label{est-g4-1}
\begin{aligned}
&\quad\left|\left[S(U_1) - \chi(\frac{x}{c_0(T-t)^{\frac 1 2}  })
\frac {\alpha(t)} { (\mu^2 + |x|^2 )^{\frac 1 2}  }\right](1-\eta_R)\right|\\
&\lesssim\left|\left[\eta_1 S_{in} - \chi(\frac{x}{c_0(T-t)^{\frac12}})
\frac {\alpha(t)} { (\mu^2 + |x|^2 )^{\frac 1 2}  }\right](1-\eta_R)\right|\\
&\quad +\left|(1-\eta_{o1} )\eta_{o2}
( - \pp_t u_{out} + \Delta_x u_{out} + u_{out}^5)\right| \\
&\quad +\Bigg|- \pp_t \eta_1 u_{in}
+ \Delta_x \eta_1 u_{in}
+ 2 \nn_x \eta_1 \nn_x u_{in} -\pp_t [(1-\eta_{o1} )\eta_{o2}] u_{out} \\
&\qquad \quad + \Delta_x [(1-\eta_{o1} )\eta_{o2}] u_{out}
+ 2 \nabla_x [(1-\eta_{o1} )\eta_{o2}] \nabla_x u_{out}\Bigg|
\\
& \quad + \left|\big[ \eta_1 u_{in} + (1-\eta_{o1} )\eta_{o2} u_{out}\big]^5
- \eta_1 u_{in}^5 - (1-\eta_{o1} )\eta_{o2} u_{out}^5\right|.
\end{aligned}
\end{equation}
From \eqref{est-g4main}, we have
\begin{equation}\label{est-g4-2}
\begin{aligned}
&\quad \left|\left[\eta_1 S_{in} - \chi(\frac{x}{c_0(T-t)^{\frac12}})
\frac {\alpha(t)} { (\mu^2 + |x|^2 )^{\frac 1 2}  }\right](1-\eta_R)\right|\\
&\lesssim
|\alpha(t)| \frac{ \mu^2 }{(\mu^2 + |x|^2)^{\frac 3 2}} \eta_1 (1-\eta_R)
 + \frac { | \alpha(t) | } {(\mu^2 + |x|^2 )^{\frac 1 2} }
\chi (r\le \frac{ |x| }{\sqrt{T-t}} \le c_0)(1-\eta_R)
 \\
 &\quad + \Lambda^2 \mu^{-\frac 1 2 } \mu_0'
 \left| 3^{\frac 1 4} \frac{ \mu^2 }{(\mu^2 + |x|^2)^{\frac 3 2}}
 - \frac { 3^{\frac 1 4}} {2} \frac 1 {(\mu^2 + |x|^2 )^{\frac 1 2}} \right| \eta_1 (1-\eta_R)
 \\
 &\quad + \left|2 \mu_0'' \mu^{\frac 1 2} h(\frac{x}{\mu}) \eta_1
 + \mu_0' \mu^{-\frac 1 2} \mu' h(\frac{x}{\mu}) \eta_1
 +2\mu_0' \mu^{-\frac 1 2} \mu' h(\frac{x}{\mu} ) \eta_1\right|(1-\eta_R) \\
 &\quad + \frac{\mu^{\frac 5 2}}{(\mu^2 + |x|^2)^{\frac 3 2}} (\mu_0')^2 \left|h^2(\frac x{\mu_0})\right| \eta_1 (1-\eta_R)
 \\
 & \lesssim \frac{\mu_0^{-\frac32+a_2}\mu_0' R^{a_2-3}}{|x|^{a_2}}\chi_{\{|x|\geq\mu_0R\}}\|\Lambda\|_{\infty}+\frac{\mu_0^{-\frac12}\mu_0'}{|x|}\chi_{\{|x|\geq\mu_0R\}}\|\Lambda\|_{\infty}\\
 &\quad+ \frac{\mu_0^{\frac12+\frac{a_2-3}{4k}}}{|x|^{a_2}}\chi_{\{|x|\geq\mu_0R\}}+ \frac{\mu_0^{\frac52+\frac{a_2-5}{4k}}}{|x|^{a_2}}\chi_{\{|x|\geq\mu_0R\}}
\end{aligned}
\end{equation}
where the function $h$ is defined in \eqref{def-hhh}.  Similarly, we have the following estimates for the rest terms. We evaluate the term
\begin{equation}\label{est-g4-3}
\begin{aligned}
|S_{out}|=&~|-\partial_t u_{out}+\Delta u_{out} +u_{out}^5|\\
\lesssim&~(T-t)^{5(k-\zeta_1)}.
\end{aligned}
\end{equation}
If we choose $\zeta_1=\zeta_2=\frac12$, $r=r_1$ and $r_2>3r$, then we have
\begin{equation}\label{est-g4-4}
\begin{aligned}
&~\Bigg|- \pp_t \eta_1 u_{in}
+ \Delta_x \eta_1 u_{in}
+ 2 \nn_x \eta_1 \nn_x u_{in} -\pp_t [(1-\eta_{o1} )\eta_{o2}] u_{out} \\
&~\qquad \quad + \Delta_x [(1-\eta_{o1} )\eta_{o2}] u_{out}
+ 2 \nabla_x [(1-\eta_{o1} )\eta_{o2}] \nabla_x u_{out}\Bigg|
\\
\lesssim &~ |\partial_t \eta_1(u_{in}-u_{out})|+|\nn_x \eta_1\cdot\nn_x (u_{in}-u_{out})|+|\Delta_x \eta_1(u_{in}-u_{out})|\\
\lesssim &~(T-t)^{k-\frac12},
\end{aligned}
\end{equation}
where we have used the cancellation in the matching \eqref{matching-in}--\eqref{matching-out}. Since $\zeta_1=\zeta_2=\frac12$, we have
\begin{equation}\label{est-g4-5}
\begin{aligned}
&\quad|[\eta_1 u_{in}+(1-\eta_{o1})\eta_{o2}u_{out}]^5-\eta_1 u_{in}^5-(1-\eta_{o1})\eta_{o2}u_{out}^5|\\
&\lesssim |u_{out}|^5 \chi_{\{|x|\sim \sqrt{T-t}\}}\lesssim (T-t)^{5(k-\frac12)}.
\end{aligned}
\end{equation}
We choose initial condition of $\Phi_1$ such that $\Phi_1(0,T)=0$. Then by Duhamel's formula, we can show that
$$
|\Phi_1(x,t)|\lesssim \alpha(t)(T-t)^{\frac32}.
$$
Therefore, we obtain
\begin{equation}\label{est-g4-6}
\begin{aligned}
|[(U_1+\Phi_1)^5-U_1^5](1-\eta_R)|\lesssim&~ (1-\eta_R)|U_1^4 \Phi_1|\\
\lesssim&~(1-\eta_R)\frac{\mu_0^{-2}(t)}{1+|y|^4}\alpha(t)(T-t)^{\frac32}.
\end{aligned}
\end{equation}
Collecting estimates \eqref{est-g4-1}--\eqref{est-g4-6}, we conclude that
\begin{equation}\label{est-g4}
\|g_4\|_{**}\lesssim T^{\epsilon}\left(1+\|\Lambda\|_{\infty}\right)
\end{equation}
provided
\begin{equation}\label{est-g4-cond}
\begin{cases}
a_2-\frac12-\frac{1}{2k}+\beta(3-a_2)-\nu_2>0\\
\frac12-\frac{1}{2k}-\nu_2>0\\
\frac12+\frac{a_2-3}{4k}-\nu_2>0\\
5(\frac12-\frac1{4k})-\nu+\frac52-\beta(2+a)>0\\
2-\nu-\frac{1}{4k}-\beta(2+a)>0\\
a_2+\frac{1}{4k}-\frac32+\beta(4-a_2)-\nu_2>0\\
\end{cases}
\end{equation}

\medskip

In conclusion, from \eqref{est-g1}, \eqref{est-g1-cond}, \eqref{est-g2}, \eqref{est-g2-cond}, \eqref{est-g3}, \eqref{est-g3-cond}, \eqref{est-g4} and \eqref{est-g4-cond}, we obtain that for some $\epsilon>0$
\begin{equation}\label{est-mathcalG}
\|\mathcal G\|_{**}\lesssim T^{\epsilon}\left(1+\|\psi\|_*+\|\phi\|_{0,\nu,\sigma}+\|\Lambda\|_{\infty}\right)
\end{equation}
provided
\begin{equation}\label{est-mathcalG-cond}
\begin{cases}
1+a-\sigma<0\\
\nu-\nu_2-\frac12+a_2-\beta(a_2-a-4)>0\\
\nu-\beta(2-a)>0\\
\nu-\beta(\frac{14-2\sigma}{3}+a)>0\\
2\nu-\frac52+a_2-\beta(a_2-2a-3)-\nu_2>0\\
a_2-\frac12-\frac{1}{2k}+\beta(3-a_2)-\nu_2>0\\
\frac12-\frac{1}{2k}-\nu_2>0\\
\frac12+\frac{a_2-3}{4k}-\nu_2>0\\
5(\frac12-\frac1{4k})-\nu+\frac52-\beta(2+a)>0\\
2-\nu-\frac{1}{4k}-\beta(2+a)>0\\
a_2+\frac{1}{4k}-\frac32+\beta(4-a_2)-\nu_2>0\\
\end{cases}
\end{equation}

\medskip

\subsection{The inner problem: estimate of $\mathcal H$}

\medskip

Recall from \eqref{def-mathcalH} that
\begin{equation*}
\begin{aligned}
\mathcal H(\phi,\psi,\alpha)=
&~ 5\big[ (u_{in}(\mu_0 y, t) + \Phi_1(\mu_0 y, t))^4
  - \mu_0^{-2}w^4(y)\big] \mu_0^2 \phi(y,t) \\
& ~+ 5 \mu_0^{\frac 1 2}(1 + \Lambda)^{-4} w^4(\frac y{(1+\Lambda)^2})
    \psi(\mu_0 y,t)
  + \mu_0 \mu_0'
    \big[ \frac 1 2 \phi(y, t) + \nn_y \phi(y,t) \cdot y \big]\\
&~ + \mu_0^{\frac52}\left[S(U_1) - \chi(\frac{\mu_0 y}{c_0(T-t)^{\frac12}})
\frac {\alpha(t)} { (\mu^2 + |\mu_0 y|^2 )^{\frac 1 2}  }\right]\\
& ~+ \mu_0^{\frac 5 2}\big[ (u_{in}(\mu_0 y, t) + \Phi_1(\mu_0 y, t))^5
  - u_{in}^5(\mu_0 y, t) \big].
\end{aligned}
\end{equation*}
We evaluate
\begin{equation}\label{est-mathcalH1}
\begin{aligned}
&\quad\left|5\big[ (u_{in}(\mu_0 y, t) + \Phi_1(\mu_0 y, t))^4
  - \mu_0^{-2}w^4(y)\big] \mu_0^2 \phi(y,t)\right|\\
&\lesssim \frac{\mu_0'\mu_0^{\frac{3}{4k}}}{1+|y|^3}\frac{\mu_0^{\nu}R^{\frac{4-\sigma}3}}{1+|y|}\|\phi\|_{0,\nu,\sigma},
\end{aligned}
\end{equation}
\begin{equation}\label{est-mathcalH2}
\begin{aligned}
&\quad\left|5 \mu_0^{\frac 1 2}(1 + \Lambda)^{-4} w^4(\frac y{(1+\Lambda)^2})
    \psi(\mu_0 y,t)
  + \mu_0 \mu_0'
    \big[ \frac 1 2 \phi(y, t) + \nn_y \phi(y,t) \cdot y \big]\right|\\
&\lesssim \frac{\mu_0^{\nu}R^{-a}}{1+|y|^4}\|\psi\|_*+ \frac{\mu_0 \mu_0'\mu_0^{\nu}R^{\frac{4-\sigma}3}}{1+|y|}\|\phi\|_{0,\nu,\sigma}.
\end{aligned}
\end{equation}
By \eqref{est-g4main}, we have
\begin{equation}\label{est-mathcalH3}
\begin{aligned}
&\quad\left|\mu_0^{\frac52}\left[S(U_1) - \chi(\frac{\mu_0 y}{c_0(T-t)^{\frac12}})
\frac {\alpha(t)} { (\mu^2 + |\mu_0 y|^2 )^{\frac 1 2}  }\right]\right|\\
&\lesssim \frac{\mu_0\mu_0'\|\Lambda\|_{\infty}}{1+|y|^3}+\mu_0^{4-\frac1k}(1+|y|)+\frac{(\mu_0')^2\mu_0^2}{1+|y|}
\end{aligned}
\end{equation}
and
\begin{equation}\label{est-mathcalH4}
\begin{aligned}
\left|\mu_0^{\frac 5 2}\big[ (u_{in}(\mu_0 y, t) + \Phi_1(\mu_0 y, t))^5
  - u_{in}^5(\mu_0 y, t) \big]\right|\lesssim\frac{\mu_0'\mu_0^{\frac{3}{4k}}}{1+|y|^4}\|\Lambda\|_{\infty}.
\end{aligned}
\end{equation}
From estimates \eqref{est-mathcalH1}--\eqref{est-mathcalH4}, we obtain that for some $\epsilon>0$
\begin{equation}\label{est-mathcalH}
\|\mathcal{H}\|_{\nu,2+\sigma}\lesssim T^{\epsilon}\left(1+\|\phi\|_{0,\nu,\sigma}+\|\psi\|_*+\|\Lambda\|_{\infty}\right)
\end{equation}
provided
\begin{equation}\label{est-mathcalH-cond}
\begin{cases}
1+\frac{1}{4k}-\frac{\beta(4-\sigma)}3>0\\
0<\sigma<2\\
a>0\\
2-\frac{1}{2k}-\frac{\beta(2\sigma+7)}{3}>0\\
2-\frac1{2k}-\beta(\sigma-1)>0\\
4-\frac1k-\beta(3+\sigma)>0\\
1+\frac1{4k}-\nu>0\\
\end{cases}
\end{equation}

\medskip


\subsection{The fixed point formulation}

\medskip

The inner--outer gluing system \eqref{eqn-outer} and \eqref{eqn-inner} can be formulated as a fixed point problem for operators we shall describe below.

We first define the following function spaces
\begin{equation}\label{def-fcnspaces}
\begin{aligned}
&\mathcal X_{\phi}:=\left\{\phi\in L^{\infty}(B_{2R}\times (0,T))\cap C^{2k+2\rho,k+\rho}_{y,t}(B_{2R}\times (0,T-\delta)):~\|\phi\|_{0,\nu,\sigma}<+\infty\right\}\\
&\mathcal X_{\psi}:=\big\{\psi\in L^{\infty}(\R^3\times (0,T))\cap C^{2k+2\rho,k+\rho}_{x,t}(\R^3\times (0,T-\delta)):~\|\psi\|_{*}<+\infty\big\}\\
&\mathcal X_{\Lambda}:=\Big\{ \Lambda(t) :
\| \Lambda(t) (T-t)^{-1 + \varepsilon} \|_{L^{\infty}} \le C_{0 \Lambda},\
\| \Lambda^{(1)} (t)(T-t)^{\varepsilon} \|_{L^{\infty} } \le C_{1\Lambda}, \
\cdots ,\\
&\qquad \qquad
\| \Lambda^{(k)} (t) (T-t)^{k-1+\varepsilon} \|_{L^{\infty}} \le C_{k \Lambda}, \Lambda(t) \in C^{k+1,\rho}(0,T-\delta), \foral \delta \in (0,T)
  \Big\}\\
&\mathcal X_{\vec c}:=\left\{ \vec{c}=(c_1, c_2,\dots, c_k) :
|c_j| \le C_c T^{\frac 1 2 - j - \varepsilon},\ j=1,2,\dots,  k \right\}\\
\end{aligned}
\end{equation}

Define
\begin{equation}\label{def-mX}
\mathcal X=\mathcal X_{\phi}\times \mathcal X_{\psi}\times \mathcal X_{\Lambda} \times \mathcal X_{\vec c}.
\end{equation}
We shall solve the inner--outer gluing system in a closed ball $\mathcal B$ in $(\phi,\psi,\Lambda,\vec c)\in\mathcal X$.

The inner--outer gluing system \eqref{eqn-outer} and \eqref{eqn-inner} can be formulated as a fixed point problem, where we define an operator $\mathcal F$ which returns the solution from $\mathcal B$ to $\mathcal X$
\begin{equation*}
\begin{aligned}
\mathcal F: \mathcal B\subset \mathcal X~\rightarrow& ~\mathcal X\\
v~\mapsto&~ \mathcal F(v)=(\mathcal F_{\phi}(v),\mathcal F_{\psi}(v),\mathcal F_{\Lambda}(v),\mathcal F_{\vec c}(v))
\end{aligned}
\end{equation*}
with
\begin{equation}\label{def-operators}
\begin{aligned}
\mathcal F_{\phi}(\phi,\psi,\Lambda,\vec c)=&~\mathcal T_{\phi}(\mathcal H[\phi,\psi,\Lambda])\\
\mathcal F_{\psi}(\phi,\psi,\Lambda,\vec c)=&~\mathcal T_{\psi}\left(\mathcal G(\phi,\psi,\Lambda)\right)\\
\mathcal F_{\Lambda}(\phi,\psi,\Lambda,\vec c)=&~\mathcal T_{\Lambda}(\phi,\psi,\Lambda,\vec c)\\
\mathcal F_{\vec c}(\phi,\psi,\Lambda,\vec c)=&~\mathcal T_{\vec c}\left(\phi,\psi,\Lambda,\vec c\right)\\
\end{aligned}
\end{equation}
Here $\mathcal T_{\phi}$ is the operators given from Proposition \ref{propmode} which solves the inner problem \eqref{eqn-inner}. The operator $\mathcal T_{\psi}$ defined by Proposition \ref{outer-apriori} deals with the outer problem \eqref{eqn-outer}. Operators $\mathcal T_{\Lambda}$ and $\mathcal T_{\vec c}$ handle the reduced equation \eqref{orthogonality}.

\medskip


\medskip

\subsection{Choice of constants}\label{subsec-choices}

In this section, we list all the constraints of the parameters
which are sufficient for the inner--outer gluing scheme to work.

We first indicate all the parameters used in different norms.
\begin{itemize}
\item $R(t)=\mu_0^{-\beta}(t)$ with $\beta\in(0,1/2)$.

\medskip

\item The norm for $\phi$ solving the inner problem \eqref{eqn-inner} is $\|\cdot\|_{0,\nu,\sigma}$ which is defined in \eqref{def-normm0}, where we require that $\nu>0$, $0<\sigma<2$.

\medskip

\item The norm for $\psi$ solving the outer problem \eqref{eqn-outer} is $\|\cdot\|_*$ which is defined in \eqref{def-norm*}, while the $\|\cdot\|_{**}$-norm for the right hand side of the outer problem \eqref{eqn-outer} is defined in \eqref{def-norm**}. Here we require that $\nu,\nu_2>0$ and $a,\gamma\in(0,1)$. Also, as mentioned in Remark \ref{remark5.2}, we require $\nu_2+\frac{2-a_2}{4k}>\nu-\frac12+a\beta$ such that the $\|\cdot\|_*$-norm is well defined.
\end{itemize}

In order to get the desired estimates for the outer problem \eqref{eqn-outer}, by \eqref{est-mathcalG-cond}, we need the following restrictions
\begin{equation*}
\begin{cases}
1+a-\sigma<0\\
\nu-\nu_2-\frac12+a_2-\beta(a_2-a-4)>0\\
\nu-\beta(2-a)>0\\
\nu-\beta(\frac{14-2\sigma}{3}+a)>0\\
2\nu-\frac52+a_2-\beta(a_2-2a-3)-\nu_2>0\\
a_2-\frac12-\frac{1}{2k}+\beta(3-a_2)-\nu_2>0\\
\frac12-\frac{1}{2k}-\nu_2>0\\
\frac12+\frac{a_2-3}{4k}-\nu_2>0\\
5(\frac12-\frac1{4k})-\nu+\frac52-\beta(2+a)>0\\
2-\nu-\frac{1}{4k}-\beta(2+a)>0\\
a_2+\frac{1}{4k}-\frac32+\beta(4-a_2)-\nu_2>0\\
\end{cases}
\end{equation*}
In order to get the desired estimates for the inner problem \eqref{eqn-inner}, by \eqref{est-mathcalH-cond}, we need
\begin{equation*}
\begin{cases}
1+\frac{1}{4k}-\frac{\beta(4-\sigma)}3>0\\
0<\sigma<2\\
a>0\\
2-\frac{1}{2k}-\frac{\beta(2\sigma+7)}{3}>0\\
2-\frac1{2k}-\beta(\sigma-1)>0\\
4-\frac1k-\beta(3+\sigma)>0\\
1+\frac1{4k}-\nu>0\\
\end{cases}
\end{equation*}
Elementary computations show that suitable choices of the parameters satisfying all the restrictions in this section can be found, which ensures the implementation of the gluing procedure.

\medskip

\subsection{Proof of Theorem \ref{teo1}}

\medskip

Consider the operator
\begin{equation}\label{def-mF}
\mathcal F=(\mathcal F_{\phi},\mathcal F_{\psi},\mathcal F_{\Lambda},\mathcal F_{\vec c})
\end{equation}
given in \eqref{def-operators}. To prove Theorem \ref{teo1}, our strategy is to show the existence of a fixed point for the operator $\mathcal F$ in $\mathcal B$ by the Schauder fixed point theorem. By collecting the estimates \eqref{est-mathcalG}, \eqref{est-mathcalH}, and using Proposition \ref{outer-apriori}, Proposition \ref{propmode} and discussions in Section \ref{sec-redu}, we conclude that for $(\phi,\psi,\Lambda,\vec c)\in\mathcal B$
\begin{equation}\label{contraction-mF}
\begin{cases}
\|\mathcal F_{\phi}(\phi,\psi,\Lambda,\vec c)\|_{0,\nu,\sigma}\leq CT^{\epsilon}\\
\|\mathcal F_{\psi}(\phi,\psi,\Lambda,\vec c)\|_{*}\leq CT^{\epsilon}\\
\|\mathcal F_{\Lambda}(\phi,\psi,\Lambda,\vec c)\|_{\Lambda}\leq CT^{\epsilon}\\
\|\mathcal F_{\vec c}(\phi,\psi,\Lambda,\vec c)\|_{\vec c}\leq CT^{\epsilon}\\
\end{cases}
\end{equation}
where $C>0$ is a constant independent of $T$, and $\epsilon>0$ is a small fixed number. On the other hand, compactness of the operator $\mathcal F$ defined in \eqref{def-mF} can be proved by proper variants of \eqref{contraction-mF}. Indeed, if we vary the parameters slightly such that all the restrictions in Section \ref{subsec-choices} are still satisfied, then we get \eqref{contraction-mF} with the norms in the left hand side defined by the new parameters, while the closed ball $\mathcal B$ remains the same. To be more specific, for fixed $\nu',a'$ which are close to $\nu,a$, one can show that if $(\phi,\psi,\Lambda)\in\mathcal B$, then
$$\|\mathcal F_{\phi}(\phi,\psi,\Lambda,\vec c)\|_{0,\nu',\sigma'}\leq CT^{\epsilon'}.$$
Furthermore, one can show that for $\nu'>\nu$ and $\nu'-\frac{\beta(4-\sigma')}3>\nu-\frac{\beta(4-\sigma)}3$, one has a compact embedding in the sense that if a sequence $\{\phi^0_n\}$ is bounded in the $\|\cdot\|_{0,\nu',\sigma'}$-norm, then there exists a subsequence which converges in the $\|\cdot\|_{0,\nu,\sigma}$-norm. Thus, the compactness follows directly from a standard diagonal argument by Arzel\`a--Ascoli's theorem. Arguing in a similar manner, the compactness for the rest operators can be proved. Therefore, the existence of the desired blow-up solution is concluded from the Schauder fixed point theorem.




\medskip


\medskip

\section{Nonradial case: blow-up at multiple points}

\medskip

As a by-product, the inner--outer gluing method carried out in this paper can be applied to construct {\em non-radial} type II blow-up at {\em multiple $N$ points} for the first blow-up rate $k=1$. To be more precise, we take the first approximation to be
$$
U_N=\sum_{j=1}^N \mu_j^{-\frac12}(t) w\left(\frac{x-\xi_j(t)}{\mu_j(t)}\right),
$$
where we expect that the scaling and translation parameters satisfy
$$\mu_j(t)\to 0,\quad \xi_j(t)\to q_j \quad \mbox{ as }~ t\to T$$
for $j=1,\dots,N$, where $q_j$ are given points in $\R^3$ with $\max\limits_{k,l=1,\dots,N}|q_k-q_l|>\delta$ for uniform $\delta>0$. Formally, the error of $U_N$ behaves like
\begin{equation*}
\begin{aligned}
S(U_N)\sim &~\sum_{j=1}^N \left(\mu_j^{-\frac32}(t)\dot\mu_j(t)Z_0(y_j)+\mu_j^{-\frac{3}{2}}(t)\dot\xi_j\cdot\nabla w(y_j)\right)\\
:=&~ \sum_{j=1}^N \left(\mathcal E_{0,j}+\mathcal E_{1,j} \right)
\end{aligned}
\end{equation*}
where $y_j=\frac{x-\xi_j(t)}{\mu_j(t)}$. To cancel out the slow decay error at mode 0 near each point $q_j$, we introduce the correction $\Phi_{(j)}$ solving
$$\partial_t \Phi_{(j)}=\Delta \Phi_{(j)} +\mathcal E_{0,j}\quad \mbox{for }~j=1\dots,N$$
so that the corrected approximation is
$$
u_*=\sum_{j=1}^N \mu_j^{-\frac12}(t) w\left(\frac{x-\xi_j(t)}{\mu_j(t)}\right)+\Phi_{(j)}.
$$
We then look for the solution
$$
u=u_*+\sum_{j=1}^N \mu_j^{-\frac12}(t)\eta_{R_j}\phi(y_j,t)+\psi(x,t).
$$
Let us emphasize that in the non-radial case, the blow-up rate for $k=1$ will be obtained by {\em orthogonality condition} instead of the matching in the general case $k\geq 2$. The orthogonality condition at scaling mode 0 is basically
$$
\int_{B_{2R}} 5w^4 \Phi_{(j)} Z_0(y) dy+ \int_{B_{2R}} 5w^4 \psi Z_0(y) dy\approx 0
$$
which turns out to be a nonlocal equation like before, and using the method in Section \ref{sec-redu}, we have
$$
\mu_j(t)\sim (T-t)^2 ~\mbox{  for  }~j=1,\cdots,N.
$$
Indeed, the orthogonality condition at mode $0$ gives a nonlocal reduced equation of the following form:
$$
\int_{0}^{t}\frac{\mu^{-1/2}_j(s)\dot\mu_j(s)}{(t-s)^{1/2}} ds=c_{*,j},
$$
where $c_{*,j}<0$ is some constant coming from the initial data. We rewrite the above integro-differential equation as
$$
\int_{0}^{t}\frac{\dot v_j(s)}{(t-s)^{1/2}} ds=c_{*,j},
$$
where $v_j(t)=2\mu_j^{1/2}(t)$. Imposing $v_j(T)=0$ and using \eqref{lemma-RL}, we obtain that for some $c>0$
$$
v_j(t)-v_j(T)=v_j(t)=c(T^{1/2}-t^{1/2})\sim T-t
$$
and thus
$$
\mu_j(t)\sim (T-t)^2
$$
which is precisely the first rate ($k=1$) predicted in \cite{fhv}.

On the other hand, the orthogonality condition at translation mode 1
$$
\int_{B_{2R}} \mathcal E_{1,j} Z_{\ell}(y) dy\approx 0
$$
simply implies
$$
\xi_j(t)\sim q_j ~\mbox{  for  }~j=1,\cdots,N,
$$
where
$$Z_{\ell}(y)=\partial_{y_{\ell}}w \quad \ell=1,2,3.$$
We will not elaborate on the details.


\medskip

\appendix
\section{Proofs of technical Lemmas}

\medskip

In this appendix, we prove the technical lemmas in Section \ref{sec-outer}.

\begin{proof}[Proof of lemma \ref{lemma-rhs1}]
Duhamel's formula gives
\begin{equation*}
\begin{aligned}
|\psi|\lesssim~ \int_0^t \int_{|w|\leq2\mu_0(s)R(s)} \frac{e^{-\frac{|x-w|^2}{4(t-s)}}}{(4\pi(t-s))^{3/2}}\mu_0^{\nu-\frac52}(s) R^{-2-a}(s) dw ds\\
\end{aligned}
\end{equation*}
We decompose
\begin{equation*}
\begin{aligned}
&\quad \int_0^t \mu_0 ^{\nu-\frac52}(s) R^{-2-a}(s) \int_{\left|w\right|\leq 2\mu_0 (s)R(s)} \frac{e^{-\frac{|x-w|^2}{4(t-s)}}}{(4\pi(t-s))^{3/2}} dw ds\\
&=\left(\int_0^{t-(T-t)}+\int_{t-(T-t)}^{t-\mu^{\delta_1}_0(t)}+\int_{t-\mu^{\delta_1}_0(t)}^t\right) \mu_0 ^{\nu-\frac52}(s) R^{-2-a}(s) \int_{\left|w\right|\leq 2\mu_0 (s)R(s)} \frac{e^{-\frac{|x-w|^2}{4(t-s)}}}{(4\pi(t-s))^{3/2}} dw ds\\
&:=I_{11}+I_{12}+I_{13}
\end{aligned}
\end{equation*}
for some $\delta_1\geq 1$ to be found. Directly integrating, we obtain
\begin{equation*}
\begin{aligned}
I_{11}=&~ \int_0^{t-(T-t)}\mu_0 ^{\nu-\frac52}(s) R^{-2-a}(s) \int_{\left|w\right|\leq 2\mu_0 (s)R(s)} \frac{e^{-\frac{|x-w|^2}{4(t-s)}}}{(4\pi(t-s))^{3/2}} dw ds\\
\lesssim&~\int_0^{t-(T-t)}\mu_0 ^{\nu-\frac52}(s) R^{-2-a}(s) \int_{\left|\tilde w\right|\leq \frac{2\mu_0 (s)R(s)}{\sqrt{t-s}}} e^{-\frac{|\tilde x-\tilde w|^2}{4}} d\tilde w ds\\
\lesssim&~\int_0^{t-(T-t)} \mu_0 ^{\nu-\frac52}(s) R^{-2-a}(s)\frac{\left(\mu_0 (s)R(s)\right)^3}{(t-s)^{3/2}} ds\\
\lesssim&~ \int_0^{t-(T-t)} \frac{\mu_0 ^{\nu+\frac12}(s) R^{1-a}(s)}{(T-s)^{3/2}} ~ds\\
\lesssim&~ \mu_0 ^{\nu+\frac12-\frac{1}{4k}}(0)R^{1-a}(0),
\end{aligned}
\end{equation*}
Similarly we compute

\begin{equation}\label{est-I12}
\begin{aligned}
I_{12}\lesssim&~ \int_{t-(T-t)}^{t-\mu^{\delta_1}_0(t)} \mu_0 ^{\nu-\frac52}(s) R^{-2-a}(s)\frac{\left(\mu_0 (s)R(s)\right)^3}{(t-s)^{3/2}} ds\\
\lesssim&~ \mu_0 ^{\nu+\frac{1-\delta_1}{2}}(t)R^{1-a}(t)
\end{aligned}
\end{equation}
and
\begin{equation}\label{est-I13}
\begin{aligned}
I_{13}\lesssim&~ \int_{t-\mu^{\delta_1}_0(t)}^t \mu_0 ^{\nu-\frac52}(s) R^{-2-a}(s) ds\\
\lesssim&~ \mu_0 ^{\nu-\frac52+\delta_1}(t) R^{-2-a}(t).
\end{aligned}
\end{equation}
Since $\beta\in(0,1/2)$, we can choose $\delta_1=2-2\beta$. Therefore, we get
$$I_{11}+I_{12}+I_{13}\lesssim \mu_0 ^{\nu-\frac12}(0) R^{-a}(0)$$
as desired.

Similarly, to prove \eqref{outerT-rhs1}, we decompose
$$|\psi(x,t)-\psi(x,T)|\leq I_{21}+I_{22}+I_{23}$$
with
$$I_{21}=\int_0^{t-(T-t)} \int_{\R^3}|G(x-w,t-s)-G(x-w,T-s)||f(w,s)|dwds$$
$$I_{22}=\int_{t-(T-t)}^t \int_{\R^3}|G(x-w,t-s)-G(x-w,T-s)||f(w,s)|dwds$$
$$I_{23}=\int_t^T \int_{\R^3}|G(x-w,T-s)||f(w,s)|dwds,$$
where $G(x,t)$ is the heat kernel
\begin{equation}\label{def-heatkernel}
G(x,t)=\frac{e^{-\frac{|x|^2}{4t}}}{(4\pi t)^{3/2}}.
\end{equation}
For the first integral $I_{21}$, we have
$$I_{21}\leq (T-t)\int_0^1\int_0^{t-(T-t)}\int_{|w|\leq 2\mu_0 (s)R(s)} |\partial_t G(x-w,t_v-s)|\mu_0 ^{\nu-\frac52}(s)R^{-2-a}(s) dwdsdv,$$
where $t_v=vT+(1-v)t$. Changing variables, we evaluate
\begin{equation*}
\begin{aligned}
&\quad \int_{|w|\leq 2\mu_0 (s)R(s)} |\partial_t G(x-w,t_v-s)| dw\\
&\lesssim \int_{|w|\leq 2\mu_0 (s)R(s)} e^{-\frac{|x-w|^2}{4(t_v-s)}}\left(\frac{|x-w|^2}{(t_v-s)^{\frac{7}{2}}}+\frac{1}{(t_v-s)^{\frac{5}{2}}}\right)dw\\
&=\int_{\left|w_v\right|\leq \frac{2 \mu_0 (s)R(s)}{\sqrt{t_v-s}}} e^{-\frac{|x_v-w_v|^2}{4}}\left(1+|x_v-w_v|^2\right)\frac{1}{t_v-s}dw_v\\
\end{aligned}
\end{equation*}
and thus
\begin{equation*}
\begin{aligned}
&\quad\int_0^{t-(T-t)}\int_{|w|\leq 2\mu_0 (s)R(s)} |\partial_t G(x-w,t_v-s)|\mu_0 ^{\nu-\frac52}(s)R^{-2-a}(s) dwds\\
&\lesssim \int_0^{t-(T-t)} \mu_0 ^{\nu-\frac52}(s)R^{-2-a}(s) \frac{(\mu_0 (s)R(s))^3}{(t_v-s)^{5/2}}ds\\
&\lesssim \int_0^{t-(T-t)} \mu_0 ^{\nu+\frac12}(s)R^{1-a}(s) (T-s)^{-5/2} ds\\
&\lesssim \mu_0 ^{\nu+\frac12-\frac{3}{4k}}(t)R^{1-a}(t),
\end{aligned}
\end{equation*}
from which we conclude that
\begin{equation}\label{est-I21}
I_{21}\lesssim \mu_0 ^{\nu+\frac12-\frac{1}{4k}}(t)R^{1-a}(t).
\end{equation}
For $I_{22}$, we have
\begin{equation*}
\begin{aligned}
I_{22}\leq&~ \int_{t-(T-t)}^t\int_{|w|\leq 2\mu_0 (s)R(s)} |G(x-w,t-s)|\mu_0 ^{\nu-\frac52}(s)R^{-2-a}(s)dwds\\
&~+ \int_{t-(T-t)}^t\int_{|w|\leq 2\mu_0 (s)R(s)} |G(x-w,T-s)|\mu_0 ^{\nu-\frac52}(s)R^{-2-a}(s)dwds.
\end{aligned}
\end{equation*}
The first integral above can be estimated as
\begin{equation*}
\begin{aligned}
&\quad \int_{t-(T-t)}^t \int_{|w|\leq 2\mu_0 (s)R(s)} |G(x-w,t-s)|\mu_0 ^{\nu-\frac52}(s)R^{-2-a}(s)dwds\\
&=\left(\int_{t-(T-t)}^{t-\mu_0 ^{\delta_1}(t)}+\int_{t-\mu_0 ^{\delta_1}(t)}^t\right) |G(x-w,t-s)|\mu_0 ^{\nu-\frac52}(s)R^{-2-a}(s)dwds.\\
\end{aligned}
\end{equation*}
Notice that we already estimate the above integral in \eqref{est-I12} and \eqref{est-I13}. So the choice $\delta_1=2-2\beta$, one has
\begin{equation*}
\begin{aligned}
&\quad \int_{t-(T-t)}^t \int_{|w|\leq 2\mu_0 (s)R(s)} |G(x-w,t-s)|\mu_0 ^{\nu-\frac52}(s)R^{-2-a}(s)dwds\\
&\lesssim  \mu_0 ^{\nu-\frac12}(t) R^{-a}(t).
\end{aligned}
\end{equation*}
Similarly, it holds that
\begin{equation*}
\begin{aligned}
&\quad \int_{t-(T-t)}^t \int_{|(w_r,w_z)-\xi(s)|\leq 2\mu_0 (s)R(s)} |G(x-w,T-s)|\mu_0 ^{\nu-3}(s)R^{-2-\alpha}(s)dwds\\
&\lesssim  \mu_0 ^{\nu-\frac12}(t) R^{-a}(t).
\end{aligned}
\end{equation*}
Therefore, we obtain
\begin{equation}\label{est-I22}
I_{22}\lesssim \mu_0 ^{\nu-\frac12}(t) R^{-a}(t).
\end{equation}
For $I_{23}$, changing variables, one has
\begin{equation}\label{est-I23}
\begin{aligned}
I_{23}\lesssim&~ \int_t^T \int_{\left|w\right|\leq 2\mu_0 (s)R(s)}\frac{e^{-\frac{|x-w|^2}{4(T-s)}}}{(T-s)^{3/2}} \mu_0 ^{\nu-\frac52}(s)R^{-2-a}(s)dwds\\
\lesssim&~ \int_t^T \int_{\left|\tilde w\right|\leq \frac{2\mu_0 (s)R(s)}{\sqrt{T-s}}} e^{-\frac{|\tilde x-\tilde w|^2}{4}} \mu_0 ^{\nu-\frac52}(s)R^{-2-a}(s) d\tilde w ds\\
\lesssim&~ \int_t^T  \mu_0 ^{\nu-\frac52}(s)R^{-2-a}(s)\frac{(\mu_0 (s)R(s))^3}{(T-s)^{3/2}} ds\\
\lesssim&~ \mu_0 ^{\nu+\frac12-\frac{1}{2k}}(t)R^{1-a}(t)
\end{aligned}
\end{equation}
Collecting \eqref{est-I21}, \eqref{est-I22} and \eqref{est-I23}, we conclude the validity of \eqref{outerT-rhs1}.

Then we prove the gradient estimate \eqref{outergradient-rhs1}. By the heat kernel, we get
\begin{equation*}
\begin{aligned}
|\nabla \psi(x,t)|\lesssim&~ \int_0^t \frac{\mu_0 ^{\nu-\frac52}(s)R^{-2-a}(s)}{(t-s)^{\frac{5}{2}}}\int_{\left|w\right|\leq2 \mu_0 (s)R(s)} e^{-\frac{|x-w|^2}{4(t-s)}}|x-w| dwds\\
\lesssim&~ \int_0^t \frac{\mu_0 ^{\nu-\frac52}(s)R^{-2-a}(s)}{(t-s)^{1/2}}\int_{\left|\tilde w\right|\leq \frac{2 \mu_0 (s)R(s)}{\sqrt{t-s}}}  e^{-\frac{|\tilde w|^2}{4}}(1+|\tilde w|) d\tilde wds,\\
\end{aligned}
\end{equation*}
where $\tilde x=x(t-s)^{-1/2}$.
First, we compute
\begin{equation}\label{grad-rhs11}
\begin{aligned}
&\quad\int_0^{t-(T-t)} \frac{\mu_0 ^{\nu-\frac52}(s)R^{-2-a}(s)}{(t-s)^{1/2}}\int_{\left|w\right|\leq \frac{2 \mu_0 (s)R(s)}{\sqrt{t-s}}} e^{-\frac{|\tilde w|^2}{4}}(1+|\tilde w|) d\tilde wds\\
&\lesssim \int_0^{t-(T-t)} \frac{\mu_0 ^{\nu-\frac52}(s)R^{-2-a}(s)}{(t-s)^{1/2}}\frac{\left(\mu_0 (s)R(s)\right)^3}{(t-s)^{3/2}} ds\\
&\lesssim \int_0^{t-(T-t)} \frac{\mu_0 ^{\nu+\frac12}(s)R^{1-a}(s)}{(t-s)^{2}} ds\\
&\lesssim \int_0^{t-(T-t)} \mu_0 ^{\nu+\frac12}(s)R^{1-a}(s)(T-s)^{-2} ds\\
&\lesssim  \mu_0 ^{\nu+\frac12-\frac{1}{2k}}(0)R^{1-a}(0).
\end{aligned}
\end{equation}
Then we compute
\begin{equation}\label{grad-rhs12}
\begin{aligned}
&\quad\int_{t-(T-t)}^{t-\mu_0 ^{\delta_2}(t)} \frac{\mu_0 ^{\nu-\frac52}(s)R^{-2-a}(s)}{(t-s)^{1/2}}\int_{\left|w\right|\leq \frac{2 \mu_0 (s)R(s)}{\sqrt{t-s}}} e^{-\frac{|\tilde w|^2}{4}}(1+|\tilde w|) d\tilde wds\\
&\lesssim \int_{t-(T-t)}^{t-\mu_0 ^{\delta_2}(t)} \frac{\mu_0 ^{\nu+\frac12}(s)R^{1-a}(s)}{(t-s)^{2}} ds\\
&\lesssim \mu_0 ^{\nu+\frac{1}{2}-\delta_2}(t)R^{1-a}(t),
\end{aligned}
\end{equation}
where $\delta_2\geq 1$ is a constant to be determined. On the other hand, we have
\begin{equation}\label{grad-rhs13}
\begin{aligned}
&\quad\int_{t-\mu_0 ^{\delta_2}(t)}^t \frac{\mu_0 ^{\nu-\frac52}(s)R^{-2-a}(s)}{(t-s)^{\frac{5}{2}}}\int_{\left|w\right|\leq2 \mu_0 (s)R(s)} e^{-\frac{|x-w|^2}{4(t-s)}}|x-w| dwds\\
&\lesssim \int_{t-\mu_0 ^{\delta_2}(t)}^t \frac{\mu_0 ^{\nu-\frac52}(s)R^{-2-a}(s)}{(t-s)^{1/2}} ds\\
&\lesssim \mu_0 ^{\nu-\frac52+\frac{\delta_2}{2}}(t)R^{-2-a}(t).
\end{aligned}
\end{equation}
By choosing $\delta_2=2-2\beta$ and combining \eqref{grad-rhs11}--\eqref{grad-rhs13}, we prove the validity of the gradient estimate \eqref{outergradient-rhs1}. The proof of \eqref{outergradientT-rhs1} is similar to that of \eqref{outerT-rhs1}. We omit the details.

To prove the H\"older estimate \eqref{outerholder-rhs1}, we decompose
$$|\psi(x,t_2)-\psi(x,t_1)|\leq J_{11}+J_{12}+J_{13}$$
with
$$J_{11}=\int_0^{t_1-(t_2-t_1)}\int_{\R^3} |G(x-w,t_2-s)-G(x-w,t_1-s)|f(w,s)dwds,$$
$$J_{12}=\int_{t_1-(t_2-t_1)}^{t_1}\int_{\R^3} |G(x-w,t_2-s)-G(x-w,t_1-s)|f(w,s)dwds,$$
and
$$J_{13}=\int_{t_1}^{t_2}\int_{\R^3} G(x-w,t_2-s)f(w,s)dwds,$$
where $G(x,t)$ is the heat kernel \eqref{def-heatkernel}. Here we assume that $0<t_1<t_2<T$ with $t_2<2t_1$.
For $J_{11}$, by letting $t_v=vt_2+(1-v)t_1$, we have
\begin{equation*}
\begin{aligned}
J_{11}\leq &~(t_2-t_1)\int_0^1 \int_0^{t_1-(t_2-t_1)}\int_{\R^3} |\partial_t G(x-w,t_v-s)|f(w,s)dwdsdv\\
\lesssim &~ (t_2-t_1)\int_0^1 \int_0^{t_1-(t_2-t_1)}\int_{\left|w\right|\leq2 \mu_0 (s)R(s)} e^{-\frac{|x-w|^2}{4(t_v-s)}}\bigg(\frac{|x-w|^2}{(t_v-s)^{\frac{7}{2}}}\\
&\qquad\qquad\qquad\qquad\qquad\qquad+\frac{1}{(t_v-s)^{\frac{5}{2}}}\bigg) \mu_0 ^{\nu-\frac52}(s)R^{-2-a}(s) dwdsdv,\\
\end{aligned}
\end{equation*}
and
\begin{equation*}
\begin{aligned}
&~\int_{\left|w\right|\leq2 \mu_0 (s)R(s)} e^{-\frac{|x-w|^2}{4(t_v-s)}}\left(\frac{|x-w|^2}{(t_v-s)^{\frac{7}{2}}}+\frac{1}{(t_v-s)^{\frac{5}{2}}}\right)\mu_0 ^{\nu-\frac52}(s)R^{-2-a}(s) dw\\
=&~\int_{\left|w_v\right|\leq \frac{2 \mu_0 (s)R(s)}{\sqrt{t_v-s}}} e^{-\frac{|x_v-w_v|^2}{4}}\left(1+|x_v-w_v|^2\right)\frac{\mu_0 ^{\nu-\frac52}(s)R^{-2-a}(s)}{t_v-s}dw_v.\\
\end{aligned}
\end{equation*}
Observing that for any $\gamma_1\in(0,1)$, we have
$$\int_{\left|w_v\right|\leq \frac{2 \mu_0 (s)R(s)}{\sqrt{t_v-s}}} e^{-\frac{|x_v-w_v|^2}{4}}\left(1+|x_v-w_v|^2\right) dw_v \lesssim \left(\frac{2 \mu_0 (s)R(s)}{\sqrt{t_v-s}}\right)^{\gamma_1}.$$
Thus, one has
\begin{equation*}
\begin{aligned}
J_{11}\lesssim (t_2-t_1) \int_0^{t_1-(t_2-t_1)} \frac{\mu_0 ^{\nu-\frac52+\gamma_1}(s)R^{-2-a+\gamma_1}(s)}{(t_2-s)^{1+\frac{\gamma_1}{2}}}ds.
\end{aligned}
\end{equation*}
Recalling that $R(t)=\mu_0 ^{-\beta}(t)$ for $\beta\in(0,1/2)$, we have the following two cases
\begin{itemize}
\item If $\nu-\frac52+\gamma_1+\beta(2+a-\gamma_1)< 0$, then we have
\begin{equation*}
\begin{aligned}
&\quad \int_0^{t_1-(t_2-t_1)} \frac{\mu_0 ^{\nu-\frac52+\gamma_1}(s)R^{-2-a+\gamma_1}(s)}{(t_2-s)^{1+\frac{\gamma_1}{2}}}ds\\
&\lesssim \mu_0 ^{\nu-\frac52+\gamma_1}(t_1)R^{-2-a+\gamma_1}(t_1)\int_0^{t_1-(t_2-t_1)}\frac{1}{(t_2-s)^{1+\frac{\gamma_1}{2}}}ds\\
&\lesssim \mu_0 ^{\nu-\frac52+\gamma_1}(t_1)R^{-2-a+\gamma_1}(t_1)(t_2-t_1)^{-\gamma_1/2}.
\end{aligned}
\end{equation*}

\item If $\nu-\frac52+\gamma_1+\beta(2+a-\gamma_1)\geq 0$, then we decompose
\begin{equation*}
\begin{aligned}
&\quad\int_0^{t_1-(t_2-t_1)} \frac{\mu_0 ^{\nu-\frac52+\gamma_1}(s)R^{-2-a+\gamma_1}(s)}{(t_2-s)^{1+\frac{\gamma_1}{2}}}ds\\
&= \left(\int_0^{t_1-(T-t_1)}+\int_{t_1-(T-t_1)}^{t_1-(t_2-t_1)}\right) \frac{\mu_0 ^{\nu-\frac52+\gamma_1}(s)R^{-2-a+\gamma_1}(s)}{(t_2-s)^{1+\frac{\gamma_1}{2}}}ds.
\end{aligned}
\end{equation*}
Assuming
$$2k[\nu-\frac52+\gamma_1+\beta(2+a-\gamma_1)]-\frac{\gamma_1}{2}<0,$$
we obtain that
\begin{equation*}
\begin{aligned}
&\quad\int_0^{t_1-(T-t_1)} \frac{\mu_0 ^{\nu-\frac52+\gamma_1}(s)R^{-2-a+\gamma_1}(s)}{(t_2-s)^{1+\frac{\gamma_1}{2}}}ds\\
&\lesssim \int_0^{t_1-(T-t_1)} \frac{\mu_0 ^{\nu-\frac52+\gamma_1}(s)R^{-2-a+\gamma_1}(s)}{(T-s)^{1+\frac{\gamma_1}{2}}}ds\\
&=\int_0^{t_1-(T-t_1)}(T-s)^{2k[\nu-\frac52+\gamma_1+\beta(2+a-\gamma_1)]-1-\frac{\gamma_1}{2}} ds\\
&\lesssim\mu_0 ^{\nu-\frac{5}{2}+\gamma_1}(t_2)R^{-2-a+\gamma_1}(t_2)(t_2-t_1)^{-\gamma_1/2}
\end{aligned}
\end{equation*}
and similarly
\begin{equation*}
\begin{aligned}
\int_{t_1-(T-t_1)}^{t_1-(t_2-t_1)} \frac{\mu_0 ^{\nu-\frac52+\gamma_1}(s)R^{-2-a+\gamma_1}(s)}{(t_2-s)^{1+\frac{\gamma_1}{2}}} ds\lesssim \mu_0 ^{\nu-\frac{5}{2}+\gamma_1}(t_2)R^{-2-a+\gamma_1}(t_2)(t_2-t_1)^{-\gamma_1/2}.
\end{aligned}
\end{equation*}
\end{itemize}
In both cases, we have
$$J_{11}\lesssim \mu_0 ^{\nu-\frac{5}{2}+\gamma_1}(t_2)R^{-2-a+\gamma_1}(t_2)(t_2-t_1)^{1-\gamma_1/2}.$$

For $J_{12}$, we evaluate
\begin{equation*}
\begin{aligned}
&~\int_{t_1-(t_2-t_1)}^{t_1}\int_{\R^3} |G(x-w,t_1-s)|f(w,s)dwds\\
\lesssim&~ \int_{t_1-(t_2-t_1)}^{t_1} \mu_0 ^{\nu-\frac52}(s)R^{-2-a}(s)\int_{|\tilde w|\leq \frac{2 \mu_0 (s)R(s)}{\sqrt{t_1-s}}} e^{-\frac{|\tilde x-\tilde w|^2}{4}}d \tilde w ds\\
\lesssim&~ \int_{t_1-(t_2-t_1)}^{t_1} \mu_0 ^{\nu-\frac52}(s)R^{-2-a}(s)\left(\frac{2\mu_0(s)R(s)}{\sqrt{t_1-s}}\right)^{\gamma_1} ds\\
\lesssim&~ \mu_0 ^{\nu-\frac{5}{2}+\gamma_1}(t_2)R^{-2-a+\gamma_1}(t_2)(t_2-t_1)^{1-\gamma_1/2},
\end{aligned}
\end{equation*}
where $\gamma_1\in(0,1)$. Similarly, we have
$$\int_{t_1-(t_2-t_1)}^{t_1}\int_{\R^3} |G(x-w,t_2-s)|f(w,s)dwds\lesssim \mu_0 ^{\nu-\frac{5}{2}+\gamma_1}(t_2)R^{-2-a+\gamma_1}(t_2)(t_2-t_1)^{1-\gamma_1/2}.$$
Thus we conclude that
$$J_{12}\lesssim \mu_0 ^{\nu-\frac{5}{2}+\gamma_1}(t_2)R^{-2-a+\gamma_1}(t_2)(t_2-t_1)^{1-\gamma_1/2}.$$
Finally, for $J_{13}$,
\begin{equation*}
\begin{aligned}
J_{13}=&~\int_{t_1}^{t_2}\int_{\R^3} G(x-w,t_2-s)f(w,s)dwds\\
\lesssim&~ \int_{t_1}^{t_2} \mu_0 ^{\nu-\frac52}(s)R^{-2-a}(s) \int_{\tilde w\leq \frac{2 \mu_0 (s)R(s)}{\sqrt{t_2-s}}} e^{-\frac{|\tilde x-\tilde w|^2}{4}}d \tilde w ds\\
\lesssim&~\mu_0 ^{\nu-\frac{5}{2}+\gamma_1}(t_2)R^{-2-a+\gamma_1}(t_2)(t_2-t_1)^{1-\gamma_1/2}
\end{aligned}
\end{equation*}
follows from the same argument as before. This completes the proof of \eqref{outerholder-rhs1}.

\end{proof}

\medskip

\begin{proof}[Proof of Lemma \ref{lemma-rhs2}]
We first prove \eqref{outer-rhs2}. Similar to the proof of Lemma \ref{lemma-rhs1}, Duhamel's formula gives
\begin{equation*}
\begin{aligned}
|\psi(x,t)|\lesssim& \int_0^t \frac{\mu^{\nu_2}_0(s)}{(t-s)^{3/2}}\int_{\mu_0 (s)R(s)\leq \left|w\right|} \frac{e^{-\frac{|x-w|^2}{4(t-s)}}}{\left|w\right|^a} dwds\\
\lesssim& \int_0^t \frac{\mu^{\nu_2}_0(s)}{(t-s)^{a/2}}\int_{\frac{\mu_0 (s)R(s)}{\sqrt{t-s}}\leq\left|\tilde w\right|} \frac{e^{-\frac{|\tilde x-\tilde w|^2}{4}}}{\left|\tilde w\right|^a} d\tilde wds,\\
\end{aligned}
\end{equation*}
where $\tilde x=x(t-s)^{-1/2}$. Notice that for $A>0$ we have
$$
\int_{A\leq\left|\tilde w\right|} \frac{e^{-\frac{|\tilde x-\tilde w|^2}{4}}}{\left|\tilde w\right|^a} d\tilde w\lesssim \frac{1}{A^a}~\mbox{ for }~0<a\leq 2.
$$ So
\begin{equation}\label{lemma-rhs21}
\begin{aligned}
&~ \int_0^{t-\mu_0^2(t)R^2(t)} \frac{\mu^{\nu_2}_0(s)}{(t-s)^{a/2}}\int_{\frac{\mu_0 (s)R(s)}{\sqrt{t-s}}\leq\left|\tilde w\right|} \frac{e^{-\frac{|\tilde x-\tilde w|^2}{4}}}{\left|\tilde w\right|^a} d\tilde wds\\
\lesssim&~ \int_0^{t-\mu_0^2(t)R^2(t)} \frac{\mu^{\nu_2}_0(s)}{(t-s)^{a/2}}ds\\
\lesssim&~\mu_0^{\nu_2+\frac{2-a}{4k}}(0)
\end{aligned}
\end{equation}
and
\begin{equation}\label{lemma-rhs22}
\begin{aligned}
&~ \int^t_{t-\mu_0^2(t)R^2(t)} \frac{\mu^{\nu_2}_0(s)}{(t-s)^{a/2}}\int_{\frac{\mu_0 (s)R(s)}{\sqrt{t-s}}\leq\left|\tilde w\right|} \frac{e^{-\frac{|\tilde x-\tilde w|^2}{4}}}{\left|\tilde w\right|^a} d\tilde wds\\
\lesssim&~ \int^t_{t-\mu_0^2(t)R^2(t)} \frac{\mu^{\nu_2}_0(s)}{(\mu_0(s)R(s))^{a}}ds\\
\lesssim&~ \mu_0^{\nu_2+2-a}(0) R^{2-a}(0).
\end{aligned}
\end{equation}
By \eqref{lemma-rhs21}--\eqref{lemma-rhs22}, we conclude the validity of \eqref{outer-rhs2}.

To prove \eqref{outergradient-rhs2}, we have
\begin{equation*}
\begin{aligned}
|\nabla\psi(x,t)|\lesssim& \int_0^t \frac{\mu^{\nu_2}_0(s)}{(t-s)^{5/2}}\int_{\mu_0 (s)R(s)\leq \left|w\right|} \frac{e^{-\frac{|x-w|^2}{4(t-s)}}|x-w|}{\left|w\right|^a} dwds\\
\lesssim& \int_0^t \frac{\mu^{\nu_2}_0(s)}{(t-s)^{(a+1)/2}}\int_{\frac{\mu_0 (s)R(s)}{\sqrt{t-s}}\leq\left|\tilde w\right|} \frac{e^{-\frac{|\tilde x-\tilde w|^2}{4}}|\tilde x-\tilde w|}{\left|\tilde w\right|^a} d\tilde wds\\
\lesssim& \int_0^{t-\mu^2_0(t)R^2(t)} \frac{\mu^{\nu_2}_0(s)}{(t-s)^{(a+1)/2}}ds + \int^t_{t-\mu^2_0(t)R^2(t)} \frac{\mu^{\nu_2}_0(s)}{(t-s)^{(a+1)/2}}\frac{(t-s)^{a/2}}{(\mu_0(s)R(s))^a}ds\\
\lesssim& ~\mu_0^{\nu_2+\frac{1-a_2}{4k}}(0).
\end{aligned}
\end{equation*}
All the rest estimates can be proved similarly.

\end{proof}

\medskip


\bigskip

\noindent
{\bf Acknowledgements:} M. del Pino has been supported by a UK Royal Society
Research Professorship. M. Musso is
partly supported by EPSRC of UK.  The  research  of J.~Wei is partially supported by NSERC of Canada.

\medskip

\end{document}